\documentclass{scrartcl}

\usepackage[english]{babel}
\usepackage[utf8]{inputenc}
\usepackage{mathtools}
\usepackage{mathrsfs}
\usepackage{amsmath}
\usepackage{amssymb}
\usepackage{amsthm}
\usepackage{mathpazo}
\usepackage{graphicx}
\usepackage[caption=false]{subfig}
\usepackage[top=1.25in]{geometry}
\usepackage{xspace}
\usepackage{ifthen}
\usepackage{enumitem}
\setenumerate{label=\textup{(\alph*)}}

\usepackage[hyphens]{url}
\usepackage[hidelinks]{hyperref}
\usepackage{cleveref}

\makeatletter
\DeclareOldFontCommand{\sc}{\normalfont\scshape}{\@nomath\sc}
\makeatother

\newtheorem{definition}{Definition}
\numberwithin{definition}{section}
\newtheorem{theorem}[definition]{Theorem}
\newtheorem{lemma}[definition]{Lemma}

\newtheorem{proposition}[definition]{Proposition}
\newtheorem{assumption}[definition]{Assumption}

\theoremstyle{definition}

\crefname{customlem}{Lemma}{Lemmas}
\Crefname{customlem}{Lemma}{Lemmas}
\crefname{customthm}{Theorem}{Theorems}
\Crefname{customthm}{Theorem}{Theorems}

\Crefrangeformat{enumi}{Assumptions \normalfont{#3#1#4--#5#2#6}}
\crefname{corollary}{Corollary}{Corollaries}
\Crefname{assumption}{Assumption}{Assumptions}
\Crefrangeformat{assumption}{Assumptions \normalfont{#3#1#4--#5#2#6}}
\crefname{theorem}{Theorem}{Theorems}
\crefname{lemma}{Lemma}{Lemmas}
\crefname{proposition}{Proposition}{Propositions}

\numberwithin{equation}{section}
\crefname{enumi}{Assumption}{Assumptions}

\newlist{enumthm}{enumerate}{1}
\setlist[enumthm]{label=\textup{(\alph*)},ref=\theassumption~\textup{(\alph*)}}
\crefalias{enumthmi}{assumption}

\newcommand{\dualpHzeroone}[3][]
{\langle #2, #3 \rangle_{H^{-1}(#1), H_0^1(#1)}}

\newcommand{\norm}[2][2]{\|#2\|_{#1}}
\newcommand{\eqeqnorm}[2][2]{|#2|_{#1}}

\DeclarePairedDelimiterXPP\cnorm[2]{}\lVert\rVert{_{#1}}{#2}
\newcommand{\functionArgument}[1]{\ifthenelse{\equal{#1}{}}  %
	{}
	{({#1})}
}

\newcommand{\youngfun}[1]{\psi\functionArgument{#1}}
\newcommand{\luxemburg}[3][B]{\norm[L_{\youngfun{}}(\Omega; #1)]{#3}}
\newcommand{\dualp}[3][]{\langle #2, #3 \rangle_{#1^*, #1}}
\newcommand{\dualpb}[3][]{\langle #2, #3 \rangle_{#1}}
\newcommand{\inner}[3][]{( #2, #3 )_{#1}}
\DeclarePairedDelimiterX\innerp[3]{(}{)_{#1}}{#2,#3}
\newcommand{\rdc}{\kappa}
\newcommand{\quasiinter}{\mathcal{I}_h}
\newcommand{\pobj}{J}
\newcommand{\rpobj}{\widehat{\pobj}}

\newcommand{\erpobj}{F}
\DeclareSymbolFont{ugrf@m}{U}{eur}{m}{n}
\DeclareMathSymbol{\upkappa}{\mathord}{ugrf@m}{"14}
\newcommand{\rrdc}{\upkappa}

\renewcommand{\natural}{\mathbb{N}}
\newcommand{\csp}{U}
\newcommand{\adcsp}{\csp_\text{ad}}
\newcommand{\ssp}{Y}
\newcommand{\ssph}{\ssp_h}
\newcommand{\csph}{\csp_h}
\newcommand{\hsp}{H}
\newcommand{\lb}{\mathfrak{l}}
\newcommand{\ub}{\mathfrak{u}}
\newcommand{\domain}{D}

\newcommand{\real}{\mathbb{R}}
\newcommand{\sblf}[2][\real]{\mathscr{L}(#2, #1)}
\newcommand{\adcsph}{\csp_{\text{ad}, h}}
\newcommand{\embedding}{\xhookrightarrow{}}

\newcommand{\tfa}{\text{for all}}

\newcommand{\gateaux}{G\^ateaux}
\newcommand{\Caratheodory}{Carath\'eodory}
\newcommand{\friedrichs}{Friedrichs\xspace}
\newcommand{\cea}{C{\'e}a\xspace}
\newcommand{\tand}{\text{and}}

\newcommand{\eu}{\ensuremath{\mathrm{e}}}
\newcommand{\du}{\ensuremath{\mathrm{d}}}

\newcommand{\cF}{\mathcal{F}}

\DeclarePairedDelimiterXPP\cE[1]{\mathbb{E}}[]{}{%
	
	#1}
\DeclarePairedDelimiterXPP\Prob[1]{\mathrm{Prob}}(){}{
	
	#1}

\newcommand{\wlb}{a}
\newcommand{\wub}{b}

\newcommand{\prox}[3][]{\mathrm{prox}_{#2}\functionArgument{#3}}

\newcommand{\uhN}{u_{h, N}^*}
\newcommand{\rv}{Z}
\newcommand{\erpobjhN}{\hat{\erpobj}_{h,N}}
\newcommand{\spL}[2]{\mathscr{L}(#1, #2)}

\newcounter{param}
\newcommand{\newconstant}{%
	\refstepcounter{param}%
	\ensuremath{C_\theparam}}
\newcommand{\oldconstant}[1]{\ensuremath{C_{\ref{#1}}}}

\DeclareMathOperator*{\argmin}{arg\,min}

\newcommand{\keywords}[1]
{
	{\small	
		\textbf{Key words.} {#1}
		\\
	}
}

\newcommand{\amssubject}[1]
{
	{\small
		\textbf{AMS subject classifications.} {#1}
	}
}

\renewcommand{\abstract}[1]
{
	{\small
		\textbf{Abstract.} {#1}
		\\
	}
}

\title{Reliable Error Estimates for Optimal Control of 
	Linear Elliptic PDEs with Random Inputs}
\author{Johannes Milz\thanks{H.\ Milton Stewart School of Industrial and Systems Engineering, Georgia Institute of Technology, Atlanta, Georgia 30332, 
		USA  (\texttt{johannes.milz@isye.gatech.edu}). The
project was partially supported by the Deutsche Forschungsgemeinschaft 
(DFG, German Research Foundation)---project 188264188/GRK1754.}}
\date{April 16, 2023}

\begin{document}

\maketitle

\abstract{%
	We discretize a risk-neutral optimal control problem
	governed by a linear elliptic partial differential equation
	with random inputs using a Monte Carlo sample-based approximation and 
	a finite element
	discretization, yielding finite dimensional
	control problems. We establish an exponential tail bound
	for the distance between the finite dimensional problems'
	solutions and the risk-neutral problem's solution.
	The tail bound implies
	that solutions to the risk-neutral optimal control problem
	can be reliably estimated with the solutions to the 
	finite dimensional control
	problems. Numerical simulations illustrate our theoretical findings.
}
\par 
\keywords{%
	PDE-constrained optimization, 
	sample average approximation, 
	Monte Carlo sampling,
	uncertainty quantification, 
	stochastic programming, 
	finite element discretization, 
	error estimates, sparse controls, large deviations
}
\par 
\amssubject{%
	90C15, 90C60,   35Q93, 35R60, 49M25,  49N10, 65M60, 65C05
}

\section{Introduction}
\label{sec:intro}

Many applications in science and engineering require the solution
of optimization problems constrained by physics-based
models, such as those modeled
using partial differential equations (PDEs).
Optimization problems with PDE constraints
include, for instance, tidal-stream array optimization  \cite{Goss2021},
optimal design of permanent magnet synchronous machines \cite{Alla2019}, and
topology design of cantilever structures \cite{Xia2021}.
When parameters in PDEs are uncertain, we obtain a 
parameter-dependent objective function. The  solutions
to the parameterized PDE-constrained optimization may depend significantly on 
the particular parameter choices. In such cases, 
we may be interested in obtaining a decision resilient to uncertainty. 
We model the parameter vector as random vector
and seek a  decision performing
best on average. The task is formulated as a risk-neutral 
PDE-constrained optimization
problem. Its objective function is defined as the expected value
of the parameter-dependent objective function. Risk-neutral PDE-constrained
optimization problems are challenging both from a theoretical 
and practical perspective. Their feasible sets 
are often infinite dimensional, high-dimensional random vectors
result in high-dimensional integrals, and PDEs commonly lack 
closed-form solutions. For numerical simulations, 
risk-neutral control problems 
must generally be discretized and expectations be approximated.  
We combine a Monte Carlo sample-based approximation of expectations
with finite dimensional approximations of the decision 
and PDE solution spaces 
to obtain a finite dimensional optimization problem, ready to be
solved using state-of-the-art solution methods for PDE-constrained optimization.
We derive error bounds on the distance between 
solutions to the finite dimensional problem
and that to the risk-neutral problem, which are 
valid with high probability. 
When formulating the risk-neutral optimal control problem,
we allow for a particular nonsmooth control regularization, as it appears 
in control placement applications such as the design of tidal-stream
renewable energy farms \cite{Funke2016}.

We consider the  optimal control problem
governed by an  elliptic PDE
\begin{align}
	\label{eq:saa:lqcp}
	\min_{u \in \adcsp} \, 
	(1/2)\cE{\norm[L^2(\domain)]{S(u, \xi)-y_d}^2} 
	+ (\alpha/2)\norm[L^2(\domain)]{u}^2
	+ \gamma \norm[L^1(\domain)]{u},
\end{align}
where $y_d \in L^2(\domain)$, $\domain \subset \real^d$
with $d \in \{1,2,3\}$
is a convex,
polygonal, bounded domain,  $\alpha > 0$ and $\gamma \geq 0$
are regularization parameters, and for
each $(u,\xi) \in L^2(\domain) \times \Xi$, 
$y_{\xi} = S(u, \xi) \in H_0^1(\domain)$ solves the
parameterized elliptic
\begin{align}
	\label{eq:S}
	\int_{\domain}\rdc(\xi)
	\nabla y_{\xi} \cdot \nabla v \, \du x = 
	\int_{\domain} uv \, \du x
	\quad \tfa \quad v \in H_0^1(\domain).
\end{align}
Here, $\xi$ is a random element mapping from a probability space
to a complete, separable metric space $\Xi$. The space $\Xi$ is equipped
with its Borel sigma-field. For the numerical simulations presented
in \cref{sec:simulations}, $\Xi$ is a closed subset of $\real^p$. 
The space $L^2(\domain)$ is the space of square integrable functions
defined on $\domain$ and $L^1(\domain)$ is that of integrable functions. 
Moreover, $H_0^1(\domain)$ is the Sobolev
space of $L^2(\domain)$-functions with
square integrable weak derivatives and zero boundary traces. 
We require the random diffusion coefficient 
$\rdc : \Xi \to C^1(\bar{\domain})$
be essentially bounded
and assume that there exist  $\rdc_{\min} > 0$
such that  $\rdc_{\min} \leq \rdc(\xi)(x)$
for all $(\xi,x) \in \Xi \times \bar{\domain}$.
The terms $\kappa(\xi)$, $\nabla y_{\xi}$, $\nabla v$, $v$, and $u$ in  \eqref{eq:S} are evaluated at $x \in \domain$, but we omit writing these
evaluations. Throughout the text, we  omit similar evaluations 
in other equations as well.
The inner product of a real Hilbert
space $\hsp$ is denoted by $\inner[\hsp]{\cdot}{\cdot}$
and its norm by $\norm[\hsp]{\cdot} = \inner[\hsp]{\cdot}{\cdot}^{1/2}$.
The feasible set $\adcsp$ is defined by
\begin{align}
	\label{eq:adcsp}
	\adcsp = \{\, u \in L^2(\domain): \, \lb \leq u \leq \ub
	\, \, \text{a.e. in} \,\, \domain  \, \},
	\quad \text{where} \quad 
	\lb \leq 0 \leq  \ub \quad  \text{and} \quad \lb,\, \ub \in \real.
\end{align}

We discretize the control problem \eqref{eq:saa:lqcp}
using the sample average approximation (SAA) \cite{Kleywegt2002,Shapiro2021}
and a finite element discretization \cite{Hinze2009,Troeltzsch2010a}.
To approximate the expected value in \eqref{eq:saa:lqcp}, 
let $\xi^1$, $\xi^2$, $\ldots$ be 
defined on 
a complete probability space $(\Omega, \cF, P)$
and be 
independent identically distributed
$\Xi$-valued random elements,  
each having the same distribution as 
that of $\xi$. 
For a discretization parameter $h \in (0,1)$, 
let $\ssph$ and $\csph$ be finite dimensional subspaces, such
as finite element spaces, of
the state space $H_0^1(\domain)$ and the control
space $L^2(\domain)$, respectively.

Now, we can formulate the discretized SAA problem
\begin{align}
	\label{eq:saa:femsaa}
	\min_{u_h\in\adcsph}\, 
	\frac{1}{2N}
	\sum_{i=1}^N\norm[L^2(\domain)]{S_h(u_h, \xi^i)-y_d}^2
	+(\alpha/2)\norm[L^2(\domain)]{u_h}^2
	+ \gamma \norm[L^1(\domain)]{u_h},
\end{align}
where  $N \in \natural$ is the sample size, 
$\adcsph = \adcsp \cap \csph$ and
for each $(u,\xi) \in L^2(\domain) \times \Xi$, 
$y_{{\xi,h}} = S_h(u, \xi) \in \ssph$ solves the
discretized PDE
\begin{align}
	\label{eq:Sh}
	\int_{\domain}\rdc(\xi)\nabla y_{\xi,h} \cdot \nabla v_h  \, \du x = 
	\int_{\domain} uv_h  \, \du x
	\quad \tfa \quad v_h \in \ssph.
\end{align}
Whereas the optimal control problem \eqref{eq:saa:lqcp} is infinite dimensional
and its objective function involves a potentially high-dimensional integral,
the discretized SAA problem \eqref{eq:saa:femsaa} is a finite dimensional
optimization problem  with the expectation
approximated by the sample average. 
The discretized SAA problem can be efficiently solved using
semismooth Newton methods \cite{Mannel2020,Stadler2009,Ulbrich2011}.

We analyze the accuracy and reliability
of solutions to the discretized SAA problem \eqref{eq:saa:femsaa}
as approximate solutions to the infinite dimensional PDE-constrained
problem \eqref{eq:saa:lqcp}.
The manuscript's main novelty is the derivation 
of an error estimate allowing us to conclude that
(depending on the discretization 
parameter $h$ and sample size $N$), the finite dimensional
sample-based solutions are close to the 
infinite dimensional problem's solution with 
extremely high probability.
Let $u^*$ be the solution to \eqref{eq:saa:lqcp} and
$\uhN$ be that to \eqref{eq:saa:femsaa}.
We establish the  exponential tail bound, our main contribution, 
\begin{align}
	\label{eq:2021-02-22T15:31:07.382}
	\Prob{\norm[L^2(\domain)]{\uhN-u^*} \geq c_1 h + c_2 \varepsilon} \leq 
	2\exp(-\varepsilon^2N/2) 
	\quad \tfa \quad \varepsilon > 0,
\end{align}
where $c_1$, $c_2 \in (0, \infty)$ are deterministic, 
problem-dependent parameters, 
which we explicitly derive within our error estimation.
The constants $c_1$ and $c_2$ depend, for example, on the
regularization parameter $\alpha > 0$ and deterministic characteristics of 
the random diffusion coefficient.
The exponential tail bound \eqref{eq:2021-02-22T15:31:07.382} 
implies that for each $\delta \in (0,1)$, 
with a probability of at least $1-\delta$,
\begin{align}
	\label{eq:2021-02-22T16:35:32.14}
	\norm[L^2(\domain)]{\uhN-u^*} 
	< c_1 h +c_2 \sqrt{\frac{2\ln(2/\delta)}{N}}.
\end{align}
This estimate essentially expresses the fact
that the random vector $\uhN$ is ``close''
to the solution $u^*$ to \eqref{eq:saa:lqcp}
with high probability, provided that $0 < \delta \ll 1$.
Since the inverse of $\delta \in (0,1)$ only appears logarithmically in 
\eqref{eq:2021-02-22T16:35:32.14}, we can choose a small 
``error probability'' $\delta$, 
say $\delta = 10^{-12}$. Therefore, we may conclude that
the  solution to the infinite dimensional
\eqref{eq:saa:lqcp} can be reliably  estimated via solutions to 
the discretized SAA problem
\eqref{eq:saa:femsaa}. We refer to the exponential tail bound
\eqref{eq:2021-02-22T15:31:07.382} as reliable error estimate.

Establishing an exponential
tail bound, such as \eqref{eq:2021-02-22T15:31:07.382}, 
is one approach to analyzing the accuracy of the discretized
SAA problem's solutions. Another approach
is to establish an expectation bound, 
providing an estimate on the expected distance between the 
SAA solutions and the true one. 
Expectation bounds can  be combined
with Tschebyshev's inequality to obtain tail bounds.
For example, suppose that there exists positive constants
$\tilde c_1$ and $\tilde c_2$ 
such that for all $h \in (0,1)$ and $N \in \natural$,
we have the expectation bound
\begin{align}
	\label{eq:exp_bound}
	\cE{\norm[L^2(\domain)]{\uhN-u^*}^2}
	\leq 
	\tilde c_1 h^2 + \tilde c_2/N.
\end{align}
Combining \eqref{eq:exp_bound} 
with Tschebyshev's inequality, we obtain the polynomial tail bound
\begin{align}
	\label{eq:tailprob}
	\Prob{\norm[L^2(\domain)]{\uhN-u^*} \geq \varepsilon} \leq 
	\varepsilon^{-2} \big(\tilde c_1 h^2 + \tilde c_2/N \big)
	\quad \tfa \quad \varepsilon > 0.
\end{align}
Comparing this tail bound with that in 
\eqref{eq:2021-02-22T15:31:07.382}, we obtain a nonexponential
decay of the tail probability in \eqref{eq:tailprob} 
as a function of the sample size $N$. Moreover, 
the discretization parameter $h$ appears on the right-hand side in 
\eqref{eq:tailprob}. 
As a result, the expectation bound \eqref{eq:exp_bound} 
may not imply an exponential tail bound
such as that in \eqref{eq:2021-02-22T15:31:07.382}. 
On the other hand, the tail bound \eqref{eq:2021-02-22T15:31:07.382}  implies
bounds on all finite moments of $\norm[L^2(\domain)]{\uhN-u^*}$ (see
\eqref{eq:new_expecatation_bound}).
In particular, the exponential tail bound \eqref{eq:2021-02-22T15:31:07.382} 
implies the expectation bound 
\begin{align}
	\label{eq:expecatation_bound}
	\cE{\norm[L^2(\domain)]{\uhN-u^*}}
	\leq 
	c_1 h + c_2\sqrt{2\pi}/\sqrt{N};
\end{align}
see \cref{sec:expectation_bound}. 
Whereas \eqref{eq:2021-02-22T16:35:32.14} expresses the fact
that $\uhN$ is ``close'' to $u^*$ with a probability of at least
$1-\delta$,  the expectation bounds in
\eqref{eq:exp_bound} and \eqref{eq:expecatation_bound}
imply that 
$\uhN$ is ``close'' to $u^*$ on average. 
The validity of the expectation bound 
\eqref{eq:exp_bound} may require fewer assumptions on the
random diffusion coefficient than those needed to derive
\eqref{eq:2021-02-22T15:31:07.382}. 
In particular, while we require
$\kappa \colon \Xi \to C^1(\bar{\domain})$ be essentially bounded
and the existence of a constant 
$\kappa_{\min} > 0$ such that $\kappa_{\min} \leq \kappa(\xi)(x)$
for all $(\xi, x) \in \Xi \times \bar{\domain}$,
the expectation bound \eqref{eq:exp_bound} may be established
for more general random diffusion coefficients, such as lognormal
random diffusion coefficients.
Moreover,  the constants $\tilde c_1$ and $\tilde c_2$ 
in \eqref{eq:exp_bound} may be smaller than $c_1$ and $c_2$ in 
\eqref{eq:2021-02-22T15:31:07.382} even if the exponential tail bound
\eqref{eq:2021-02-22T15:31:07.382} holds true.
We further comment on our choice of random diffusion coefficients
in \cref{subsec:exponentialtailbound}.

The SAA problem \eqref{eq:saa:femsaa} is a canonical approximation
of the control problem \eqref{eq:saa:lqcp} and is obtained via
two types of approximations:  finite dimensional discretizations
of the control and state spaces, and random approximations of 
expectations using sample averages. To analyze these approximations,
we make use of error analyses developed in the literature: our
derivation of the exponential tail bound 
\eqref{eq:2021-02-22T15:31:07.382} 
is inspired by the error analyses for deterministic control problems 
developed in \cite{Arnautu1998,Casas2012a,Meidner2008,Wachsmuth2011}, 
makes use of stability and error estimates derived in
\cite{Ali2017,Charrier2012,Martin2021,Martin2021a},
and uses an exponential tail bound (a large deviation-type bound) for 
$L^2(\domain)$-valued random vectors established in 
\cite{Pinelis1992,Pinelis1994}.
Large deviation-type results and tools are well-established in the
literature on stochastic programming 
\cite{Ghadimi2012,Nemirovski2009,Tong2022}
and  on uncertainty quantification
with differential equations  \cite{Dematteis2019,Tong2020}.

A comprehensive analysis of the SAA approach with a focus
on finite dimensional problems is provided in
\cite[Chap.\ 5]{Shapiro2021}. The current text is partly based on
results established in the author's dissertation \cite{Milz2021a}.
The SAA method is analyzed in \cite{Roemisch2021,Milz2021}
as applied to infinite dimensional 
risk-neutral optimal control of elliptic PDEs with random inputs, but
without considering state and control discretizations. 
Expectation bounds for  unconstrained risk-neutral elliptic control problems 
with and without state and control discretization 
and $\gamma = 0$ are derived in \cite{Martin2021} (see also \cite{Martin2021a}).
Besides deriving qualitative and quantitative stability for 
infinite dimensional risk-neutral optimization problems, 
Monte Carlo approximations and finite dimensional discretizations of 
risk-neutral elliptic control problems with $\gamma = 0$ are studied in
\cite{Hoffhues2020} using probability metrics.
Our error estimation approach differs from that in 
\cite[sect.\ 6]{Hoffhues2020} in that, for example, we estimate
$\norm[L^2(\domain)]{\uhN-u^*}$ without using the error
decomposition
$\norm[L^2(\domain)]{\uhN-u^*} \leq \norm[L^2(\domain)]{\uhN-u_N^*}
+ \norm[L^2(\domain)]{u_N^*-u^*}
$,
where $u_N^*$ is the solution to an infinite dimensional SAA problem.
Monte Carlo sampling provides one approach to approximating
the expected value in risk-neutral PDE-constrained optimization problems. 
Alternative approximation approaches are, for example,
quasi-Monte Carlo sampling \cite{Guth2021,Guth2019},
stochastic collocation and sparse grids 
\cite{Kouri2014a,Tiesler2012}, and tensor-based methods
\cite{Garreis2017,Garreis2019a}.
Risk-averse control of elliptic equations 
with uncertain fractional exponents is considered in \cite{Antil2021} 
and error estimates for
sample-based and finite element approximations are derived.
Solution methods for PDE-constrained 
optimization under uncertainty include, for example, stochastic 
gradient methods \cite{Geiersbach2019a,Martin2021,Martin2021a}
and inexact trust-region methods \cite{Garreis2019a,Kouri2014}.

The rest of the manuscript is organized as follows.
We provide further notation in 
\cref{sec:notation}
and formulate assumptions on the control problem \eqref{eq:saa:lqcp} 
in 
\cref{sec:assumptions}. In \cref{sec:assumptions} we also introduce
the state and control discretization of \eqref{eq:saa:lqcp} as a set of
two assumptions, which allows us to avoid to formally
define finite element spaces. 
The reliable error estimate is stated in
\cref{sec:error} and established in 
\cref{sec:2021-02-22T19:08:49.94}. 
We recast the PDEs in \eqref{eq:S}
and \eqref{eq:Sh} as linear systems in \cref{sec:2021-02-22T19:08:49.94}.
To establish the error estimate, we demonstrate that the solution to
\eqref{eq:saa:lqcp}  has square integrable weak derivatives, allowing
us to approximate it using a quasi-interpolation operator.
We present numerical illustrations in \cref{sec:simulations}.
The manuscript is concluded with \cref{sec:discussion}.

\section{Preliminaries and further notation}
\label{sec:notation}

If not specified otherwise, relations between random elements
are supposed to hold with probability one. Metric spaces are 
equipped with their Borel sigma-field. 
Let $\Lambda$ be a real Banach space. 
The norm of $\Lambda$ is denoted by
$\norm[\Lambda]{\cdot}$. Let
$\Lambda_1$ and $\Lambda_2$ be real Banach spaces. The space of linear, bounded
operators mapping from $\Lambda_1$ to $\Lambda_2$ is denoted by
$\sblf[\Lambda_2]{\Lambda_1}$. 
We define   $\Lambda^* = \sblf[\real]{\Lambda}$. The dual pairing
between $\Lambda^*$ and $\Lambda$ is denoted by 
$\dualp[\Lambda]{\cdot}{\cdot}$.
If $\Lambda$ is a real, reflexive Banach space, then
we identify $(\Lambda^*)^*$ with $\Lambda$
and write $(\Lambda^*)^* = \Lambda$.
The adjoint operator
of $\Upsilon \in \sblf[\Lambda_2]{\Lambda_1}$ is denoted by
$\Upsilon^* \in \sblf[\Lambda_1^*]{\Lambda_2^*}$.
Let $(\Theta, \mathcal{A}, \mu)$
be a probability space.  
An operator-valued mapping
$\Upsilon : \Theta \to \sblf[\Lambda_2]{\Lambda_1}$ is called uniformly
measurable if there exist a sequence of simple mappings
$\Upsilon_k : \Theta \to   \sblf[\Lambda_2]{\Lambda_1}$ with
$\Upsilon_k(\theta) \to \Upsilon(\theta)$ in $\sblf[\Lambda_2]{\Lambda_1}$ as 
$k \to \infty$ for all $\theta \in \Theta$.
Let $\domain \subset \real^d$ be a bounded domain. 
We define the Sobolev spaces $H^p(\domain)$
with $p \in \{1,2\}$ as the spaces of $L^2(\domain)$-functions with
square integrable weak derivatives up to order $p$
and $L^\infty(\domain)$ as the space of essentially bounded functions
defined on $\domain$. Furthermore, we define the
seminorm $\eqeqnorm[H^1(\domain)]{y} = \norm[L^2(\domain)^d]{\nabla y}$
for $y \in H^1(\domain)$, where $\nabla y$ is the weak gradient of $y$.
The norm of $H^1(\domain)$ is given by
$\norm[H^1(\domain)]{y} = 
(\norm[L^2(\domain)]{y}^2 + \eqeqnorm[H^1(\domain)]{y}^2)^{1/2}$.
We define $H^{-1}(\domain) = H_0^1(\domain)^*$. We identify
$L^2(\domain)$ with its dual and write $L^2(\domain) = L^2(\domain)^*$.
\friedrichs' constant $C_\domain > 0$ of  
the domain $\domain \subset \real^d$ is defined by
$C_\domain = \sup_{v\in H_0^1(\domain)\setminus\{0\}}\, 
\norm[L^2(\domain)]{v}/\eqeqnorm[H^1(\domain)]{v}$.
Since $\domain$ is bounded, $C_\domain < \infty$ \cite[Thm.\ 1.13]{Hinze2009}.
Let $\hsp$ be a real Hilbert space.
For a convex, lower semicontinuous, 
proper function $\chi : \hsp \to (-\infty,\infty]$, 
the proximity operator 
$\prox{\chi}{}:\hsp \to \hsp$ of $\chi$ is defined by
\begin{align}
	\label{eq:prox}
	\prox{\chi}{v}
	= \argmin_{w\in \hsp}\, 
	\chi(w) + (1/2)\norm[\hsp]{v-w}^2;
\end{align}
see \cite[Def.\ 12.23]{Bauschke2011}. 
We define the indicator function $I_{\hsp_0} : \hsp  \to [0,\infty]$ of 
$\hsp_0 \subset \hsp$ 
by $I_{\hsp_0}(v) = 0$ if $v \in \hsp_0$ and $I_{\hsp_0}(v) = \infty$
otherwise. 

\section{Assumptions on the optimal control problem}
\label{sec:assumptions}

We state assumptions on the domain, the random diffusion coefficient, 
and the state and control space discretization. 

\subsection{Domain and random diffusion coefficient}
\label{subsec:domainkappa}
We impose conditions on the domain $\domain$
and the random diffusion coefficient $\rdc$.

\begin{assumption}
	\label{ass:domainkappa}
	\begin{enumthm}[wide,nosep,leftmargin=*]
		\item 
		\label{ass:domainkappa_1}
		The domain $\domain \subset \real^d$
		with $d \in \{1,2, 3\}$ is bounded, 	convex and polyhedral. 
		\item 	
		\label{ass:domainkappa_2}
		The random diffusion coefficient
		$\rdc : \Xi \to  C^1(\bar{\domain})$
		is (strongly) measurable
		and there exists $\rdc_{\min}$, $\rdc_{\max} \in (0,\infty)$
		with $\rdc_{\min} \leq \rdc(\xi)(x) \leq \rdc_{\max}$
		for all $(\xi,x) \in \Xi \times \bar{\domain}$
		and $\rdc_{\max,1} \in (0,\infty)$ with
		$\norm[C^1(\bar{\domain})]{\rdc(\xi)} \leq \rdc_{\max,1}$
		for all $\xi \in \Xi$.
	\end{enumthm}
\end{assumption}

\Cref{ass:domainkappa} allows us
to establish higher regularity of the PDE solution
using results established in \cite{Ali2017}. 
While it may be possible to derive reliable error estimates 
in case $\rdc : \Xi \to  C^t(\bar{\domain})$
for some $t \in (0,1]$, we assume that 
\Cref{ass:domainkappa_2} holds true.
\Cref{ass:domainkappa_2}
is violated if $\kappa$ is a log-normal 
random diffusion coefficient
\cite{Charrier2012}. 

\subsection{Discretization of state and control space}
\label{subsec:saa:statecontroldiscretization}
We introduce the discretization for
the state space $H_0^1(\domain)$ 
and for the control space 
$L^2(\domain)$.

\begin{assumption}
	\label{ass:2020-11-08T15:31:39.527}
	For each $h \in (0,1)$,
	$\ssph$ is a finite dimensional subspace  of $H_0^1(\domain)$.
	For a constant $C_{\ssp} > 0$
	independent of $h \in (0,1)$, 
	\begin{align}
		\inf_{v_h \in \ssp_h} \, \eqeqnorm[H^1(\domain)]{v-v_h}
		\leq C_{\ssp}h\norm[H^2(\domain)]{v} 
		\;\; \tfa \;\; v \in H_0^1(\domain) \cap H^2(\domain)
		\;\; \tand  
		\;\; h \in (0,1).
	\end{align} 
\end{assumption}

Let \Cref{ass:2020-11-08T15:31:39.527} hold.
Since $\ssph$ is a finite dimensional subspace of $H_0^1(\domain)$,
$\ssph$ is closed \cite[Thm.\ 3.2-4]{Kreyszig1978}.
Hence $\ssph$ is a Hilbert space. 
\Cref{ass:2020-11-08T15:31:39.527} is satisfied
if \Cref{ass:domainkappa_1} holds true 
and $\ssph$ is the  space of 
piecewise linear finite elements defined on 
a certain regular meshes of $\bar{\domain}$
with zero boundary conditions (cf.\ \cite[Lem.\ 4.3]{Ali2017}).
The following assumption is based on 
\cite[Assumption~4.2]{Wachsmuth2011} and
\cite[Assumption~3.3]{Reyes2008}.
\begin{assumption}
	\label{ass:csph}
	For each $h \in (0,1)$, there exists 
	$n_h \in \natural$
	and $\phi_h^j \in L^\infty(\domain)$ 
	with  $\phi_h^j \geq 0$ a.e.\ in $\domain$,
	$\norm[L^\infty(\domain)]{\phi_{h}^j} = 1$
	for $j = 1, \ldots, n_h$, and
	$
	\sum_{j=1}^{n_h} \phi_h^j = 1
	$
	a.e.\ in $\domain$.
\end{assumption}

Let \Cref{ass:domainkappa_1,ass:csph}  hold.
For each $h \in (0,1)$, 
we define $\csph$ as the linear span
of  $\{\phi_h^j: j=1, \ldots, n_h\}$.
Since $\phi_h^j \in L^\infty(\domain)$
and $\domain$ is bounded, 
we have $\phi_h^j \in L^2(\domain)$.
Hence $\csph $ is a subspace of
$L^2(\domain)$ \cite[p.\ 56]{Kreyszig1978}.
Since $\csph$ is finite dimensional, it is
complete \cite[Thm.\ 3.2-4]{Kreyszig1978}.

When \Cref{ass:domainkappa_1} 
is fulfilled, $d \in \{2,3\}$, and $\csph$
is given by either piecewise constant  
or piecewise linear finite elements defined on regular meshes 
of $\bar{\domain}$ defined by
triangles if $d=2$ and tetrahedra if $d=3$, 
then \Cref{ass:csph} is fulfilled
\cite[Rem.\ 3.1]{Reyes2008}.
Moreover, if \Cref{ass:domainkappa_1}  holds, 
$d=1$, and $\csph$ is given by piecewise constant finite elements defined on 
intervals, then \Cref{ass:csph} holds true.

Let \Cref{ass:csph} be satisfied and let $h \in (0,1)$.
Following \cite[Def.\ 2.2]{Carstensen1999}, 
\cite[eqns.\ (10) and (11)]{Reyes2008}
and \cite[p.\ 868]{Wachsmuth2011}, 
let us define 
$\pi_h^j : L^1(\domain) \to \real$
and the quasi-interpolation operator 
$\quasiinter : L^1(\domain) \to \csph$ by 
\begin{align}
	\label{eq:saa:quasiinter}
	\pi_h^j[u] = 
	\innerp{L^2(\domain)}{\phi_{h}^j}{u}/\innerp{L^2(\domain)}{\phi_{h}^j}{1}
	\quad\quad \tand 
	\quad\quad  
	\quasiinter u = \sum_{j=1}^{n_h} \pi_h^j[u] \phi_{h}^j.
\end{align}

\Cref{ass:csph} and H\"older's inequality 
imply that $\pi_h^j$ and $\quasiinter$ and well-defined.

\begin{assumption}
	\label{ass:2020-01-31T16:12:16.968}
	For a constant $C_{\csp} > 0$
	independent of $h \in (0,1)$, we have
	for all $h \in (0,1)$ and every $u \in H^1(\domain)$, 
	\begin{align}
		\label{eq:saa:ReyesMeyerVexlerLem43}
		\norm[L^2(\domain)]{u - \quasiinter u} \leq C_{\csp} h
		\eqeqnorm[H^1(\domain)]{u} 
		\quad \tand \quad 
		\norm[H^1(\domain)^*]{u - \quasiinter u} \leq C_{\csp} h^2
		\norm[H^1(\domain)]{u}.
	\end{align}
\end{assumption}

According to \cite[Lems.\ 4.3 and 4.4]{Reyes2008},
\Cref{ass:2020-01-31T16:12:16.968} is fulfilled
if \cite[Assumption 3.3]{Reyes2008}
is satisfied. If  
\Cref{ass:domainkappa_1}
holds and $d \in \{2,3\}$, then  \cite[Rem.\ 3.1]{Reyes2008} ensures that
\cite[Assumption 3.3]{Reyes2008} is fulfilled
if $\csph$ is defined by piecewise constant 
finite elements defined on regular meshes 
of $\bar{\domain}$ defined by 
triangles if $d=2$ and tetrahedra if $d=3$, for example.
We summarize properties of the quasi-interpolation operator $\quasiinter$
which we use to establish the reliable error estimate.

\begin{lemma}
	\label{lem:saa:collectionquasiinter}
	If \Cref{ass:domainkappa_1,ass:csph}	 
	hold and $h \in (0,1)$, then 
	\emph{(a)}
	$
	\norm[L^1(\domain)]{\quasiinter u} \leq 
	\norm[L^1(\domain)]{u} 
	$
	for  each $u \in L^1(\domain)$,
	and
	\emph{(b)}
	$\quasiinter  u \in \adcsph$
	for all $u \in \adcsp$.
\end{lemma}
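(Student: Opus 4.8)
The plan is to exploit the two defining features of the weights $\phi_h^j$ from \Cref{ass:csph} that make $\quasiinter$ a local averaging operator: the nonnegativity $\phi_h^j \geq 0$ and the partition-of-unity identity $\sum_{j=1}^{n_h} \phi_h^j = 1$ almost everywhere. Both parts reduce to these facts together with the observation that each coefficient $\pi_h^j[u] = \innerp{L^2(\domain)}{\phi_h^j}{u}/\innerp{L^2(\domain)}{\phi_h^j}{1}$ is a weighted average of $u$ against $\phi_h^j$, the positivity $\innerp{L^2(\domain)}{\phi_h^j}{1} > 0$ guaranteeing this average is well defined.

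For part (a), I would bound $\quasiinter u$ pointwise. The triangle inequality and $\phi_h^j \geq 0$ give $|\quasiinter u(x)| \leq \sum_{j=1}^{n_h} |\pi_h^j[u]|\, \phi_h^j(x)$ for almost every $x$. Integrating over $\domain$ and using $\int_\domain \phi_h^j = \innerp{L^2(\domain)}{\phi_h^j}{1}$, the normalizing denominators cancel, leaving $\norm[L^1(\domain)]{\quasiinter u} \leq \sum_{j=1}^{n_h} |\innerp{L^2(\domain)}{\phi_h^j}{u}|$. A second application of $\phi_h^j \geq 0$ yields $|\innerp{L^2(\domain)}{\phi_h^j}{u}| \leq \innerp{L^2(\domain)}{\phi_h^j}{|u|}$, and the partition of unity then collapses $\sum_{j} \innerp{L^2(\domain)}{\phi_h^j}{|u|}$ to $\norm[L^1(\domain)]{u}$.

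For part (b), the membership $\quasiinter u \in \csph$ is immediate, since $\quasiinter u$ is a linear combination of the $\phi_h^j$ that span $\csph$; it remains to verify $\quasiinter u \in \adcsp$, i.e.\ $\lb \leq \quasiinter u \leq \ub$ almost everywhere. I would argue in two stages. First, if $\lb \leq u \leq \ub$, then, $\pi_h^j[u]$ being an average of $u$ against the nonnegative weight $\phi_h^j$, the bounds $\lb \leq \pi_h^j[u] \leq \ub$ hold for each $j$. Second, for almost every $x$ the value $\quasiinter u(x) = \sum_{j} \pi_h^j[u]\,\phi_h^j(x)$ is a convex combination of the numbers $\pi_h^j[u]$, because the weights $\phi_h^j(x) \geq 0$ sum to one; hence $\lb \leq \quasiinter u(x) \leq \ub$.

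Neither part poses a genuine obstacle: both are bookkeeping exercises built on nonnegativity and the partition of unity, with the finiteness of the sum making all term-by-term manipulations routine. The only points deserving care are to invoke $\innerp{L^2(\domain)}{\phi_h^j}{1} > 0$ so that the coefficients $\pi_h^j[u]$ are well defined, and to keep every pointwise inequality qualified by ``almost everywhere,'' since the partition-of-unity identity holds only up to a null set. Notably, the sign condition $\lb \leq 0 \leq \ub$ in the definition of $\adcsp$ is not used in the estimates themselves---it merely ensures that $\adcsp$ is nonempty---so the argument goes through for any order interval $[\lb,\ub]$.
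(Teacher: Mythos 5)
Your proof is correct, and part (b) is essentially the paper's own argument: $\lb \leq \pi_h^j[u] \leq \ub$ because each coefficient is a $\phi_h^j$-weighted average of $u$, and the partition of unity makes $\quasiinter u(x)$ a convex combination of these coefficients almost everywhere. For part (a), however, you take a genuinely different and more elementary route. The paper first records the identity \eqref{eq:2020-11-26T13:35:46.087}, splits $u = u_+ + u_-$ into its positive and negative parts, and shows that $\quasiinter$ acts as an exact $L^1(\domain)$-isometry on each signed part, $\norm[L^1(\domain)]{\quasiinter u_{\pm}} = \norm[L^1(\domain)]{u_{\pm}}$ with $\quasiinter u_+ \geq 0$, before invoking linearity and the triangle inequality. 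You instead estimate pointwise, $|\quasiinter u(x)| \leq \sum_{j} |\pi_h^j[u]|\,\phi_h^j(x)$, integrate, cancel the normalizing denominators against $\innerp{L^2(\domain)}{\phi_h^j}{1}$, bound $|\innerp{L^2(\domain)}{\phi_h^j}{u}| \leq \innerp{L^2(\domain)}{\phi_h^j}{|u|}$, and collapse the sum by the partition of unity. Your argument is shorter and avoids the decomposition; what the paper's version buys is the stronger intermediate fact that $\quasiinter$ is positivity-preserving and norm-preserving on nonnegative functions, which explains structurally why the constant in (a) is exactly $1$. Your closing remark that the sign condition $\lb \leq 0 \leq \ub$ plays no role in either part is also accurate.
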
%
\begin{proof}
	\textnormal{(a)}
	A proof based on a computation in 
	\cite[p.\ 870]{Wachsmuth2011}
	can be found in \cite[Lem.\ 3.3.6]{Milz2021a}.
	
	\textnormal{(b)}
	Fix $u \in \adcsp$. We have
	$\quasiinter u \in \csph$, as $\domain$ is bounded.
	Using \Cref{ass:csph}, \eqref{eq:adcsp}, 
	and \eqref{eq:saa:quasiinter}, 
	it follows that $\lb \leq \pi_h^j[u] \leq \ub$.
	Using \Cref{ass:csph} onces more, we have
	$\quasiinter u \in \adcsp$.
	Combined with  $\adcsph = \adcsp \cap
	\csph$, we obtain $\quasiinter u \in \adcsph$.
\end{proof}

\section{Reliable error estimates}
\label{sec:error}

\Cref{prob:femsaa} states the reliable  error estimate.
Let $u^*$ be the solution to the
risk-neutral problem \eqref{eq:saa:lqcp}
and let $\uhN$ be the solution to the discretized
SAA problem \eqref{eq:saa:femsaa}.
We demonstrate in \Cref{lem:existence_measurability} the existence
of solutions and the measuarability of $\uhN$.
We define the problem-dependent parameter
\begin{align}
	\label{eq:cstar}
	\mathcal{C}^* = \tfrac{C_\domain^2}{\rdc_{\min}}
	\norm[L^2(\domain)]{u^*}+\norm[L^2(\domain)]{y_d}.
\end{align}

\begin{theorem}
	\label{prob:femsaa}
	Suppose that
	\Cref{ass:domainkappa,ass:2020-11-08T15:31:39.527,%
		ass:csph,ass:2020-01-31T16:12:16.968}
	are fulfilled. 
	Let $\varepsilon > 0$ and let $\delta$, $h \in (0, 1)$. 
	If 
	$N \geq 2\ln(2/\delta)/\varepsilon^2
	$, 
	then with a probability of at least
	$1-\delta$, 
	\begin{subequations}
		\label{eq:reliableestimate}
		\begin{align}
			\label{eq:reliableestimatea}
			\norm[L^2(\domain)]{\uhN-u^*}
			&<
			(1/\alpha)h\Big(8C_\csp^{1/2}\alpha + 17C_\csp\alpha+
			\tfrac{16C_\csp C_\domain^4}{\rdc_{\min}^2}
			\Big)
			\norm[H^1(\domain)]{u^*}
			\\
			\label{eq:reliableestimateb}
			& \quad  + 
			(32/\alpha)\varepsilon 
			\Big(\tfrac{C_\domain^2\mathcal{C}^*}{\rdc_{\min}}
			\Big)
			\\
			\label{eq:reliableestimatec}
			& \quad +
			(4/\alpha) h
			\Big(\tfrac{C_\csp^{1/2}(C_\domain+1)C_\domain\mathcal{C}^*}{\rdc_{\min}}
			\Big)
			\\
			\label{eq:reliableestimated}
			& \quad +
			(32/\alpha)h^2
			\Big(
			C_Y^2 C_{H^2}^2
			\tfrac{\rdc_{\max}^{7/2}}{\rdc_{\min}^{17/2}}
			\rdc_{\max,1}^4
			\mathcal{C}^* \Big).
		\end{align}
	\end{subequations}
\end{theorem}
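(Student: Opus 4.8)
The plan is to exploit the $\alpha$-strong convexity that both \eqref{eq:saa:lqcp} and \eqref{eq:saa:femsaa} inherit from their Tikhonov terms and to compare the two problems' first-order optimality conditions, decomposing the control error into a deterministic discretization part and a random sampling part. First I would recast \eqref{eq:S} and \eqref{eq:Sh} as linear, boundedly invertible solution operators $u\mapsto S(u,\xi)$ and $u\mapsto S_h(u,\xi)$ on $L^2(\domain)$ and record, using \Cref{ass:domainkappa} and the \friedrichs inequality, the uniform bounds $\norm[L^2(\domain)]{S(u,\xi)}\le (C_\domain^2/\rdc_{\min})\norm[L^2(\domain)]{u}$ together with the analogous estimates for $S_h$ and for the adjoint operators. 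Writing the gradients of the smooth parts as $G(u)=\cE{S(\cdot,\xi)^*(S(u,\xi)-y_d)}+\alpha u$ and $G_{h,N}(u)=\tfrac1N\sum_{i=1}^N S_h(\cdot,\xi^i)^*(S_h(u,\xi^i)-y_d)+\alpha u$, the minimizers $u^*$ and $\uhN$ are characterized by variational inequalities over $\adcsp$ and $\adcsph$ that involve subgradients of $\norm[L^1(\domain)]{\cdot}$, and both $G$ and $G_{h,N}$ are strongly monotone with modulus $\alpha$.

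Next I would establish that $u^*$ has square integrable weak derivatives with a computable bound on $\norm[H^1(\domain)]{u^*}$; this follows from the projection-formula form of the optimality condition and the $H^2$-regularity of the state and adjoint equations granted by \Cref{ass:domainkappa}, and it is what permits the approximation of $u^*$ by $\quasiinter u^*$. Since $\adcsph\subset\adcsp$, the discrete solution $\uhN$ is admissible for the continuous variational inequality, and $\quasiinter u^*\in\adcsph$ by \Cref{lem:saa:collectionquasiinter}(b). Testing the continuous inequality with $\uhN$ and the discrete one with $\quasiinter u^*$, adding the two, using strong monotonicity and the monotonicity of the $L^1$-subdifferential, and discarding the nonnegative term $\tfrac1N\sum_{i=1}^N\norm[L^2(\domain)]{S_h(\uhN-u^*,\xi^i)}^2$, I expect to reach a quadratic inequality of the schematic form
\begin{align*}
\alpha\norm[L^2(\domain)]{\uhN-u^*}^2
&\le \inner[L^2(\domain)]{G(u^*)-G_{h,N}(u^*)}{\uhN-u^*}
+ \inner[L^2(\domain)]{r_h}{\quasiinter u^*-u^*},
\end{align*}
where $r_h=G_{h,N}(\uhN)+\gamma\lambda_h$ with $\lambda_h$ a subgradient of $\norm[L^1(\domain)]{\cdot}$ at $\uhN$.

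It then remains to estimate the two right-hand contributions. The gradient mismatch $G(u^*)-G_{h,N}(u^*)$ splits into a deterministic finite element error $\cE{S(\cdot,\xi)^*(S(u^*,\xi)-y_d)}-\cE{S_h(\cdot,\xi)^*(S_h(u^*,\xi)-y_d)}$, of order $h^2$ with the $\rdc$-dependent constant of \eqref{eq:reliableestimated} (obtained from the $H^2$-regularity and an Aubin--Nitsche duality argument), and a zero-mean sampling error $\cE{S_h(\cdot,\xi)^*(S_h(u^*,\xi)-y_d)}-\tfrac1N\sum_{i=1}^N S_h(\cdot,\xi^i)^*(S_h(u^*,\xi^i)-y_d)$; its summands are i.i.d.\ and, by the operator bounds and the definition \eqref{eq:cstar} of $\mathcal{C}^*$, uniformly bounded in $L^2(\domain)$ by a multiple of $(C_\domain^2/\rdc_{\min})\mathcal{C}^*$, so the exponential inequality of \cite{Pinelis1994} shows this error is at most a constant times $\varepsilon(C_\domain^2/\rdc_{\min})\mathcal{C}^*$ with probability at least $1-\delta$ whenever $N\ge 2\ln(2/\delta)/\varepsilon^2$, which is line \eqref{eq:reliableestimateb}. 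The coupling term $\inner[L^2(\domain)]{r_h}{\quasiinter u^*-u^*}$ is treated with the interpolation bounds of \Cref{ass:2020-01-31T16:12:16.968}, pairing the $H^1$-regular adjoint part of $r_h$ with the $H^{-1}$ estimate and the $L^\infty$-bounded subgradient part with the $L^2$ estimate, and yields the $h$-order terms \eqref{eq:reliableestimatea} and \eqref{eq:reliableestimatec}. A final application of Young's inequality absorbs the factors of $\norm[L^2(\domain)]{\uhN-u^*}$ on the right into the left side; dividing by $\alpha$, solving the resulting quadratic, and adding $\norm[L^2(\domain)]{\quasiinter u^*-u^*}\le C_\csp h\norm[H^1(\domain)]{u^*}$ via the triangle inequality then gives the four-term bound, with the square-root constants $C_\csp^{1/2}$ arising from completing the square.

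The hard part will be the probabilistic step together with the explicit constant tracking. One must verify the hypotheses of the Hilbert-space exponential inequality---in particular the almost-sure uniform boundedness of the $L^2(\domain)$-valued summands and the precise identification of their bound, so that the threshold reproduces the factor $2\exp(-\varepsilon^2N/2)$---and one must derive the fully explicit, $\rdc_{\min}$-, $\rdc_{\max}$- and $\rdc_{\max,1}$-dependent $h^2$ finite element error for the gradient, which rests on the $H^2$-regularity and the duality estimates behind \Cref{ass:2020-11-08T15:31:39.527,ass:2020-01-31T16:12:16.968}. Establishing $u^*\in H^1(\domain)$ with an explicit bound on $\norm[H^1(\domain)]{u^*}$, required before $\quasiinter u^*$ can even be formed, is the other delicate ingredient.
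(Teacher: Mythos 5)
Your skeleton coincides with the paper's: prove $u^*\in H^1(\domain)$ (\Cref{lem:saa:H1reg}) so that $\quasiinter u^*$ is a usable test function, add the two variational inequalities with test points $\uhN$ and $\quasiinter u^*$, use $\alpha$-strong monotonicity, split the gradient mismatch at $u^*$ into an $O(h^2)$ finite element part (\Cref{lem:basicerror}) and a mean-zero i.i.d.\ sampling part controlled by Pinelis' inequality with threshold $N\ge 2\ln(2/\delta)/\varepsilon^2$ (\Cref{thm:saa:bOgutQ6IqE}). However, your treatment of the $L^1$ term has a genuine gap. You write the optimality conditions with subgradients of $\norm[L^1(\domain)]{\cdot}$, invoke monotonicity of the subdifferential, and are left with the residual coupling term $\gamma\inner[L^2(\domain)]{\lambda_h}{\quasiinter u^*-u^*}$ inside $r_h$. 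That term does not vanish: the best available estimate is $|\inner[L^2(\domain)]{\lambda_h}{\quasiinter u^*-u^*}|\le\norm[L^1(\domain)]{\quasiinter u^*-u^*}$, which is $O(h)$ but injects an extra additive contribution proportional to $\gamma h$ into the final bound. The inequality \eqref{eq:reliableestimate} contains no $\gamma$ whatsoever, so your route cannot reproduce the theorem as stated. The missing ingredient is \Cref{lem:saa:collectionquasiinter}\,(a), $\norm[L^1(\domain)]{\quasiinter u}\le\norm[L^1(\domain)]{u}$: the paper keeps the $L^1$ norms themselves in the two inequalities \eqref{eq:cfonoc}, adds them, and the regularization terms telescope to $\gamma\norm[L^1(\domain)]{\quasiinter u^*}-\gamma\norm[L^1(\domain)]{u^*}\le 0$, which is discarded outright — this is precisely why the constants in \eqref{eq:reliableestimate} are $\gamma$-free.

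A secondary difference, which affects the constants though not the general form $c_1h+c_2\varepsilon$ of a tail bound: your quadratic inequality retains $\inner[L^2(\domain)]{G_{h,N}(\uhN)+\gamma\lambda_h}{\quasiinter u^*-u^*}$, i.e.\ the SAA gradient evaluated at the \emph{random} control $\uhN$; bounding it requires almost-sure uniform bounds through the box constraints, producing constants depending on $\lb,\ub$ rather than on $\mathcal{C}^*$ and $\norm[H^1(\domain)]{u^*}$ as in the theorem. The paper instead arranges the key inequality (\Cref{prop:saa:2020-11-14T21:32:40.594}) so that every gradient is evaluated at the deterministic controls $u^*$ or $\quasiinter u^*$ — a point the paper flags explicitly — and bounds $\norm[L^2(\domain)]{\uhN-\quasiinter u^*}$ first, adding the interpolation error $\norm[L^2(\domain)]{u^*-\quasiinter u^*}$ by the triangle inequality afterwards; in your version, whose left-hand side is already $\alpha\norm[L^2(\domain)]{\uhN-u^*}^2$, the final triangle-inequality step you describe is superfluous. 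To recover \eqref{eq:reliableestimate} exactly, replace the subdifferential-monotonicity step by the $L^1$-contraction property of $\quasiinter$ and reorganize the coupling terms so that only $\nabla\erpobj(u^*)$, $\nabla\erpobjhN(u^*)$, and $\nabla\erpobjhN(\quasiinter u^*)$ appear, as in \Cref{prop:saa:2020-11-14T21:32:40.594} and \Cref{lem:basicerror_2}.
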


\Cref{prob:femsaa} is established in \cref{sec:2021-02-22T19:08:49.94}. 
\Cref{prob:femsaa} yields the exponential tail bound in
\eqref{eq:2021-02-22T15:31:07.382} with $c_1$ and $c_2$ being
problem-dependent parameters.
Since $\adcsp$ is bounded, we can establish a bound on 
$\norm[L^2(\domain)]{u^*}$ using \eqref{eq:adcsp}. 
An upper bound on $\norm[H^1(\domain)]{u^*}$ can be derived using
\Cref{lem:saa:2020-11-08T17:14:08.485,lem:saa:H1reg}.
The deterministic parameters in 
the error estimate \eqref{eq:reliableestimate} are as follows:
$C_\domain > 0$ is \friedrichs' constant of the domain $\domain$, 
the parameters 
$\rdc_{\min}$, $\rdc_{\max}$, and
$\rdc_{\max,1}$ are finite by \Cref{ass:domainkappa},
but depend on characteristics of the random diffusion coefficient
$\rdc$, 
$C_{H^2} > 0$ is a constant appearing in an 
$H^2(\domain)$-regularity estimate of the PDE solution
(see \Cref{lem:basicerror}), 
$C_\ssp > 0$ is defined in \Cref{ass:2020-11-08T15:31:39.527}
and depends on the state space discretization, and
$C_\csp > 0$ is defined in \Cref{ass:2020-01-31T16:12:16.968}
and depends on the control space discretization.

\section{Properties of the control problem and proof of the 
	reliable error estimates}
\label{sec:2021-02-22T19:08:49.94}

To establish the reliable error estimate, we define
certain mappings.
We define $\pobj : H_0^1(\domain) \to \real$ 
and $\rpobj : H_0^1(\domain) \times \Xi \to \real$ by 
\begin{align*}
	\pobj(y) = (1/2) \norm[L^2(\domain)]{y-y_d}^2
	\quad \tand \quad 
	\rpobj(u,\xi) = (1/2) \norm[L^2(\domain)]{S(u,\xi)-y_d}^2.
\end{align*}
Furthermore, let us define $\erpobj: L^2(\domain) \to \real$,
$\hat{\erpobj}_N : L^2(\domain) \to \real$
and $\erpobjhN : L^2(\domain) \to \real$
by 
\begin{align*}
	\begin{aligned}
		\erpobj(u) & = \cE{\rpobj(u,\xi)} + (\alpha/2) \norm[L^2(\domain)]{u}^2,
		\;\; \tand \;\; 
		\hat{\erpobj}_N(u) = \frac{1}{N}\sum_{i=1}^N \rpobj(u,\xi^i)
		+ (\alpha/2) \norm[L^2(\domain)]{u}^2,
		\\
		\erpobjhN(u) &= \frac{1}{N}\sum_{i=1}^N \pobj(S_h(u,\xi^i))
		+ (\alpha/2) \norm[L^2(\domain)]{u}^2.
	\end{aligned}
\end{align*}
Since $\xi^1,\xi^2, \ldots$ are defined on the common probability
space $(\Omega, \cF, P)$, 
we can consider the functions $\hat{\erpobj}_N$ and $\erpobjhN$
as mappings defined on
$L^2(\domain) \times \Omega$, but we often omit the second
argument.

To address measurability issues and the existence of solutions,
and to compute derivatives,
we express the PDEs \eqref{eq:S} and \eqref{eq:Sh} as linear systems.
The  linear elliptic PDE \eqref{eq:S} can be written 
equivalently as 
$
A(\xi)y_{\xi} = Bu
$,
where
$A : \Xi \to \sblf[H^{-1}(\domain)]{H_0^1(\domain)}$ 
and $B : L^2(\domain) \to H^{-1}(\domain)$
are defined by
\begin{align*}
	\dualpHzeroone[\domain]{A(\xi)y}{v}
	= \int_{\domain}\rdc(\xi)\nabla y\cdot \nabla v\, \du x
	\;\; \tand \;\;
	\dualpHzeroone[\domain]{Bu}{v}
	=  \inner[L^2(\domain)]{u}{v}.
\end{align*}
Owing to \Cref{ass:domainkappa},
$A$ and $B$ are well-defined  and 
$\norm[\spL{L^2(\domain)}{H_0^1(\domain)}]{B} \leq C_\domain$.
We compute the gradients of $\erpobj$, $\hat{\erpobj}_N$, 
and $\erpobjhN$ using the 
adjoint approach. Therefore, we state the adjoint equation to
the linear elliptic PDE \eqref{eq:S}. 
For each $(u,\xi) \in L^2(\domain) \times \Xi$, 
let the adjoint state $z_{\xi} = z(u,\xi) \in H_0^1(\domain)$ 
be the solution to
the parameterized adjoint equation
\begin{align}
	\label{eq:saa:2020-07-08T15:31:36.424}
	\int_{\domain}\rdc(\xi)\nabla z_{\xi} \cdot \nabla v \, \du x = -
	\int_{\domain} (S(u,\xi)-y_d)v \, \du x
	\quad \tfa \quad v \in H_0^1(\domain).
\end{align}

The discretized PDE \eqref{eq:Sh} can be expressed as
$A_h(\xi)y_{\xi,h}= B_h u$, 
where 
$A_h : \Xi \to \sblf[\ssph^*]{\ssph}$ 
and
$B_h : L^2(\domain) \to \ssph^*$
are defined by
\begin{align*}
	\begin{aligned}
		\dualpb[{\ssph^*,\ssph}]{A_h(\xi)y_h}{v_h} =
		\int_{\domain}\rdc(\xi)\nabla y_{h} \cdot \nabla v_h \, \du x
		\;\; \tand \;\; 
		\dualpb[{\ssph^*,\ssph}]{B_hu}{v_h}
		=  \inner[L^2(\domain)]{u}{v_h}.
	\end{aligned}
\end{align*}
\Cref{ass:2020-11-08T15:31:39.527,ass:domainkappa_1} 
ensure that $A_h$
and $B_h$ are well-defined.
Finally, we state the discretized adjoint equation. 
For each $(u,\xi) \in L^2(\domain) \times \Xi$, 
let the discretized
adjoint state $z_{\xi,h} = z_h(u,\xi) \in \ssph$ be the solution to
the discretized adjoint equation
\begin{align}
	\label{eq:saa:2020-07-08T15:31:36.424h}
	\int_{\domain}\rdc(\xi)\nabla z_{\xi,h} \cdot \nabla v_h  \, \du x = 
	- \int_{\domain} (S_h(u,\xi)-y_d)v_h  \, \du x
	\quad \tfa \quad v_h \in \ssph.
\end{align}

\subsection{Stability estimates}

We formulate stability estimates for the states and adjoint states,
and demonstrate the uniform measurabiliy of the random 
operator $A$ and its inverse. The stability estimates are well-known
(see, e.g., \cite{Charrier2012}) and allow us to establish 
the reliable error estimate \eqref{eq:reliableestimate} 
with problem-dependent parameters made explicit.
Measurability of inverses of measurable operator-valued
mappings between two real separable Banach spaces is a classical topic
(see, e.g., \cite[p.\ 192]{Hans1961}).
Under \Cref{ass:domainkappa}, we demonstrate the uniform measurability
of $A$ and its inverse.

\begin{lemma}
	\label{lem:saa:2020-11-08T17:14:08.485} 
	If \Cref{ass:domainkappa} is satisfied, 
	then the following statements hold.
	\begin{enumerate}[nosep,wide,leftmargin=*,before={\parskip=0pt}]
		\item 
		\label{itm:2020-11-08T15:32:48.054_1111}
		For each $\xi \in \Xi$, $A(\xi)$ is self-adjoint
		and has a bounded inverse.
		The mapping $A$ and its inverse are uniformly measurable.
		
		\item 	
		\label{itm:2020-11-08T15:32:48.054_1}
		For all $(u, \xi) \in L^2(\domain) \times \Xi$, 
		$$
		\eqeqnorm[H^1(\domain)]{S(u, \xi)} \leq (C_\domain/\rdc_{\min})
		\norm[L^2(\domain)]{u}
		\quad \tand \quad 
		\norm[L^2(\domain)]{S(u, \xi)} \leq (C_\domain^2/\rdc_{\min})
		\norm[L^2(\domain)]{u}.
		$$
		
		\item 	
		\label{itm:2020-11-08T19:06:13.137_2}
		For all $(u, \xi) \in L^2(\domain) \times \Xi$, we have
		$z(u, \xi) = -S(S(u, \xi)-y_d, \xi)$,
		\begin{align*}
			\eqeqnorm[H^1(\domain)]{z(u, \xi)} 
			&\leq\tfrac{C_\domain}{\rdc_{\min}}
			\big(\tfrac{C_\domain^2}{\rdc_{\min}}
			\norm[L^2(\domain)]{u}+\norm[L^2(\domain)]{y_d}\big), 
			\quad \tand \\
			\norm[H^1(\domain)]{z(u, \xi)}
			&\leq \tfrac{(C_\domain+1)C_\domain}{\rdc_{\min}}
			\big(\tfrac{C_\domain^2}{\rdc_{\min}}
			\norm[L^2(\domain)]{u}+\norm[L^2(\domain)]{y_d}\big).
		\end{align*}
	\end{enumerate}
\end{lemma}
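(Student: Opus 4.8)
The plan is to treat the three parts in sequence, since part~(a) secures well-posedness of $S(u,\xi)$ and part~(b) feeds directly into part~(c). Throughout I write $a_\xi(y,v) = \innerp{L^2(\domain)^d}{\rdc(\xi)\nabla y}{\nabla v}$ for the bilinear form defining $A(\xi)$ through \eqref{eq:S}. For the self-adjointness in part~(a), I would simply observe that $\rdc(\xi)$ is scalar-valued, so $a_\xi$ is symmetric and hence $A(\xi)=A(\xi)^*$. Boundedness and the existence of a bounded inverse come from the two-sided bound $\rdc_{\min}\leq\rdc(\xi)\leq\rdc_{\max}$ of \Cref{ass:domainkappa_2}: the upper bound gives continuity of $a_\xi$, while the lower bound yields coercivity $a_\xi(y,y)\geq\rdc_{\min}\eqeqnorm[H^1(\domain)]{y}^2$. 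Since $\eqeqnorm[H^1(\domain)]{\cdot}$ is an equivalent norm on $H_0^1(\domain)$ by \friedrichs' inequality, the Lax--Milgram theorem supplies a bounded inverse $A(\xi)^{-1}$ with norm controlled by $1/\rdc_{\min}$ and $C_\domain$.

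For the uniform measurability in part~(a), I would exploit the (strong) measurability of $\rdc$ from \Cref{ass:domainkappa_2}. The map $\rdc(\xi)\mapsto A(\xi)$ is $1$-Lipschitz from $C^1(\bar{\domain})$ into $\sblf[H^{-1}(\domain)]{H_0^1(\domain)}$, because $|a_{\xi_1}(y,v)-a_{\xi_2}(y,v)|\leq\norm[L^\infty(\domain)]{\rdc(\xi_1)-\rdc(\xi_2)}\,\eqeqnorm[H^1(\domain)]{y}\,\eqeqnorm[H^1(\domain)]{v}$. Composing a sequence of simple approximants of $\rdc$ with this Lipschitz map gives simple approximants of $A$, so $A$ is uniformly measurable. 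For $A^{-1}$ I would use that inversion is Lipschitz on operators that are coercive with the uniform constant $\rdc_{\min}$ (via the identity $A_1^{-1}-A_2^{-1}=A_1^{-1}(A_2-A_1)A_2^{-1}$), which transfers uniform measurability from $A$ to $A^{-1}$.

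Part~(b) is a standard energy estimate: testing \eqref{eq:S} with $v=S(u,\xi)$ and using coercivity on the left gives $\rdc_{\min}\eqeqnorm[H^1(\domain)]{S(u,\xi)}^2\leq\innerp{L^2(\domain)}{u}{S(u,\xi)}$; bounding the right side by Cauchy--Schwarz and then \friedrichs' inequality, $\norm[L^2(\domain)]{S(u,\xi)}\leq C_\domain\eqeqnorm[H^1(\domain)]{S(u,\xi)}$, and dividing through yields the seminorm estimate. The $L^2(\domain)$ estimate then follows from one more application of \friedrichs' inequality.

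Part~(c) hinges on a comparison of variational problems. Matching the adjoint weak form \eqref{eq:saa:2020-07-08T15:31:36.424} against the defining equation \eqref{eq:S} of $S$ shows that $z(u,\xi)$ solves the state equation with right-hand side $-(S(u,\xi)-y_d)$, whence $z(u,\xi)=-S(S(u,\xi)-y_d,\xi)$. Applying the seminorm estimate of part~(b) with control $S(u,\xi)-y_d$, then the triangle inequality $\norm[L^2(\domain)]{S(u,\xi)-y_d}\leq\norm[L^2(\domain)]{S(u,\xi)}+\norm[L^2(\domain)]{y_d}$ together with the $L^2(\domain)$ bound on $S(u,\xi)$ from part~(b), produces the claimed bound on $\eqeqnorm[H^1(\domain)]{z(u,\xi)}$. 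The full $H^1(\domain)$-norm bound follows from $\norm[H^1(\domain)]{z(u,\xi)}\leq\norm[L^2(\domain)]{z(u,\xi)}+\eqeqnorm[H^1(\domain)]{z(u,\xi)}\leq(C_\domain+1)\eqeqnorm[H^1(\domain)]{z(u,\xi)}$ by \friedrichs once more. The only genuinely delicate point is the uniform measurability of $A^{-1}$ in part~(a); the energy estimates and the adjoint identity are routine.
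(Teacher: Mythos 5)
Your proposal is correct and follows essentially the same route as the paper: symmetry of the bilinear form plus Lax--Milgram for part~(a); uniform measurability of $A$ by composing simple approximants of $\rdc$ with the Lipschitz map from $C^1(\bar{\domain})$ to $\sblf[H^{-1}(\domain)]{H_0^1(\domain)}$ (the paper invokes the composition rule of Hyt\"onen et al.\ for exactly this); the standard energy estimate for part~(b) (the paper simply cites a known estimate rather than re-deriving it); and the identity $z(u,\xi)=-S(S(u,\xi)-y_d,\xi)$ combined with part~(b) and \friedrichs' inequality for part~(c).

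The one step you state too loosely is the claim that Lipschitz continuity of inversion ``transfers uniform measurability from $A$ to $A^{-1}$.'' Simple mappings $A_k$ converging pointwise to $A$ need not take invertible values, and your Lipschitz estimate via $A_1^{-1}-A_2^{-1}=A_1^{-1}(A_2-A_1)A_2^{-1}$ only holds on the class of $\rdc_{\min}$-coercive operators, which need not contain the values of the $A_k$; so one cannot simply compose the approximants with inversion. The paper closes precisely this gap: for fixed $\xi$, since $A_k(\xi)\to A(\xi)$ and $A(\xi)$ is $\rdc_{\min}$-coercive, $A_k(\xi)$ is $(\rdc_{\min}/2)$-coercive for all $k$ large (depending on $\xi$), hence invertible with $\norm[{\sblf[H_0^1(\domain)]{H^{-1}(\domain)}}]{A_k(\xi)^{-1}}\leq 2/\rdc_{\min}$, and then your resolvent identity yields $A_k(\xi)^{-1}\to A(\xi)^{-1}$. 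Alternatively, your argument works as written if you observe that the simple approximants of $\rdc$ can be chosen with values in the range of $\rdc$, so that every $A_k(\xi)$ is itself $\rdc_{\min}$-coercive. Either repair is routine, so this is a presentational gap rather than a flaw in the approach.
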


\begin{proof}
	\textnormal{(a)}
	Fix $\xi \in \Xi$. 
	The computations in  \cite[p.\ 62]{Hinze2009}
	imply that $A(\xi)$ is self-adjoint.
	\Cref{ass:domainkappa} and
	the Lax--Milgram lemma imply that $A(\xi)$ 
	has a bounded inverse
	with $\norm[{\sblf[H_0^1(\domain)]{H^{-1}(\domain)}}]{A(\xi)^{-1}}
	\leq 1/\rdc_{\min}$.
	
	H\"older's inequality implies that
	the mapping 
	$\phi : C^1(\bar{\domain}) \to \sblf[H^{-1}(\domain)]{H_0^1(\domain)}$
	defined by 
	$\dualpHzeroone[\domain]{\phi(\rrdc)y}{v}
	=
	\int_{\domain}\rrdc\nabla y\cdot \nabla v\, \du x$
	is (Lipschitz) continuous.
	Since $A = \phi \circ \kappa$, 
	the uniform measurability of $A$
	follows from the strong measurability
	of $\rdc$ and the composition rule 
	\cite[Cor.\ 1.1.11]{Hytoenen2016}.
	The set, $O$, of invertible maps in 
	$\sblf[H^{-1}(\domain)]{H_0^1(\domain)}$ is open
	\cite[Thm.\ 5.8]{Alt2016}
	and $O \ni T \mapsto T^{-1}$ is continuous.
	Hence $A^{-1}$ is the composition of a continuous
	function and $A$. Hence $A^{-1}$  
	is uniformly measurable \cite[p.\ 7]{Hytoenen2016}.
	
	\textnormal{(b)}
	The first bound follows from 
	\cite[eq.\ (2.1)]{Charrier2012}.
	The second bound is implied by the first
	one and \friedrichs' inequality.
	
	\textnormal{(c)} 
	Using \eqref{eq:S} and
	\eqref{eq:saa:2020-07-08T15:31:36.424},
	we obtain
	$z(u, \xi) = -S(S(u, \xi)-y_d, \xi)$.
	Combined with part~\ref{itm:2020-11-08T15:32:48.054_1}, 
	we obtain the first stability estimate.
	\friedrichs'
	inequality  implies
	$\norm[H^1(\domain)]{z(u, \xi)}
	\leq (C_\domain+1)\eqeqnorm[H^1(\domain)]{z(u, \xi)} $.
	Hence the first stability estimate implies the second one.
\end{proof}

\Cref{lem:saa:2020-11-08T17:14:08.485_2}  
establishes results similar to those in 
\Cref{lem:saa:2020-11-08T17:14:08.485}, but for the discretized PDEs.
\begin{lemma}
	\label{lem:saa:2020-11-08T17:14:08.485_2} 
	If \Cref{ass:domainkappa,ass:2020-11-08T15:31:39.527} hold and $h \in (0,1)$,
	then the following statements hold.
	\begin{enumerate}[nosep,wide,leftmargin=*]
		\item 
		\label{itm:2020-11-08T15:32:48.054_1111_2}
		For each $\xi \in \Xi$, $A_h(\xi)$ is self-adjoint
		and has a bounded inverse.
		The mapping $A_h$  and its inverse are uniformly measurable.
		
		\item 	
		\label{itm:2020-11-08T15:32:48.054_1_2}
		For all $(u, \xi) \in L^2(\domain) \times \Xi$, 
		$$
		\eqeqnorm[H^1(\domain)]{S_h(u, \xi)} \leq (C_\domain/\rdc_{\min})
		\norm[L^2(\domain)]{u}
		\;\; \tand \;\;  
		\norm[L^2(\domain)]{S_h(u, \xi)} \leq (C_\domain^2/\rdc_{\min})
		\norm[L^2(\domain)]{u}.
		$$
		\item 	
		\label{itm:2020-11-08T19:06:13.137_2_2}
		For all $(u, \xi) \in L^2(\domain) \times \Xi$, we have
		$z_h(u, \xi) = -S_h(S_h(u, \xi)-y_d, \xi)$.
	\end{enumerate}
\end{lemma}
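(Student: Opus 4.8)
The plan is to mirror the proof of \Cref{lem:saa:2020-11-08T17:14:08.485} essentially line by line, exploiting that $\ssph$ is a closed, finite dimensional subspace of $H_0^1(\domain)$ and hence inherits the coercivity structure and \friedrichs' inequality of the ambient space; finite dimensionality only simplifies the invertibility arguments, since on $\ssph$ injectivity already implies bijectivity.

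For part~\ref{itm:2020-11-08T15:32:48.054_1111_2}, I would first note that the bilinear form defining $A_h(\xi)$ in \eqref{eq:Ah} is symmetric, because $\rdc(\xi)$ is scalar-valued, which yields self-adjointness of $A_h(\xi)$ on the Hilbert space $\ssph$. Coercivity on $\ssph$ with constant $\rdc_{\min}$ follows from $\rdc(\xi) \geq \rdc_{\min}$, so the Lax--Milgram lemma gives a bounded inverse with $\norm[\spL{\ssph^*}{\ssph}]{A_h(\xi)^{-1}} \leq 1/\rdc_{\min}$. For uniform measurability I would introduce a mapping $\phi_h : C^1(\bar{\domain}) \to \spL{\ssph}{\ssph^*}$ in analogy with the mapping $\phi$ from the proof of \Cref{lem:saa:2020-11-08T17:14:08.485}, verify that it is Lipschitz via H\"older's inequality, and conclude that $A_h = \phi_h \circ \rdc$ is uniformly measurable by the composition rule \cite[Cor.\ 1.1.11]{Hytoenen2016}. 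Measurability of $A_h^{-1}$ then follows from the same perturbation argument: for simple mappings $A_{h,k}$ converging pointwise to $A_h$, eventual coercivity with constant $\rdc_{\min}/2$ bounds $\norm[\spL{\ssph^*}{\ssph}]{A_{h,k}(\xi)^{-1}} \leq 2/\rdc_{\min}$, and the resolvent identity $A_h(\xi)^{-1}-A_{h,k}(\xi)^{-1} = A_h(\xi)^{-1}(A_{h,k}(\xi)-A_h(\xi))A_{h,k}(\xi)^{-1}$ gives convergence of the inverses.

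For part~\ref{itm:2020-11-08T15:32:48.054_1_2}, I would test \eqref{eq:Sh} with $v_h = S_h(u,\xi)$ and combine coercivity, the Cauchy--Schwarz inequality, and \friedrichs' inequality $\norm[L^2(\domain)]{S_h(u,\xi)} \leq C_\domain \eqeqnorm[H^1(\domain)]{S_h(u,\xi)}$---valid because $S_h(u,\xi) \in \ssph \subset H_0^1(\domain)$---to obtain the seminorm bound; the $L^2(\domain)$-bound then follows by applying \friedrichs' inequality once more. This is the verbatim discrete analogue of the estimates in \Cref{lem:saa:2020-11-08T17:14:08.485}\ref{itm:2020-11-08T15:32:48.054_1}. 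For part~\ref{itm:2020-11-08T19:06:13.137_2_2}, I would compare the discretized adjoint equation \eqref{eq:saa:2020-07-08T15:31:36.424h} with the defining identity of $S_h$ in \eqref{eq:Sh}: by linearity of $S_h(\cdot,\xi)$, the element $-S_h(S_h(u,\xi)-y_d,\xi) \in \ssph$ satisfies exactly the variational characterization of $z_h(u,\xi)$, so the invertibility of $A_h(\xi)$ from part~\ref{itm:2020-11-08T15:32:48.054_1111_2} forces $z_h(u,\xi) = -S_h(S_h(u,\xi)-y_d,\xi)$.

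I expect the only step requiring genuine care to be the uniform measurability of $A_h^{-1}$ in part~\ref{itm:2020-11-08T15:32:48.054_1111_2}, since the stability estimates and the adjoint representation transfer immediately from the continuous setting. Even there, however, the argument is essentially a transcription of the one already carried out for $A^{-1}$, with $H_0^1(\domain)$ and $H^{-1}(\domain)$ replaced throughout by $\ssph$ and $\ssph^*$.
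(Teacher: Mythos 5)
Your proposal is correct and is exactly what the paper intends: its own proof of this lemma consists of the single remark that "the assertions can be established using arguments similar to those used in the proof of \Cref{lem:saa:2020-11-08T17:14:08.485}," and your write-up carries out precisely that transcription (coercivity plus Lax--Milgram on $\ssph$, the composition rule and resolvent identity for uniform measurability of $A_h$ and $A_h^{-1}$, testing \eqref{eq:Sh} with $v_h = S_h(u,\xi)$ together with \friedrichs' inequality, and uniqueness for the discrete adjoint representation).
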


\begin{proof}
	The assertions can be established using arguments similar to 
	those used  in 
	the proof of \Cref{lem:saa:2020-11-08T17:14:08.485}.
\end{proof}

\subsection{Differentiability, existence of solutions, and 
	optimality conditions}

We establish the continuous
differentiability of $\erpobj$, $\hat{\erpobj}_N$ and $\erpobjhN$
using  the adjoint approach and \cite[Lem.\ 2.1]{Martin2021a},
comment on the existence of solutions and 
state first-order necessary optimality conditions. 
These basic facts are used in subsequent sections and are 
essential for our proof of \Cref{prob:femsaa}.

\begin{lemma}
	\label{lem:differentiability}
	Let \Cref{ass:domainkappa,ass:2020-11-08T15:31:39.527,%
		ass:csph} 
	hold. 
	Then, the following statements hold true.
	\begin{enumerate}[nosep,wide,leftmargin=*]
		\item 
		The mappings
		$S$, $S_h$, $z$, and $z_h$ are \Caratheodory\ mappings. 
		
		\item 
		\label{eq:cEzu}
		For each $u \in L^2(\domain)$, 
		$\cE{z(u,\xi)} \in H_0^1(\domain)$.

		\item 
		\label{itm:gradients}
		The functions $\erpobj$, $\hat{\erpobj}_N$
		and $\erpobjhN$
		are strongly convex with parameter
		$\alpha$ and
		continuously differentiable.
		For all $u \in L^2(\domain)$,
		$\nabla \erpobj(u) = \alpha u - B^*\cE{z(u,\xi)}$,
		\begin{align*}
			\nabla \hat{\erpobj}_N(u)
			= \alpha u - B^*\bigg[\frac{1}{N}\sum_{i=1}^N 
			z(u,\xi^i)\bigg],
			\quad \tand \quad 
			\nabla \erpobjhN(u)
			= \alpha u
			- B_h^*\bigg[\frac{1}{N}\sum_{i=1}^N z_h(u,\xi^i)\bigg].
		\end{align*}
		\item For each $u \in L^2(\domain)$,
		$B^*v = v$ for all $v \in H_0^1(\domain)$
		and $B_h^*v_h = v_h$
		for all $v_h \in \ssph$.
		\item 
		The function $\erpobjhN $ is a \Caratheodory\
		function.
	\end{enumerate}
\end{lemma}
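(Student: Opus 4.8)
The plan is to treat the five assertions in turn, relying throughout on the representations $S(u,\xi) = A(\xi)^{-1}Bu$ and $z(u,\xi) = -S(S(u,\xi)-y_d,\xi)$ together with their discrete analogues and the uniform measurability of $A^{-1}$ and $A_h^{-1}$ from \Cref{lem:saa:2020-11-08T17:14:08.485,lem:saa:2020-11-08T17:14:08.485_2}. Recall that a mapping is \Caratheodory\ if it is measurable in $\xi$ for each fixed control and continuous in the control for each fixed $\xi$. For part~(a) the continuity in $u$ is immediate: for fixed $\xi$ the map $u \mapsto S(u,\xi) = A(\xi)^{-1}Bu$ is linear and bounded, and $u \mapsto z(u,\xi)$ is affine in $u$ because $S(\cdot,\xi)$ is linear. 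For the measurability in $\xi$ of $S(u,\cdot)$ I would note that $\xi \mapsto A(\xi)^{-1}$ is uniformly measurable and $Bu$ is fixed, so $\xi \mapsto A(\xi)^{-1}Bu$ is strongly measurable by the composition rule for strongly measurable maps. For $z(u,\cdot) = -A(\xi)^{-1}B(S(u,\xi)-y_d)$ the argument $B(S(u,\xi)-y_d)$ is itself $\xi$-dependent; here I would invoke that the pointwise action of a uniformly measurable operator-valued map on a strongly measurable vector-valued map is strongly measurable. The discrete maps $S_h$ and $z_h$ are handled identically.

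For part~(b), strong measurability of $z(u,\cdot)$ from part~(a), combined with the $\xi$-uniform bound on $\norm[H^1(\domain)]{z(u,\xi)}$ from \Cref{lem:saa:2020-11-08T17:14:08.485}, part~\ref{itm:2020-11-08T19:06:13.137_2}, shows that $z(u,\cdot)$ is Bochner integrable over the probability space (strong measurability together with an integrable, indeed constant, majorant), so that $\cE{z(u,\xi)}$ is a well-defined element of $H_0^1(\domain)$.

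Part~(c) is the technical core. Strong convexity with parameter $\alpha$ follows because each objective is the sum of the $\alpha$-strongly convex term $(\alpha/2)\norm[L^2(\domain)]{u}^2$ and convex terms: $u \mapsto \rpobj(u,\xi) = (1/2)\norm[L^2(\domain)]{S(u,\xi)-y_d}^2$ is the composition of the convex squared norm with the affine map $u \mapsto S(u,\xi)-y_d$, and expectations and finite averages preserve convexity. For differentiability I would first fix $\xi$ and compute $\nabla_u \rpobj(u,\xi) = -B^* z(u,\xi)$ by the adjoint approach: the directional derivative $\inner[L^2(\domain)]{S(u,\xi)-y_d}{S(v,\xi)}$ is rewritten using the symmetry of the bilinear form $\innerp{L^2(\domain)^d}{\rdc(\xi)\nabla\cdot}{\nabla\cdot}$, testing the adjoint equation \eqref{eq:saa:2020-07-08T15:31:36.424} with $S(v,\xi)$ and the state equation \eqref{eq:S} with $z(u,\xi)$, to obtain $\inner[L^2(\domain)]{v}{-z(u,\xi)}$. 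The finite-sum objectives $\hat{\erpobj}_N$ and $\erpobjhN$ then inherit continuous differentiability termwise, the latter using $B_h^*$ and $z_h$. The only nontrivial point is differentiating $\cE{\rpobj(\cdot,\xi)}$ under the expectation; here I would exploit that $\rpobj(\cdot,\xi)$ is quadratic in $u$ with $\xi$-uniformly bounded gradient, so a dominated-convergence argument justifies the interchange $\nabla\cE{\rpobj(u,\xi)} = \cE{\nabla_u\rpobj(u,\xi)} = -B^*\cE{z(u,\xi)}$, the last equality by linearity of $B^*$ and of the Bochner integral. Continuity of $\nabla\erpobj$ is then clear, since $u \mapsto \cE{z(u,\xi)}$ is affine.

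Finally, parts~(d) and~(e) are short. Part~(d) follows from part~(c) upon observing that $B^*$ acts on $H_0^1(\domain) \subset L^2(\domain)$ as the $L^2(\domain)$-inclusion, so $\inner[L^2(\domain)]{B^*w}{v} = \inner[L^2(\domain)]{w}{v}$. For part~(e), continuity of $u \mapsto \erpobjhN(u)$ for fixed $\omega$ is contained in part~(c), while measurability of $\omega \mapsto \erpobjhN(u)(\omega)$ for fixed $u$ follows by composing the measurable map $\omega \mapsto \xi^i(\omega)$, the measurable map $\xi \mapsto S_h(u,\xi)$ from part~(a), and the continuous $\pobj$, and then summing. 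I expect the main obstacle to be the rigorous interchange of differentiation and expectation in part~(c), which is precisely where the $\xi$-uniform stability bounds and the Bochner integrability established above are needed.
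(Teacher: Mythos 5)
Your proposal is correct and takes essentially the same route as the paper's proof: the representations $S(u,\xi)=A(\xi)^{-1}Bu$ and $z(u,\xi)=-S(S(u,\xi)-y_d,\xi)$ together with uniform measurability of $A^{-1}$ and a composition rule for part (a), Bochner integrability from the $\xi$-uniform $H^1(\domain)$-bound for part (b), the adjoint approach combined with an interchange-of-expectation-and-derivative argument for part (c), and composition of the continuous $\pobj$ with the \Caratheodory\ mapping $S_h$ for part (e). The only point you leave implicit is that the third identity in part (d) also requires computing $B_h^*$, which involves the identification $(\ssph^*)^*=\ssph$ but then follows by the same inclusion-type computation you give for $B^*$, namely $\inner[L^2(\domain)]{B_h^*v_h}{u}=\dualp[\ssph]{B_hu}{v_h}=\inner[L^2(\domain)]{v_h}{u}$.
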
%
\begin{proof}
	\textnormal{(a)}
	We verify the statements for $S$ and $z$ only.
	We have $S(u,\xi) = A(\xi)^{-1}Bu$
	and $z(u,\xi) = -S(S(u,\xi)-y_d,\xi)$;
	see \Cref{lem:saa:2020-11-08T17:14:08.485}.
	\Cref{lem:saa:2020-11-08T17:14:08.485} further
	ensures the uniform measurability of $A^{-1}$. 
	Combined with composition rule
	\cite[Prop.\ 1.1.28]{Hytoenen2016}, we conclude
	that 
	$S(u,\cdot) = A(\cdot)^{-1}Bu$ and hence $z(u,\cdot)$
	are measurable for each $u \in L^2(\domain)$. For each $\xi \in \Xi$, 
	\Cref{lem:saa:2020-11-08T17:14:08.485} also ensures that
	$S(\cdot, \xi)$ and $z(\cdot, \xi)$ are continuous.

	\textnormal{(b)}
	\Cref{lem:saa:2020-11-08T17:14:08.485} 
	and part~(a) ensure
	that $z(u,\cdot)$ is measurable and 
	$\cE{\eqeqnorm[H^1(\domain)]{z(u, \xi)}} < \infty$.
	Hence
	$\cE{z(u, \xi)}\in H_0^1(\domain)$
	\cite[p.\ 14]{Hytoenen2016}.

	\textnormal{(c)}
	The statements for $\hat{\erpobj}_N$
	and $\erpobjhN$ are a consequence of the adjoint approach
	\cite[sect.\ 1.6.2]{Hinze2009}, the fact
	that $\csph$ and $\ssph$ are Hilbert spaces, and
	\Cref{lem:saa:2020-11-08T17:14:08.485,%
		lem:saa:2020-11-08T17:14:08.485_2}.
	The \gateaux\ differentiability of $\erpobj$ 
	and the gradient formula are implied by \cite[Lem.\ 2.1]{Martin2021a}
	(see also \cite[p.\ 989]{Martin2021a}
	and \cite[Lem.\ C.3]{Geiersbach2020})
	as well as the fact that
	$\cE{B^*z(u,\xi)} = B^*\cE{z(u,\xi)}$ for all $u \in L^2(\domain)$.
	This identity is implied by 
	part~\ref{eq:cEzu}, 
	the linearity and boundedness of $B^*$,
	and the definition of the Bochner integral
	\cite[eq.\ (1.2)]{Hytoenen2016}.
	Using \Cref{lem:saa:2020-11-08T17:14:08.485,%
		lem:saa:2020-11-08T17:14:08.485_2}
	and the dominated convergence theorem, 
	we can show the continuous differentiability
	of $\erpobj$.
	The strong convexity is a result of the control
	regularization $(\alpha/2)\norm[L^2(\domain)]{\cdot}^2$
	and the fact that $\csph \subset L^2(\domain)$ is a Hilbert space.
	
	\textnormal{(d)}
	Since the adjoint operator $B^*$ of $B$ is given by
	$\inner[L^2(\domain)]{B^*v}{u} = \inner[L^2(\domain)]{v}{u}$
	for $u \in L^2(\domain)$ and $v \in H_0^1(\domain)$
	(cf.\ \cite[p.\ 62]{Hinze2009}), 
	we obtain  the first identity.
	We recall that $L^2(\domain)$ is identified with 
	$L^2(\domain)^*$ and $(\ssph^*)^*$ is identified with $\ssph$.
	Let us define
	$\iota_h \in \spL{\ssph}{L^2(\domain)}$ by
	$\inner[L^2(\domain)]{\iota_hv_h}{u} 
	= \inner[L^2(\domain)]{v_h}{u}$.
	Since $\ssph$ is a subspace of $L^2(\domain)$, we have
	$\iota_h^* = B_h$ (cf.\ \cite[p.\ 21]{Bonnans2013}).
	Hence $(\iota_h^*)^* = B_h^*$. 
	Combined with $(\iota_h^*)^* = \iota_h$
	(cf.\ \cite[p.\ 390]{Alt2016}), we have $\iota_h = B_h^*$.
	
	\textnormal{(e)}
	The mapping
	$(u,\xi) \mapsto \pobj(S_h(u,\xi))$ is a composition of 
	the continuous function $H_0^1(\domain) \ni y \mapsto \pobj(y)$
	with the \Caratheodory\
	mapping $(u,\xi) \mapsto S_h(u,\xi) \in \ssph$. 
	Since $\ssph \subset H_0^1(\domain)$ is a closed subspace, 
	the function $\erpobjhN$  is a \Caratheodory\
	function.
\end{proof}

\Cref{lem:existence_measurability} establishes the existence of solutions
using standard arguments.

\begin{lemma}
	\label{lem:existence_measurability}
	If \Cref{ass:domainkappa,ass:2020-11-08T15:31:39.527,%
		ass:csph}  hold, 
	then \emph{(a)} the control problem \eqref{eq:saa:lqcp} 
	has a unique solution $u^*$, 
	\emph{(b)}
	for each $\omega \in \Omega$,
	the discretized SAA problem \eqref{eq:saa:femsaa} 
	has a unique  solution $\uhN(\omega)$, and
	\emph{(c)}
	$\uhN$ is measurable.
\end{lemma}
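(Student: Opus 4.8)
The plan is to establish the three claims by reducing each to a standard variational argument combined with the structural properties already collected in the excerpt. For part (a), I would observe that by \Cref{lem:differentiability}\,\ref{itm:gradients} the reduced objective $\erpobj$ is strongly convex with parameter $\alpha > 0$ and continuous on $L^2(\domain)$. The $L^1(\domain)$-regularization term $\gamma\norm[L^1(\domain)]{\cdot}$ is convex and lower semicontinuous (as a norm), and the indicator $I_{\adcsp}$ of the feasible set is convex and lower semicontinuous because $\adcsp$ from \eqref{eq:adcsp} is closed and convex. Hence the full objective of \eqref{eq:saa:lqcp} is a strongly convex, proper, lower semicontinuous function on the Hilbert space $L^2(\domain)$. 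Strong convexity forces coercivity, so a standard direct-method argument (take a minimizing sequence, extract a weakly convergent subsequence, pass to the limit using weak lower semicontinuity) yields existence, and strong convexity gives uniqueness of $u^*$.

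For part (b) I would fix $\omega \in \Omega$ and argue identically for the objective of \eqref{eq:saa:femsaa} restricted to $\adcsph = \adcsp \cap \csph$. Here $\csph$ is finite dimensional (hence closed) by the discussion following \Cref{ass:csph}, so $\adcsph$ is a closed convex subset of $L^2(\domain)$; by \Cref{lem:differentiability}\,\ref{itm:gradients} the sample-average reduced objective $\erpobjhN$ is again strongly convex with parameter $\alpha$ and continuous. The same direct-method/strong-convexity reasoning delivers a unique minimizer $\uhN(\omega)$. Equivalently, since the problem is finite dimensional one may invoke coercivity and continuity of a strictly convex function over a closed convex set.

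Part (c), the measurability of the map $\omega \mapsto \uhN(\omega)$, is the main obstacle and requires a measurable-selection argument rather than a pointwise one. The key ingredient is \Cref{lem:differentiability}(e), which states that $\erpobjhN$ is a \Caratheodory\ function on $L^2(\domain) \times \Omega$: it is continuous in $u$ for each fixed $\omega$ and measurable in $\omega$ for each fixed $u$. I would combine this with the strong convexity and the compactness/closedness of $\adcsph$ to apply a measurable maximum theorem (for instance a Berge-type or Aumann--von Neumann measurable-selection result): the solution set is single-valued by uniqueness from part (b), and the argmin of a \Caratheodory\ integrand over a fixed measurable (indeed constant-in-$\omega$) constraint set $\adcsph$ admits a measurable selection, which, being unique, coincides with $\uhN$. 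The delicate point is ensuring the hypotheses of the selection theorem are met, namely that $\adcsph$ is a complete separable metric space and the integrand is jointly measurable; both follow since $\adcsph$ sits in the separable Hilbert space $\csph$ and the \Caratheodory\ property supplies joint measurability. I would therefore conclude that $\uhN$ is measurable as the unique measurable selection of the argmin mapping.
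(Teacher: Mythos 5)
Your proposal is correct and takes essentially the same route as the paper: for (a) and (b) the paper also combines continuity and strong convexity of $\erpobj$ and $\erpobjhN$ (from \Cref{lem:differentiability}) with closedness and convexity of $\adcsp$ and $\adcsph$, citing \cite[Lem.\ 2.33]{Bonnans2013} where you spell out the direct method. For (c) the paper likewise exploits the \Caratheodory\ property of $\erpobjhN$, the completeness of $(\Omega,\cF,P)$, and uniqueness, invoking the measurable maximum/selection theorem \cite[Thm.\ 8.2.11]{Aubin2009}, exactly the argument you sketch.
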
%
\begin{proof}
	(a)--(b) The existence 
	and uniqueness can be established
	using \cite[Lem.\ 2.33]{Bonnans2013}.
	
	\textnormal{(c)}
	Since $\erpobjhN$ is a \Caratheodory\ function
	(see \Cref{lem:differentiability}), 
	$(\Omega, \cF, P)$ is complete, 
	and $\norm[L^1(\domain)]{\cdot}$ is continuous, 
	$\uhN : \Omega \to \csph$ 
	is measurable \cite[Thm.\ 8.2.11]{Aubin2009}.
	Since $\csph$ is a closed subspace of $L^2(\domain)$, 
	$\uhN : \Omega \to L^2(\domain)$ is also measurable.
\end{proof}
The next lemma provides 
consequences of
first-order necessary optimality conditions.
\begin{lemma}
	If \Cref{ass:domainkappa,ass:2020-11-08T15:31:39.527,%
		ass:csph} hold, then
	\begin{align}
		\label{eq:cfonoc}
		\begin{aligned}
			\innerp{L^2(\domain)}{\nabla \erpobj(u^*)}
			{\uhN-u^*} + \gamma\norm[L^1(\domain)]{\uhN}
			- \gamma\norm[L^1(\domain)]{u^*} &\geq 0,
			\\
			\innerp{L^2(\domain)}{\nabla \erpobjhN(\uhN)}
			{\quasiinter u^* -\uhN} + 
			\gamma\norm[L^1(\domain)]{\quasiinter u^*}
			- \gamma\norm[L^1(\domain)]{\uhN} &\geq 0.
		\end{aligned}
	\end{align}
\end{lemma}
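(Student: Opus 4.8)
The plan is to derive both inequalities as the standard first-order necessary (and, by convexity, also sufficient) optimality conditions for minimizing the sum of a continuously differentiable convex function and a convex, nonsmooth term over a convex feasible set. I would first record the abstract principle: if $C \subset L^2(\domain)$ is convex, $f : L^2(\domain) \to \real$ is continuously differentiable and convex, and $g : L^2(\domain) \to \real$ is convex, then any minimizer $\bar u \in C$ of $f + g$ over $C$ satisfies $\innerp{L^2(\domain)}{\nabla f(\bar u)}{v - \bar u} + g(v) - g(\bar u) \geq 0$ for every $v \in C$. This follows by considering the scalar function $t \mapsto f(\bar u + t(v - \bar u)) + g(\bar u + t(v - \bar u))$ on $[0,1]$, which attains its minimum at $t = 0$ since $\bar u + t(v - \bar u) \in C$ by convexity; bounding the difference quotient of the $g$-part by $g(v) - g(\bar u)$ via convexity and differentiating the smooth part, the nonnegativity of the right derivative at $t = 0$ yields the claimed variational inequality.

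For the first inequality I would apply this principle with $f = \erpobj$, $g = \gamma \norm[L^1(\domain)]{\cdot}$, $C = \adcsp$, and test direction $v = \uhN$, so that $u^*$, the minimizer of \eqref{eq:saa:lqcp}, obeys the condition. Continuous differentiability, convexity, and indeed strong convexity of $\erpobj$ are supplied by \Cref{lem:differentiability}, and $\norm[L^1(\domain)]{\cdot}$ is convex. The only point to verify is that the test direction is admissible for \eqref{eq:saa:lqcp}, i.e.\ $\uhN \in \adcsp$; this holds because $\uhN$ solves \eqref{eq:saa:femsaa} and $\adcsph = \adcsp \cap \csph \subset \adcsp$.

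For the second inequality I would apply the same principle with $f = \erpobjhN$, $g = \gamma \norm[L^1(\domain)]{\cdot}$, $C = \adcsph$, and test direction $v = \quasiinter u^*$, so that $\uhN$, the minimizer of \eqref{eq:saa:femsaa}, satisfies the variational inequality. Here \Cref{lem:differentiability} again furnishes the differentiability and convexity of $\erpobjhN$, and $\adcsph$ is convex. The step requiring care is the feasibility of this test direction: I need $\quasiinter u^* \in \adcsph$, which is exactly \Cref{lem:saa:collectionquasiinter}\,(b) applied to $u^* \in \adcsp$.

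The argument is essentially bookkeeping once the abstract optimality principle is in place, and I do not anticipate a genuine obstacle. The most delicate aspect is the deliberate, asymmetric choice of test directions---$\uhN$ for the continuous problem and the quasi-interpolant $\quasiinter u^*$ for the discretized problem---since these are precisely the pairings that will later combine to control $\norm[L^2(\domain)]{\uhN - u^*}$; the admissibility of $\quasiinter u^*$ in the discrete feasible set, guaranteed by \Cref{lem:saa:collectionquasiinter}, is the one nontrivial structural fact being used.
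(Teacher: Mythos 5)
Your proof is correct and follows essentially the same route as the paper: both arguments reduce the two inequalities to the standard first-order variational inequality for minimizing a smooth convex function plus a convex nonsmooth term over a convex set, applied with the identical test directions ($\uhN\in\adcsp$ for the continuous problem, $\quasiinter u^*\in\adcsph$ for the discrete one) and the identical feasibility facts from \Cref{lem:differentiability} and \Cref{lem:saa:collectionquasiinter}. The only difference is cosmetic: the paper cites this optimality principle from the literature (Ito--Kunisch, Thm.~4.42), whereas you prove it inline via the difference-quotient argument, which is a perfectly valid self-contained substitute.
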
%
\begin{proof}
	\Cref{lem:differentiability} ensures that  $\erpobj$
	and $\erpobjhN$ are continuously differentiable and convex.
	\Cref{lem:saa:collectionquasiinter} 
	ensures $\quasiinter u^* \in \adcsph$
	and $\adcsph = \csph \cap \adcsp$ yields
	$\uhN \in \adcsp$.
	Hence the inequalities follow from the 
	optimality conditions derived in 
	\cite[Thm.\ 4.42]{Ito2008}.
\end{proof}

\subsection{Regularity of the solution}

We show that the solution $u^*$ to \eqref{eq:saa:lqcp}
has square integrable weak derivatives
and provide a bound on the solution's weak derivatives.
Our derivation is based on standard arguments
used to establish regularity of solutions to deterministic
PDE-constrained optimization problems
\cite{Borzi2005,Casas2012a,Falk1973,Geveci1979,%
	Malanowski1982,Troeltzsch2010a,Wachsmuth2011}.

\begin{lemma}
	\label{lem:saa:H1reg}
	If \Cref{ass:domainkappa} holds, 
	then 
	$u^* $,
	$\nabla \erpobj(u^*) \in H^1(\domain)$,
	and 
	$$
	\norm[H^1(\domain)]{u^*}
	\leq (1/\alpha)\norm[H^1(\domain)]{\cE{z(u^*, \xi)}}
	+ (|\lb|+\ub)\norm[L^2(\domain)]{1}.
	$$
\end{lemma}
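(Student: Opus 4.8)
The plan is to exploit the structure of the first-order optimality condition for $u^*$ together with the regularizing effect of the quadratic term. Since $u^*$ solves \eqref{eq:saa:lqcp}, it satisfies a variational inequality of the form
\begin{align*}
\innerp{L^2(\domain)}{\nabla \erpobj(u^*) + \gamma\,\partial\norm[L^1(\domain)]{u^*}}{v-u^*} \geq 0
\quad \tfa \quad v \in \adcsp,
\end{align*}
which, using $\nabla \erpobj(u^*) = \alpha u^* - B^*\cE{z(u^*,\xi)}$ from \Cref{lem:differentiability}\ref{itm:gradients}, can be rearranged to express $u^*$ pointwise as a projection of a more regular quantity. The key observation is that the optimal control admits a representation of the form $u^* = \proj{[\lb,\ub]}{(1/\alpha)(w^* - \gamma s^*)}$, where $w^* = B^*\cE{z(u^*,\xi)} \in H^1(\domain)$ is the projected adjoint state and $s^*$ is a pointwise subgradient of the $L^1$-term. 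Because $\lb \leq 0 \leq \ub$ and the projection onto a box is $1$-Lipschitz, the derivative structure of $u^*$ is controlled by that of the adjoint state.

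First I would establish that $\cE{z(u^*,\xi)} \in H^1(\domain)$, which will follow from \Cref{lem:saa:2020-11-08T17:14:08.485}: the adjoint state solves an elliptic equation whose right-hand side $S(u^*,\xi)-y_d$ lies in $L^2(\domain)$, and under \Cref{ass:domainkappa} one obtains at least $H^1$-regularity of $z(u^*,\xi)$ uniformly in $\xi$, so that the Bochner expectation again lies in $H^1(\domain)$ with $\norm[H^1(\domain)]{\cE{z(u^*,\xi)}} \leq \cE{\norm[H^1(\domain)]{z(u^*,\xi)}}$ by Jensen's inequality for the norm. Next I would invoke the projection formula for $u^*$. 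The crucial structural fact is that the nonsmooth $\gamma\norm[L^1(\domain)]{\cdot}$-term, combined with the box constraints, yields a pointwise formula in which $u^*$ is obtained from $\nabla\erpobj(u^*) = \alpha u^* - \cE{z(u^*,\xi)}$ (identifying $B^*$ with the inclusion since $L^2(\domain)$ is self-dual) by a composition of a soft-thresholding operator and a projection onto $[\lb,\ub]$. Both operations are Lipschitz with constant one and map $H^1$-functions to $H^1$-functions when the active-set boundaries $\pm\gamma$ are constants, so $u^*$ inherits weak differentiability from $\cE{z(u^*,\xi)}$.

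The main obstacle I anticipate is the $H^1$-regularity of the \emph{thresholded} function, i.e.\ showing that applying the pointwise nonlinearity $t \mapsto \proj{[\lb,\ub]}{(1/\alpha)\max\{0,|t|-\gamma\}\operatorname{sign}(t)}$ to $\cE{z(u^*,\xi)} \in H^1(\domain)$ preserves the $H^1$-regularity and does not increase the seminorm. This is a Stampacchia-type chain-rule argument: the relevant map is globally Lipschitz and piecewise $C^1$, hence by the chain rule for Sobolev functions (truncation and projection preserve $H^1(\domain)$ and are non-expansive on the gradient) we obtain $u^* \in H^1(\domain)$ with $\eqeqnorm[H^1(\domain)]{u^*} \leq (1/\alpha)\eqeqnorm[H^1(\domain)]{\cE{z(u^*,\xi)}}$. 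For the $L^2$-part of the norm I would simply use the box bound $|u^*(x)| \leq |\lb|+\ub$ almost everywhere, giving $\norm[L^2(\domain)]{u^*} \leq (|\lb|+\ub)\norm[L^2(\domain)]{1}$. Combining the seminorm estimate with this $L^2$-bound through the triangle inequality on $\norm[H^1(\domain)]{\cdot} = (\norm[L^2(\domain)]{\cdot}^2 + \eqeqnorm[H^1(\domain)]{\cdot}^2)^{1/2}$ and discarding the $\gamma s^*$ contribution (which only improves the seminorm estimate, as soft-thresholding is non-expansive) yields the stated bound. The assertion $\nabla\erpobj(u^*) \in H^1(\domain)$ then follows immediately, since $\nabla\erpobj(u^*) = \alpha u^* - \cE{z(u^*,\xi)}$ is a linear combination of two $H^1(\domain)$-functions.
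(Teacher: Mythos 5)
Your proposal is correct and takes essentially the same route as the paper: write the first-order optimality condition as $u^* = \prox{\psi/\alpha}{(1/\alpha)B^*\cE{z(u^*,\xi)}}$ with $\psi = \gamma\norm[L^1(\domain)]{\cdot} + I_{\adcsp}$, use $\cE{z(u^*,\xi)} \in H_0^1(\domain)$ from \Cref{lem:differentiability}, and exploit that this proximity operator---soft-thresholding followed by projection onto $[\lb,\ub]$---maps $H^1(\domain)$ into $H^1(\domain)$ with the stated norm control. The only difference is presentational: you inline the Stampacchia-type chain-rule argument (and the pointwise a.e.\ characterization of the prox) that the paper factors out into \Cref{lem:Feb0420211008,lem:Feb0420211008_2}, which are proved there via Rockafellar's interchange theorem and exactly the thresholding-plus-projection decomposition you describe.
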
%
\begin{proof}
	Using \Cref{lem:differentiability}
	and the optimality of $u^*$, we obtain the  optimality
	condition
	$u^* = \prox{\psi/\alpha}{(1/\alpha)B^*\cE{z(u^*,\xi)}}$
	(cf.\ \cite[p.\ 2092]{Mannel2020}),
	where $\psi(\cdot) = \gamma \norm[L^1(\domain)]{\cdot} + I_{\adcsp}(\cdot)$.
	\Cref{lem:differentiability} yields $\cE{z(u^*,\xi)}\in H_0^1(\domain)$.
	Combined with $B^*v = v$ valid for all $v \in H_0^1(\domain)$
	and \Cref{lem:Feb0420211008}, we obtain $u^* \in H^1(\domain)$
	and the stability estimate.
	\Cref{lem:differentiability} further ensures
	$\nabla \erpobj(u^*) \in H^1(\domain)$.
\end{proof}%

\subsection{Basic error estimates}
We state basic error estimates which are direct consequences
of those derived in \cite{Ali2017,Charrier2013}
(see also \cite{Guth2019}).

\begin{lemma}
	\label{lem:basicerror}
	Let \Cref{ass:domainkappa,ass:2020-11-08T15:31:39.527}
	hold and let $h \in (0,1)$.
	Then, the following statements hold.
	\begin{enumerate}[wide,nosep,leftmargin=*]
		\item 
		\label{itm:basicerror1}
		There exists $C_{H^2} > 0$
		such that
		$
		\norm[H^2(\domain)]{S(u, \xi)}
		\leq C_{H^2}\newconstant{\label{H2}} \norm[L^2(\domain)]{u}
		$
		and $S(u, \xi) \in H^2(\domain)$
		for all $(u, \xi) \in L^2(\domain) \times \Xi$,
		where 
		\begin{align*}
			\oldconstant{H2} = (\rdc_{\max}/\rdc_{\min}^4)\rdc_{\max,1}^2.
		\end{align*}

		\item 
		\label{itm:2020-11-08T15:32:48.054_4}
		$
		\eqeqnorm[H^1(\domain)]{S(u,\xi)-S_h(u, \xi)}
		\leq C_\ssp C_{H^2}\newconstant{\label{H1h}} h\norm[L^2(\domain)]{u}
		$
		for all $(u, \xi) \in L^2(\domain) \times \Xi$,
		where
		$$\oldconstant{H1h} = (\rdc_{\max}^{3/2}/\rdc_{\min}^{9/2})
		\rdc_{\max,1}^2
		= \oldconstant{H2}(\rdc_{\max}/\rdc_{\min})^{1/2}.$$
		
		\item 
		\label{itm:2020-11-08T15:32:48.054_5}
		$
		\norm[L^2(\domain)]{S(u,\xi)-S_h(u, \xi)}
		\leq C_Y^2 C_{H^2}^2 \newconstant{\label{L2h2}}
		h^2\norm[L^2(\domain)]{u} 
		$
		for all $(u, \xi) \in L^2(\domain) \times \Xi$,
		where
		\begin{align*}
			\oldconstant{L2h2} = (\rdc_{\max}^{7/2}/\rdc_{\min}^{17/2}) 
			\rdc_{\max,1}^4 = \rdc_{\max}\oldconstant{H2}\oldconstant{H1h}.
		\end{align*}

		\item 
		$\norm[L^2(\domain)]{\nabla \erpobjhN(u^*)-\nabla \hat{\erpobj}_{N}(u^*)}
		\leq 
		\newconstant{\label{gradL2h2}} h^2\big(
		(C_\domain^2/\rdc_{\min})
		\norm[L^2(\domain)]{u^*} + \norm[L^2(\domain)]{y_d}\big)$, 
		where 
		$$
		\oldconstant{gradL2h2} = 	
		2C_Y^2 C_{H^2}^2 (\rdc_{\max}^{7/2}/\rdc_{\min}^{17/2}) 
		\rdc_{\max,1}^4 = 
		2C_Y^2 C_{H^2}^2  \oldconstant{L2h2}.
		$$
	\end{enumerate}
\end{lemma}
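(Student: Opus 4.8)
The plan is to establish the four assertions in the order stated, since each relies on its predecessors; part~(a) is the foundation from which the explicit $\rdc$-dependence of all later constants is inherited. For part~(a) I would invoke the elliptic regularity theory for $-\nabla\cdot(\rdc(\xi)\nabla\,\cdot\,)$ on a convex polyhedral domain with a $C^1(\bar{\domain})$-coefficient, which under \Cref{ass:domainkappa} is available from \cite{Ali2017,Charrier2013}: for $u \in L^2(\domain)$ the right-hand side of the PDE lies in $L^2(\domain)$, so $S(u,\xi) \in H^2(\domain)$ together with a quantitative bound. The genuine work here is bookkeeping of constants, namely tracking how the coercivity bound $\rdc_{\min}$, the uniform bound $\rdc_{\max}$, and the $C^1$-bound $\rdc_{\max,1}$ on $\rdc(\xi)$ enter the regularity constant, so as to split off the abstract, $\rdc$-independent factor $C_{H^2}$ and to collect the remaining dependence into $\oldconstant{H2} = (\rdc_{\max}/\rdc_{\min}^4)\rdc_{\max,1}^2$.

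For parts~(b) and (c) I would use the standard pair of finite element arguments, taking care to extract the sharp $\rdc$-exponents. Part~(b) is a \cea-type estimate carried out in the $\rdc(\xi)$-weighted energy norm $\|v\|_a^2 = \innerp{L^2(\domain)^d}{\rdc(\xi)\nabla v}{\nabla v}$: since $S_h(u,\xi)$ is the Galerkin approximation of $S(u,\xi)$ in $\ssph$, Galerkin orthogonality makes $S_h(u,\xi)$ the best energy-norm approximation, and combining $\rdc_{\min}\eqeqnorm[H^1(\domain)]{\cdot}^2 \le \|\cdot\|_a^2 \le \rdc_{\max}\eqeqnorm[H^1(\domain)]{\cdot}^2$ with \Cref{ass:2020-11-08T15:31:39.527} and part~(a) yields $\eqeqnorm[H^1(\domain)]{S(u,\xi)-S_h(u,\xi)} \le (\rdc_{\max}/\rdc_{\min})^{1/2}C_\ssp h\norm[H^2(\domain)]{S(u,\xi)}$; the energy-norm route is what produces the exponent $(\rdc_{\max}/\rdc_{\min})^{1/2}$, matching $\oldconstant{H1h} = (\rdc_{\max}/\rdc_{\min})^{1/2}\oldconstant{H2}$. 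Part~(c) is an Aubin--Nitsche duality argument: writing $e = S(u,\xi)-S_h(u,\xi)$ and using that $A(\xi)$ is self-adjoint (\Cref{lem:saa:2020-11-08T17:14:08.485}), I would introduce the dual solution $w = S(e,\xi)$ solving $A(\xi)w = e$, for which part~(a) gives $\norm[H^2(\domain)]{w} \le C_{H^2}\oldconstant{H2}\norm[L^2(\domain)]{e}$; Galerkin orthogonality then yields $\norm[L^2(\domain)]{e}^2 = \innerp{L^2(\domain)^d}{\rdc(\xi)\nabla(w-w_h)}{\nabla e}$ for every $w_h \in \ssph$, and choosing $w_h$ as the best approximation of $w$, bounding the bilinear form by its continuity constant $\rdc_{\max}$, and inserting parts~(a) and (b) produces the factor $h^2$ after dividing by $\norm[L^2(\domain)]{e}$ (the case $e=0$ being trivial), with $\oldconstant{L2h2} = \rdc_{\max}\oldconstant{H2}\oldconstant{H1h}$.

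For part~(d) I would first use \Cref{lem:differentiability}: since $B^*v = v$ and $B_h^*v_h = v_h$ as elements of $L^2(\domain)$, the gradient formulas collapse to $\nabla\erpobjhN(u^*)-\nabla\hat{\erpobj}_N(u^*) = \tfrac{1}{N}\sum_{i=1}^N\big(z(u^*,\xi^i)-z_h(u^*,\xi^i)\big)$, so by the triangle inequality it suffices to bound $\norm[L^2(\domain)]{z(u^*,\xi)-z_h(u^*,\xi)}$ uniformly in $\xi$. Substituting $z(u^*,\xi) = -S(S(u^*,\xi)-y_d,\xi)$ and $z_h(u^*,\xi) = -S_h(S_h(u^*,\xi)-y_d,\xi)$ and adding and subtracting $S_h(S(u^*,\xi)-y_d,\xi)$, the difference splits into $-(S-S_h)(S(u^*,\xi)-y_d,\xi)$ and $-S_h\big((S-S_h)(u^*,\xi),\xi\big)$; I would bound the first by part~(c) with right-hand side $S(u^*,\xi)-y_d$ and the second by the $L^2(\domain)$-stability of $S_h$ (\Cref{lem:saa:2020-11-08T17:14:08.485_2}) followed again by part~(c). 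The estimate $\norm[L^2(\domain)]{S(u^*,\xi)-y_d} \le (C_\domain^2/\rdc_{\min})\norm[L^2(\domain)]{u^*}+\norm[L^2(\domain)]{y_d}$ (\Cref{lem:saa:2020-11-08T17:14:08.485}) supplies the right-hand side, and the factor $2$ in $\oldconstant{gradL2h2} = 2C_\ssp^2 C_{H^2}^2\oldconstant{L2h2}$ absorbs both contributions.

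The conceptual crux is part~(a): because (b)--(d) merely propagate its constant through otherwise routine finite element and stability inequalities, the delicate point is extracting the precise $\rdc$-dependent constant $\oldconstant{H2}$ from the cited regularity theory. Within (b)--(d) the only step demanding real care is running the \cea-type estimate in the weighted energy norm in part~(b), since obtaining $(\rdc_{\max}/\rdc_{\min})^{1/2}$ rather than the cruder $(\rdc_{\max}/\rdc_{\min})$ there is exactly what makes the chained constants $\oldconstant{L2h2}$ and $\oldconstant{gradL2h2}$ come out as claimed.
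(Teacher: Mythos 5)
Your proposal is correct and takes essentially the same approach as the paper: part (a) rests on the cited elliptic regularity result, part (b) is the C\'ea/energy-norm argument underlying the cited proof of Charrier et al., part (c) is the Aubin--Nitsche duality argument underlying the cited proof of Ali et al.\ (both of which the paper invokes by reference rather than spelling out), and your splitting in part (d) into $S_h(S_h(u^*,\xi)-y_d,\xi)-S_h(S(u^*,\xi)-y_d,\xi)$ and $S_h(S(u^*,\xi)-y_d,\xi)-S(S(u^*,\xi)-y_d,\xi)$, handled by $L^2$-stability of $S_h$ and part (c) respectively, is exactly the paper's decomposition. The constants $\oldconstant{H1h}$, $\oldconstant{L2h2}$, and $\oldconstant{gradL2h2}$ all come out as claimed.
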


\begin{proof}
	\textnormal{(a)} Applying \cite[Thm.\ 3.1]{Ali2017}, we obtain the
	statements.
	
	\textnormal{(b)}
	The assertion follows  from the
	proof of \cite[Thm.\ 3.9]{Charrier2013}. 
	\cea's lemma used in the
	proof of \cite[Thm.\ 3.9]{Charrier2013}
	remains valid  under our assumptions
	because $\ssph$ is a closed subspace of $H_0^1(\domain)$.

	\textnormal{(c)} The assertion follows  from 
	the arguments used in the proof of \cite[Thm.\ 4.4]{Ali2017}, 
	but we apply \Cref{ass:domainkappa}
	instead of 
	\cite[Lem.\ 4.3]{Ali2017} to establish the assertion.
	The Galerkin orthogonality used in the proof of 
	\cite[Thm.\ 4.4]{Ali2017} is valid under our assumptions 
	because $\ssph$ is a closed subspace of $H_0^1(\domain)$.
	
	\textnormal{(d)}
	Using \Cref{lem:saa:2020-11-08T17:14:08.485,%
		lem:saa:2020-11-08T17:14:08.485_2,%
		lem:differentiability}, 
	we find that for all $v \in L^2(\domain)$,
	\begin{align*}
		& \inner[L^2(\domain)]
		{ \nabla \erpobjhN(u^*)-\nabla \hat{\erpobj}_{N}(u^*)}{v}
		\\
		&\quad 
		= \frac{1}{N} \sum_{i=1}^N
		\big( S_h(S_h(u^*, \xi^i)-y_d, \xi^i)
		-S(S(u^*, \xi^i)-y_d, \xi^i), v\big)_{L^2(\domain)}.	
	\end{align*}
	We separately estimate
	$S_h(S(u^*, \xi)-y_d, \xi)-S(S(u^*, \xi)-y_d, \xi)$
	and		
	$S_h(S_h(u^*, \xi)-y_d, \xi)-S_h(S(u^*, \xi)-y_d, \xi)$
	for $\xi \in \Xi$.
	Using part~\ref{itm:2020-11-08T15:32:48.054_5}
	and \Cref{lem:saa:2020-11-08T17:14:08.485_2}, we  find
	that
	\begin{align*}
		\norm[L^2(\domain)]
		{S_h(S_h(u^*, \xi)-y_d, \xi)-S_h(S(u^*, \xi)-y_d, \xi)}
		& \leq 
		(C_\domain^2/\rdc_{\min})
		\norm[L^2(\domain)]{S_h(u^*, \xi)-S(u^*, \xi)}
		\\
		&\leq 
		(C_\domain^2/\rdc_{\min}) C_Y^2 C_{H^2}^2 \oldconstant{L2h2}
		h^2\norm[L^2(\domain)]{u^*}.
	\end{align*}
	Using part~\ref{itm:2020-11-08T15:32:48.054_5}
	and \Cref{lem:saa:2020-11-08T17:14:08.485}, we further  have
	\begin{align*}
		\norm[L^2(\domain)]
		{S_h(S(u^*, \xi)-y_d, \xi)-S(S(u^*, \xi)-y_d, \xi)}
		& \leq  C_Y^2 C_{H^2}^2  \oldconstant{L2h2}
		h^2\norm[L^2(\domain)]{S(u^*, \xi)-y_d}.
	\end{align*}
	The triangle inequality
	and \Cref{lem:saa:2020-11-08T17:14:08.485} also yield
	$$
	\norm[L^2(\domain)]{S(u^*, \xi)-y_d}
	\leq 
	(C_\domain^2/\rdc_{\min})
	\norm[L^2(\domain)]{u^*}
	+
	\norm[L^2(\domain)]{y_d}.
	$$
	Putting together the pieces, we obtain the assertion.
\end{proof}

We collect further basic error estimates
based on those established in \cite{Martin2021,Martin2021a,Wachsmuth2011}.

\begin{lemma}
	\label{lem:basicerror_2}
	Let \Cref{ass:domainkappa,ass:2020-11-08T15:31:39.527,%
		ass:csph,ass:2020-01-31T16:12:16.968}
	hold and let $h \in (0,1)$.
	Then the following statements hold.
	\begin{enumerate}[nosep,wide,leftmargin=*,before={\parskip=0pt}]
		\item  
		We have
		$
		\norm[L^2(\domain)]{\nabla \erpobjhN(\quasiinter u^*)-
			\nabla \erpobjhN( u^*)}
		\leq 
		\big(\alpha + \tfrac{C_\domain^4}{\rdc_{\min}^2}\big)
		\norm[L^2(\domain)]{\quasiinter u^*-u^*}
		$.
		\item It holds that
		$|\innerp{L^2(\domain)}{\nabla \erpobj(u^*)}
		{\quasiinter u^*-u^*}|
		\leq C_{\csp} h^2 \norm[H^1(\domain)]{\nabla \erpobj(u^*)}
		\norm[H^1(\domain)]{u^*}
		$.
	\end{enumerate}
\end{lemma}

\begin{proof}
	\textnormal{(a)}
	The estimate can be established using computations
	similar to those used to prove \cite[Lem.\ 3.5]{Martin2021}
	(see also \cite[pp.\ 987 and 989, and Lem.\ 2.2]{Martin2021a}).
	
	\textnormal{(b)}
	Since 
	$H^1(\domain) \embedding L^2(\domain) \embedding H^1(\domain)^*$
	is a Gelfand triple \cite[p.\ 147]{Troeltzsch2010a}, 
	the embedding $L^2(\domain) \embedding H^1(\domain)^*$
	is given by
	$\dualp[H^1(\domain)]{v}{w} = \innerp{L^2(\domain)}{v}{w}$
	for all $v \in L^2(\domain)$
	and $w \in H^1(\domain)$
	\cite[Rem.\ 1.17]{Hinze2009}.
	Combined with 	
	$u^*$, $\nabla \erpobj(u^*) \in H^1(\domain)$
	(see \Cref{lem:saa:H1reg}), and
	$\quasiinter u^* \in \adcsph \subset L^2(\domain)$
	(see \Cref{lem:saa:collectionquasiinter}),
	we have
	$$|\innerp{L^2(\domain)}{\nabla \erpobj(u^*)}
	{\quasiinter u^*-u^*}|
	\leq \norm[H^1(\domain)]{\nabla \erpobj(u^*)}
	\norm[H^1(\domain)^*]{\quasiinter u^*-u^*}.
	$$ 
	Together with 
	\Cref{ass:2020-01-31T16:12:16.968}, we obtain the 
	estimate.
\end{proof}

\Cref{prop:saa:2020-11-14T21:32:40.594} establishes an upper bound on
$\alpha \norm[L^2(\domain)]{\quasiinter u^* - \uhN}^2$.

\begin{proposition}
	\label{prop:saa:2020-11-14T21:32:40.594}
	If \Cref{ass:domainkappa,ass:2020-11-08T15:31:39.527,%
		ass:csph,ass:2020-01-31T16:12:16.968} hold,
	then 
	\begin{align}
		\label{eq:saa:2020-11-07T23:12:24.196}
		\begin{aligned}
			\alpha \norm[L^2(\domain)]{\quasiinter u^* - \uhN}^2
			&\leq 
			\innerp{L^2(\domain)}{\nabla \erpobjhN(\quasiinter u^*)-
				\nabla \erpobjhN( u^*)}
			{\quasiinter u^*-\uhN}
			\\
			& \quad +
			\innerp{L^2(\domain)}{\nabla \erpobjhN(u^*)-\nabla \hat{\erpobj}_{N}(u^*)}
			{\quasiinter u^*-\uhN}
			\\
			& \quad +
			\innerp{L^2(\domain)}{\nabla \hat{\erpobj}_N(u^*)-\nabla \erpobj(u^*)}
			{\quasiinter u^* - \uhN}
			\\
			& \quad+
			\innerp{L^2(\domain)}{\nabla \erpobj(u^*)}
			{\quasiinter u^*-u^*}.
		\end{aligned}
	\end{align}
\end{proposition}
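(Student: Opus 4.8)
The plan is to exploit the strong convexity of $\erpobjhN$ together with the two variational inequalities in \eqref{eq:cfonoc}. By \Cref{lem:differentiability}, $\erpobjhN$ is strongly convex with parameter $\alpha$, so the associated monotonicity estimate applied to the pair $\quasiinter u^*$ and $\uhN$ gives
\begin{align*}
\alpha \norm[L^2(\domain)]{\quasiinter u^* - \uhN}^2
\leq
\innerp{L^2(\domain)}{\nabla \erpobjhN(\quasiinter u^*)
- \nabla \erpobjhN(\uhN)}{\quasiinter u^* - \uhN}.
\end{align*}
This reduces the claim to bounding the right-hand side above by the four inner products appearing in \eqref{eq:saa:2020-11-07T23:12:24.196}.

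Next I would eliminate the gradient evaluated at the SAA solution $\uhN$. The second inequality in \eqref{eq:cfonoc} rearranges to $-\innerp{L^2(\domain)}{\nabla \erpobjhN(\uhN)}{\quasiinter u^* - \uhN} \leq \gamma \norm[L^1(\domain)]{\quasiinter u^*} - \gamma \norm[L^1(\domain)]{\uhN}$, whence
\begin{align*}
\alpha \norm[L^2(\domain)]{\quasiinter u^* - \uhN}^2
\leq
\innerp{L^2(\domain)}{\nabla \erpobjhN(\quasiinter u^*)}{\quasiinter u^* - \uhN}
+ \gamma \norm[L^1(\domain)]{\quasiinter u^*}
- \gamma \norm[L^1(\domain)]{\uhN}.
\end{align*}
I would then insert the telescoping decomposition $\nabla \erpobjhN(\quasiinter u^*) = [\nabla \erpobjhN(\quasiinter u^*) - \nabla \erpobjhN(u^*)] + [\nabla \erpobjhN(u^*) - \nabla \hat{\erpobj}_N(u^*)] + [\nabla \hat{\erpobj}_N(u^*) - \nabla \erpobj(u^*)] + \nabla \erpobj(u^*)$ and pair each bracket with $\quasiinter u^* - \uhN$. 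The three brackets reproduce the first three inner products of \eqref{eq:saa:2020-11-07T23:12:24.196}, leaving the residual $\innerp{L^2(\domain)}{\nabla \erpobj(u^*)}{\quasiinter u^* - \uhN}$ together with the two $L^1$ contributions.

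It remains to absorb this residual into the fourth inner product. Writing $\quasiinter u^* - \uhN = (\quasiinter u^* - u^*) + (u^* - \uhN)$ and applying the first inequality in \eqref{eq:cfonoc} in the form $\innerp{L^2(\domain)}{\nabla \erpobj(u^*)}{u^* - \uhN} \leq \gamma \norm[L^1(\domain)]{\uhN} - \gamma \norm[L^1(\domain)]{u^*}$, the terms $\gamma \norm[L^1(\domain)]{\uhN}$ cancel and I am left with $\innerp{L^2(\domain)}{\nabla \erpobj(u^*)}{\quasiinter u^* - u^*} + \gamma \norm[L^1(\domain)]{\quasiinter u^*} - \gamma \norm[L^1(\domain)]{u^*}$. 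Since \Cref{lem:saa:collectionquasiinter} gives $\norm[L^1(\domain)]{\quasiinter u^*} \leq \norm[L^1(\domain)]{u^*}$, the remaining $L^1$ difference is nonpositive and may be discarded, yielding precisely \eqref{eq:saa:2020-11-07T23:12:24.196}.

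The derivation is essentially mechanical, and the only genuine subtlety is the bookkeeping of the nonsmooth $\gamma \norm[L^1(\domain)]{\cdot}$ terms: the $L^1$ norm of $\uhN$ enters with opposite signs through the two variational inequalities and must cancel, while the surviving $L^1$ norm of $\quasiinter u^*$ must be controlled by that of $u^*$ via the contraction property of the quasi-interpolation operator. Keeping these signs straight is the main thing to watch; no estimate beyond strong convexity, the optimality conditions \eqref{eq:cfonoc}, and \Cref{lem:saa:collectionquasiinter} is needed.
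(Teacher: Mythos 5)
Your proof is correct and uses exactly the same ingredients as the paper's own argument: the strong convexity of $\erpobjhN$, the two variational inequalities in \eqref{eq:cfonoc}, the telescoping gradient decomposition, and the $L^1$-contraction of $\quasiinter$ from \Cref{lem:saa:collectionquasiinter}. The only difference is cosmetic---the paper adds the two inequalities in \eqref{eq:cfonoc} and discards the $\gamma$-terms at the outset, whereas you carry them through and cancel them at the end---so this is essentially the same proof.
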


\begin{proof}
	The proof is inspired 
	by the arguments used in
	\cite[Thm.\ 5.2]{Meidner2008}.
	Adding the inequalities in \eqref{eq:cfonoc} and using
	$\norm[L^1(\domain)]{\quasiinter u^*} \leq \norm[L^1(\domain)]{u^*}$
	(see \Cref{lem:saa:collectionquasiinter})
	yields
	\begin{align}
		\label{eq:saa:2020-11-07T22:25:52.42}
		0 \leq 
		& \innerp{L^2(\domain)}{\nabla \erpobj(u^*)}
		{\uhN-u^*} 
		+\innerp{L^2(\domain)}{\nabla \erpobjhN(\uhN)}
		{\quasiinter u^* -\uhN}.
	\end{align}
	Since $\erpobjhN$ is strongly convex
	with parameter $\alpha > 0$
	and \gateaux\ differentiable on $L^2(\domain)$
	(see \Cref{lem:differentiability}),
	we have
	\begin{align*}
		\alpha \norm[L^2(\domain)]{\uhN-\quasiinter u^*}^2
		\leq 
		&\innerp{L^2(\domain)}{\nabla \erpobjhN(\quasiinter u^*)-
			\nabla \erpobjhN(\uhN)}
		{\quasiinter u^*-\uhN}.
	\end{align*}
	Adding this inequality 
	and the estimate \eqref{eq:saa:2020-11-07T22:25:52.42}, 
	we conclude that	
	\begin{align*}
		\alpha \norm[L^2(\domain)]{\uhN-\quasiinter u^*}^2
		&\leq 
		\innerp{L^2(\domain)}{\nabla \erpobjhN(\quasiinter u^*)}
		{\quasiinter u^*-\uhN}
		+ \innerp{L^2(\domain)}{\nabla \erpobj(u^*)}
		{\uhN-u^*}
		\\
		& = 
		\innerp{L^2(\domain)}{\nabla \erpobjhN(\quasiinter u^*)}
		{\quasiinter u^*-\uhN}
		+ \innerp{L^2(\domain)}{\nabla \erpobj(u^*)}
		{\quasiinter u^*-u^*}
		\\
		& \quad + \innerp{L^2(\domain)}{\nabla \erpobj(u^*)}
		{\uhN-\quasiinter u^*}.
	\end{align*}
	Manipulating the terms in the right-hand side, we obtain
	\eqref{eq:saa:2020-11-07T23:12:24.196}.
\end{proof}

The gradients in \eqref{eq:saa:2020-11-07T23:12:24.196}
are evaluated at the deterministic
controls $u^*$ and $\quasiinter u^*$, but not at the
random control $\uhN$. This fact is used for our error estimation.
Furthermore, under the hypotheses
of \Cref{prop:saa:2020-11-14T21:32:40.594}, the estimate
\eqref{eq:saa:2020-11-07T23:12:24.196} yields
\begin{align}
	\label{eq:simpleerrorbound}
	\begin{aligned}
		\alpha \norm[L^2(\domain)]{\quasiinter u^* - \uhN}^2
		\leq 
		& \norm[L^2(\domain)]{\nabla \erpobjhN(\quasiinter u^*)-
			\nabla \erpobj( u^*)}
		\norm[L^2(\domain)]{\quasiinter u^*-\uhN}
		\\
		& \quad+
		\innerp{L^2(\domain)}{\nabla \erpobj(u^*)}{\quasiinter u^*-u^*}.
	\end{aligned}
\end{align}
Hence $\norm[L^2(\domain)]{\quasiinter u^* - \uhN}$
can be estimated using 
bounds on $\norm[L^2(\domain)]{\nabla \erpobjhN(\quasiinter u^*)-
	\nabla \erpobj( u^*)}$
and $|\innerp{L^2(\domain)}{\nabla \erpobj(u^*)}
{\quasiinter u^*-u^*}|^{1/2}$.

\subsection{Exponential tail bounds for SAA gradients}
\label{subsec:exponentialtailbound}

We establish an exponential tail bound
for $\nabla \hat{\erpobj}_N(u^*)-\nabla \erpobj(u^*)$
using the large deviation-type bound  established in  
\cite[Thm.\  3]{Pinelis1992} (see also \cite[Thm.\ 3.5]{Pinelis1994}).
Since
$\nabla \hat{\erpobj}_N(u^*)-\nabla \erpobj(u^*)
= B^* \big(\cE{z(u^*,\xi)}- (1/N)\sum_{i=1}^N z(u^*,\xi^i)\big)$, 
the accuracy and reliability of 
$\nabla \hat{\erpobj}_N(u^*)$ as an estimator
for $\nabla \erpobj(u^*)$
is determined by that of the 
adjoint state's sample average.	
Let $\tau$ be positive scalar. 
Suppose that $\rv_1, \rv_2, \ldots$
are independent, mean-zero $L^2(\domain)$-valued random
vectors defined on a common probability space such that for each
$i \in \natural$,
with probability one, $\norm[L^2(\domain)]{\rv_i} \leq \tau$. 
Then for each $N \in \natural$,
\begin{align}
	\label{eq:saa:bOgutQ6IqE}
	\Prob{\norm[L^2(\domain)]{\rv_1+\cdots+\rv_N}\geq \varepsilon}
	\leq 2\exp(-\varepsilon^2/(2N\tau^2))
	\quad \tfa \quad \varepsilon > 0;
\end{align}
see  \cite[Thm.\  3]{Pinelis1992} and \cite[Thm.\ 3.5]{Pinelis1994}.
Using \eqref{eq:saa:bOgutQ6IqE}, we establish a
tail bound for SAA gradients.
\begin{lemma}
	\label{thm:saa:bOgutQ6IqE}
	Let $\delta \in (0,1)$ and let $\varepsilon > 0$.
	If \Cref{ass:domainkappa} holds 
	and $N \geq 2\ln(2/\delta)/\varepsilon^2$, 
	then with a probability of at least $1-\delta$,
	\begin{align*}
		\norm[L^2(\domain)]{\nabla \hat{\erpobj}_N(u^*)-\nabla \erpobj(u^*)}
		< \varepsilon\tau,
		\quad \text{where} \quad 
		\tau =  \tfrac{2C_\domain^2}{\rdc_{\min}}
		\big(\tfrac{C_\domain^2}{\rdc_{\min}}
		\norm[L^2(\domain)]{u^*}+\norm[L^2(\domain)]{y_d}\big).
	\end{align*}
\end{lemma}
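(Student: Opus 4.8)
The plan is to represent the gradient difference as a normalized sum of independent, mean-zero, uniformly bounded $L^2(\domain)$-valued random vectors and then invoke the large deviation bound \eqref{eq:saa:bOgutQ6IqE}.

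First I would use the gradient formulas from \Cref{lem:differentiability}\ref{itm:gradients} together with the identity $B^*v = v$ valid for $v \in H_0^1(\domain)$ (established in the proof of \Cref{lem:differentiability}) to write $\nabla \hat{\erpobj}_N(u^*)-\nabla \erpobj(u^*) = -(1/N)\sum_{i=1}^N (z(u^*,\xi^i)-\cE{z(u^*,\xi)})$; note $z(u^*,\xi^i) \in H_0^1(\domain)$ and $\cE{z(u^*,\xi)} \in H_0^1(\domain)$ by \Cref{lem:differentiability}, so $B^*$ acts as the identity on the relevant argument. Setting $\rv_i = -(z(u^*,\xi^i)-\cE{z(u^*,\xi)})$, the fact that $z$ is a \Caratheodory\ mapping and that the $\xi^i$ are i.i.d.\ copies of $\xi$ ensures the $\rv_i$ are independent, mean-zero, $L^2(\domain)$-valued random vectors with $\nabla \hat{\erpobj}_N(u^*)-\nabla \erpobj(u^*) = (1/N)(\rv_1+\cdots+\rv_N)$.

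Next I would establish the almost-sure uniform bound $\norm[L^2(\domain)]{\rv_i} \leq \tau$. Combining the adjoint-state estimate $\eqeqnorm[H^1(\domain)]{z(u^*,\xi)} \leq (C_\domain/\rdc_{\min})((C_\domain^2/\rdc_{\min})\norm[L^2(\domain)]{u^*}+\norm[L^2(\domain)]{y_d})$ from \Cref{lem:saa:2020-11-08T17:14:08.485} (part~\ref{itm:2020-11-08T19:06:13.137_2}) with \friedrichs' inequality $\norm[L^2(\domain)]{v} \leq C_\domain \eqeqnorm[H^1(\domain)]{v}$ yields $\norm[L^2(\domain)]{z(u^*,\xi)} \leq \tau/2$ for every $\xi$. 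Jensen's inequality for the Bochner integral gives the same bound for $\norm[L^2(\domain)]{\cE{z(u^*,\xi)}}$, so the triangle inequality produces $\norm[L^2(\domain)]{\rv_i} \leq \tau$, matching the stated $\tau$.

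Finally I would apply \eqref{eq:saa:bOgutQ6IqE} with per-vector bound $\tau$ and threshold $N\varepsilon\tau$:
\begin{align*}
\Prob{\norm[L^2(\domain)]{\nabla \hat{\erpobj}_N(u^*)-\nabla \erpobj(u^*)} \geq \varepsilon\tau}
&= \Prob{\norm[L^2(\domain)]{\rv_1+\cdots+\rv_N} \geq N\varepsilon\tau} \\
&\leq 2\exp\big(-(N\varepsilon\tau)^2/(2N\tau^2)\big) = 2\exp(-N\varepsilon^2/2).
\end{align*}
The hypothesis $N \geq 2\ln(2/\delta)/\varepsilon^2$ forces $2\exp(-N\varepsilon^2/2) \leq \delta$, and passing to the complementary event gives the claim with the strict inequality holding with probability at least $1-\delta$. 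The only delicate points are the careful bookkeeping of the factor $1/N$ between the normalized gradient difference and the unnormalized sum appearing in \eqref{eq:saa:bOgutQ6IqE}, and verifying the measurability and integrability that legitimize treating the $\rv_i$ as mean-zero i.i.d.\ summands; the latter is supplied directly by \Cref{lem:differentiability}.
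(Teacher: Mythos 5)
Your proposal is correct and follows essentially the same route as the paper's proof: both reduce the gradient difference to a normalized sum of i.i.d.\ mean-zero $L^2(\domain)$-valued random vectors, bound each summand by $\tau$ via the adjoint-state estimate of \Cref{lem:saa:2020-11-08T17:14:08.485} together with \friedrichs' inequality, and conclude with the large deviation bound \eqref{eq:saa:bOgutQ6IqE} at threshold $N\varepsilon\tau$. If anything, your explicit cancellation of the $\alpha u^*$ terms when forming $\rv_i = -(z(u^*,\xi^i)-\cE{z(u^*,\xi)})$ is slightly tidier bookkeeping than the paper's definition $\rv_i = \nabla\rpobj(u^*,\xi^i)-\nabla\erpobj(u^*)$, which only has zero mean after that same cancellation is made implicitly.
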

\begin{proof}
	The random vectors
	$\rv_i = \nabla_u \rpobj(u^*,\xi^i) - \nabla \erpobj(u^*)$
	are independent
	because $\xi^1, \xi^2, \ldots$ are independent identically distributed
	$\Xi$-valued random elements and $\nabla_u\rpobj(u^*,\cdot)$ is measurable.
	\Cref{lem:saa:2020-11-08T17:14:08.485,lem:differentiability} ensure that
	each $\rv_i$ is integrable and has zero mean. 
	Using 	\Cref{lem:differentiability,lem:saa:2020-11-08T17:14:08.485}
	and \friedrichs' inequality, we have for each $\xi \in \Xi$, 
	\begin{align*}
		\norm[L^2(\domain)]
		{\nabla_u \rpobj(u^*, \xi)-\nabla \erpobj(u^*)}
		& =
		\norm[L^2(\domain)]
		{B^*z(u^*, \xi)-B^*\cE{z(u^*, \xi)}}
		\\
		\nonumber 
		& \leq 
		C_\domain\eqeqnorm[H^1(\domain)]
		{z(u^*, \xi)}
		+C_\domain \cE{\eqeqnorm[H^1(\domain)]{z(u^*, \xi)}}
		\\
		\nonumber
		& \leq \tfrac{2C_\domain^2}{\rdc_{\min}}
		\big(\tfrac{C_\domain^2}{\rdc_{\min}}
		\norm[L^2(\domain)]{u^*}+\norm[L^2(\domain)]{y_d}\big)
		= \tau. 
	\end{align*}
	Hence for each
	$i \in \natural$, 
	with probability one, $\norm[L^2(\domain)]{\rv_i} \leq \tau$.
	Using  \eqref{eq:saa:bOgutQ6IqE}
	and $N \geq 2\ln(2/\delta)/\varepsilon^2$, we obtain the assertion.
\end{proof}

The exponential tail bound \eqref{eq:saa:bOgutQ6IqE} remains
valid for $L^2(\domain)$-valued
random vectors other than essentially bounded ones.
If 	$\varrho > 0$ and $\rv_1, \rv_2, \ldots$
are independent, mean-zero $L^2(\domain)$-valued random
vectors such that for each
$i \in \natural$, 
$\cE{\exp(\varrho^{-2}\norm[L^2(\domain)]{\rv_i}^2)} 
\leq \eu$, 
then \eqref{eq:saa:bOgutQ6IqE} holds true with $\tau$ replaced
by a constant proportional to $\varrho$; see 
\cite[Thm.\ 3]{Pinelis1986} and \cite[Thm.\ 1]{Milz2021}.
While the condition 
$\cE{\exp(\varrho^{-2}\norm[L^2(\domain)]{\rv_i}^2)} \leq \eu$
allows for a larger class of random vectors than essentially bounded
ones, the random vectors 
$\rv_i$ considered in the proof of \Cref{thm:saa:bOgutQ6IqE}
generally violate this condition if the random field $\kappa$ is 
lognormal (see, e.g., \cite[p.\ 87]{Milz2021a}).
This fact has been our main motivation to work with 
\Cref{ass:domainkappa_2}.

In the proof of \Cref{thm:saa:bOgutQ6IqE}, 
we applied  \eqref{eq:saa:bOgutQ6IqE} to obtain a large deviation-type bound
for SAA gradients. This bound depends on the constant $\tau$
defined in \Cref{thm:saa:bOgutQ6IqE}
which provides an almost sure upper bound on $\norm[L^2(\domain)]
{\nabla_u \rpobj(u^*, \xi)-\nabla \erpobj(u^*)}$.
As an alternative to using \eqref{eq:saa:bOgutQ6IqE},
we may use the Bernstein-type inequality 
\cite[Cor.\ 1]{Pinelis1986} to analyze the exponential
tail behavior of SAA gradients. 
We refer the reader to \cite[pp.\  24 and 26]{Massart2007}
for  discussions on Bernstein's inequality for real-valued random variables.

\subsection{Proof of reliable error estimates}
In this section, we establish \Cref{prob:femsaa}.
\begin{proof}[Proof of \Cref{prob:femsaa}]
	\Cref{lem:saa:H1reg} ensures
	$u^* \in H^1(\domain)$. 
	Combined with the triangle inequality, 
	$\eqeqnorm[H^1(\domain)]{u^*} \leq \norm[H^1(\domain)]{u^*}$,
	and \Cref{ass:2020-01-31T16:12:16.968}, we have
	\begin{align}
		\label{eq:reliableestimate_a}
		\norm[L^2(\domain)]{\uhN-u^*}
		\leq C_\csp h \norm[H^1(\domain)]{u^*}
		+ \norm[L^2(\domain)]{\uhN-\quasiinter u^*},
	\end{align}
	where $\quasiinter$ is defined in 
	\eqref{eq:saa:quasiinter}.
	The first addend in \eqref{eq:reliableestimate_a} will contribute
	to the term in \eqref{eq:reliableestimatea}.
	
	In the following steps, we derive a bound on 
	$\norm[L^2(\domain)]{\uhN-\quasiinter u^*}$
	using
	\Cref{lem:saa:2020-11-08T17:14:08.485,lem:basicerror,%
		prop:saa:2020-11-14T21:32:40.594,thm:saa:bOgutQ6IqE,%
		lem:basicerror_2}.	Our
	derivations use the inequality
	$(\rho_1\rho_2)^{1/2} 
	\leq (2\rho_1\rho_2)^{1/2} \leq \rho_1+\rho_2$
	valid for all $\rho_1$, $\rho_2 \geq 0$.
	Using the Cauchy--Schwarz inequality, we find that
	\begin{align*}
		\begin{aligned}
			&|\innerp{L^2(\domain)}{\nabla \erpobjhN(u^*)-\nabla \hat{\erpobj}_{N}(u^*)}
			{\quasiinter u^*-\uhN}|^{1/2}
			\\& \quad \leq 
			(4/\alpha^{1/2})
			\norm[L^2(\domain)]{\nabla \erpobjhN(u^*)-\nabla \hat{\erpobj}_{N}(u^*)}
			+
			(\alpha^{1/2}/4)\norm[L^2(\domain)]{\quasiinter u^*-\uhN}, \;\;
			\tand 
			\\
			&|\innerp{L^2(\domain)}{\nabla \hat{\erpobj}_N(u^*)-\nabla \erpobj(u^*)}
			{\quasiinter u^* - \uhN}|^{1/2}
			\\& \quad \leq 
			(4/\alpha^{1/2})\norm[L^2(\domain)]{\nabla \hat{\erpobj}_N(u^*)-\nabla 
				\erpobj(u^*)}
			+
			(\alpha^{1/2}/4)\norm[L^2(\domain)]{\quasiinter u^* - \uhN}.
		\end{aligned}
	\end{align*}
	Furthermore, we have
	\begin{align*}
		&|\innerp{L^2(\domain)}{\nabla \erpobjhN(\quasiinter u^*)-
			\nabla \erpobjhN( u^*)}
		{\quasiinter u^*-\uhN}|^{1/2}
		\\& \quad \leq  (4/\alpha^{1/2})
		\norm[L^2(\domain)]{\nabla \erpobjhN(\quasiinter u^*)-
			\nabla \erpobjhN( u^*)}
		+ (\alpha^{1/2}/4)\norm[L^2(\domain)]{\quasiinter u^*-\uhN}.
	\end{align*}
	Combined with \eqref{eq:saa:2020-11-07T23:12:24.196}, 
	and $(\rho_1+\rho_2)^{1/2} \leq \rho_1^{1/2}+ \rho_2^{1/2}$ 
	valid for all $\rho_1$, $\rho_2 \geq 0$,
	we conclude that
	\begin{align}
		\label{eq:reliableestimate_b}
		\begin{aligned}
			(1/4)\norm[L^2(\domain)]{\uhN-\quasiinter u^*}
			& \leq 
			(4/\alpha)\norm[L^2(\domain)]{\nabla \erpobjhN(\quasiinter u^*)-
				\nabla \erpobjhN( u^*)}
			\\
			& \quad +
			(4/\alpha)
			\norm[L^2(\domain)]{\nabla \erpobjhN(u^*)-\nabla \hat{\erpobj}_{N}(u^*)}
			\\
			& \quad +
			(4/\alpha)\norm[L^2(\domain)]{\nabla \hat{\erpobj}_N(u^*)-\nabla 
				\erpobj(u^*)}
			\\
			& \quad +
			(1/\alpha^{1/2})|\innerp{L^2(\domain)}{\nabla \erpobj(u^*)}
			{\quasiinter u^*-u^*}|^{1/2}.
		\end{aligned}
	\end{align}
	Using 	
	\Cref{lem:basicerror_2,ass:2020-01-31T16:12:16.968}, 
	we obtain
	\begin{align}
		\label{eq:reliableestimate_bbbbb}
		\norm[L^2(\domain)]{\nabla \erpobjhN(u^*)-\nabla \erpobjhN(u^*)}
		\leq C_\csp h \big(\alpha + \tfrac{C_\domain^4}{\rdc_{\min}^2}\big)
		\norm[H^1(\domain)]{u^*}.
	\end{align}
	This estimate
	will contribute to the term in \eqref{eq:reliableestimatea}. 
	Using \Cref{lem:basicerror}, we obtain
	the bound 
	\begin{align}
		\label{eq:reliableestimate_bb}
		\norm[L^2(\domain)]{\nabla \erpobjhN(u^*)-\nabla \hat{\erpobj}_{N}(u^*)}
		\leq \oldconstant{gradL2h2} h^2
		\big(\tfrac{C_\domain^2}{\rdc_{\min}}
		\norm[L^2(\domain)]{u^*} + \norm[L^2(\domain)]{y_d}\big)
		= \oldconstant{gradL2h2} h^2 \mathcal{C}^*
	\end{align}
	on the second addend in the right-hand side in 
	\eqref{eq:reliableestimate_b}.
	This error contribution will result in \eqref{eq:reliableestimated}.
	The constant $\mathcal{C}^*$ is defined in \eqref{eq:cstar}.
	
	We must yet derive bounds on the  third and fourth
	term in the right-hand side of \eqref{eq:reliableestimate_b}.
	Since 
	$N \geq 2\ln(2/\delta)/\varepsilon^2$, 
	\Cref{thm:saa:bOgutQ6IqE}
	ensures with a probability of at least $1-\delta$,
	\begin{align}
		\label{eq:reliableestimate_d}
		\norm[L^2(\domain)]{\nabla \hat{\erpobj}_N(u^*)-\nabla \erpobj(u^*)}
		<
		\tfrac{2C_\domain^2}{\rdc_{\min}}
		\big(\tfrac{C_\domain^2}{\rdc_{\min}}
		\norm[L^2(\domain)]{u^*}+\norm[L^2(\domain)]{y_d}\big) \varepsilon
		= 
		\tfrac{2C_\domain^2}{\rdc_{\min}}
		\mathcal{C}^* \varepsilon.
	\end{align}
	This error contribution will result in \eqref{eq:reliableestimateb}.
	\Cref{lem:basicerror_2} yields
	\begin{align}
		\label{eq:reliableestimate_dd}
		|\innerp{L^2(\domain)}{\nabla \erpobj(u^*)}
		{\quasiinter u^*-u^*}|^{1/2}
		\leq C_{\csp}^{1/2} h
		\norm[H^1(\domain)]{\nabla \erpobj(u^*)}^{1/2}
		\norm[H^1(\domain)]{u^*}^{1/2}.
	\end{align}
	Using \Cref{lem:saa:2020-11-08T17:14:08.485,lem:differentiability,lem:saa:H1reg}, we find that
	\begin{align*}
		\begin{aligned}
			\norm[H^1(\domain)]{\nabla \erpobj(u^*)}
			&\leq \alpha \norm[H^1(\domain)]{u^*}
			+ \norm[H^1(\domain)]{\cE{z(u^*, \xi)}}
			\\
			&\leq 
			\alpha \norm[H^1(\domain)]{u^*}
			+ \tfrac{(C_\domain+1)C_\domain}{\rdc_{\min}}
			\big(\tfrac{C_\domain^2}{\rdc_{\min}}
			\norm[L^2(\domain)]{u^*}+\norm[L^2(\domain)]{y_d}\big)
			\\
			& = 
			\alpha \norm[H^1(\domain)]{u^*}
			+ \tfrac{(C_\domain+1)C_\domain}{\rdc_{\min}}
			\mathcal{C}^*.
		\end{aligned}
	\end{align*}
	Hence
	\begin{align}
		\label{eq:reliableestimate_c}
		\begin{aligned}
			\tfrac{1}{\alpha^{1/2}}
			\norm[H^1(\domain)]{\nabla \erpobj(u^*)}^{1/2}
			\norm[H^1(\domain)]{u^*}^{1/2}
			& \leq \norm[H^1(\domain)]{u^*} + 
			(1/\alpha) \norm[H^1(\domain)]{\nabla \erpobj(u^*)} \\
			&\leq 2\norm[H^1(\domain)]{u^*}
			+ \tfrac{(C_\domain+1)C_\domain}{\alpha\rdc_{\min}}
			\mathcal{C}^*.
		\end{aligned}
	\end{align}
	This error contribution will result in \eqref{eq:reliableestimatec}
	and contribute to \eqref{eq:reliableestimatea}.
	Combining 
	\eqref{eq:reliableestimate_a},
	\eqref{eq:reliableestimate_b},
	\eqref{eq:reliableestimate_bbbbb},
	\eqref{eq:reliableestimate_bb},
	\eqref{eq:reliableestimate_d}, 
	\eqref{eq:reliableestimate_dd}, 
	and
	\eqref{eq:reliableestimate_c},
	we obtain
	\eqref{eq:reliableestimate}.
\end{proof}

\section{Numerical illustrations}
\label{sec:simulations}

We illustrate the theoretical results derived in \Cref{prob:femsaa}
(see also \eqref{eq:2021-02-22T15:31:07.382})
empirically on two instances of the optimal control problem
\eqref{eq:saa:lqcp}. Furthermore, we empirically verify
the expectation bound \eqref{eq:expecatation_bound}, which
is implied by \eqref{eq:2021-02-22T15:31:07.382}.
For each instance, 
we present graphical illustrations of an approximate solution
to the control problem \eqref{eq:saa:lqcp} (called ``reference solution'')
and of solutions to a nominal problem and to an ``expected diffusion''
problem associated with \eqref{eq:saa:lqcp}.

Before considering the instances, 
we discuss the setting used for numerical results.
Let $\domain = (0,1)^d$
with $d \in \{1,2\}$. We chose piecewise constant finite elements defined on
a regular triangulation of $[0,1]^d$
with $1/h \in \natural$ 
being the number of cells in each direction to discretize
the control space $L^2(\domain)$  and piecewise linear finite elements
with zero boundary conditions defined on the same triangulation 
to discretize the state space $H_0^1(\domain)$. 
We refer to $h$ as ``mesh width.''
The discretization ensures that the 
discretized feasible set $\adcsph = \adcsp \cap
\csph$ can be 
expressed as
$
\adcsph =  \{\, \sum_{j=1}^{n_h} u_j \phi_h^j \in \csph \colon \,
u_j \in \real, \; \lb \leq u_j \leq \ub, 
\; j = 1, \ldots, n_h\,
\}
$
\cite[Lem.\ 4.4]{Wachsmuth2011}.

To solve the SAA problems, we used \href{http://www.dolfin-adjoint.org/}
{\texttt{dolfin-adjoint}} \cite{Mitusch2019,Funke2013}
with \texttt{FEniCs} \cite{Alnes2015,Logg2012} 
and a semismooth Newton-CG method \cite{Mannel2020,Stadler2009,Ulbrich2011}
applied to a normal map reformulation of the
first-order optimality conditions
for \eqref{eq:saa:femsaa}. The implementation adapts
that of \href{https://github.com/funsim/moola}{\texttt{Moola}}'s 
\texttt{NewtonCG} \cite{Nordaas2016}. We chose
the zero control as an initial point for each SAA problem.

To empirically verify the expectation bound in \eqref{eq:expecatation_bound},
we approximate the expectation $\cE{\norm[L^2(\domain)]{\uhN-u^*}}$ using 
$48$ samples.  To demonstrate the reliable error estimate, we exploit
the fact that it implies a bound on the Luxemburg norm
of $\uhN-u^*$. Let $\hsp$ be a real, separable Hilbert space.
We define the Luxemburg norm $\luxemburg[\hsp]{2}{\cdot}$ 
of a random vector $Z : \Omega \to \hsp$ by
\begin{align}
	\label{eq:mlmc:2020-01-13T12:07:32.995}
	\luxemburg[\hsp]{2}{Z}
	= \inf_{\tau > 0} \, \{ \, \tau : \,
	\cE{\youngfun{\norm[\hsp]{Z}/\tau}} 
	\leq 1 \, \}
	= \inf_{\tau > 0} \, \{ \, \tau : \,
	\cE{\eu^{\norm[\hsp]{Z}^2/\tau^2}} 
	\leq 2 \, \},
\end{align} 
with the Young function $\youngfun{x}= \eu^{x^2}-1$. 
The Orlicz space 
$L_{\youngfun{\cdot}}(\Omega; \hsp) = 
L_{\youngfun{\norm[\hsp]{\cdot}}}(\Omega; \hsp)$
consists of all random vectors mapping from $\Omega$ to $\hsp$
with finite Luxemburg norm; see  \cite[Chap.\ 6]{Kosmol2011}.
The exponential tail bound 
\eqref{eq:2021-02-22T15:31:07.382} implies
\begin{align}
	\label{eq:2021-02-15T18:52:21.461}
	\luxemburg[L^2(\domain)]{2}{\uhN-u^*} 
	\leq
	3\sqrt{2}(c_1 h  + (c_2/\sqrt{N}));
\end{align}
see \cref{sec:tailluxemburg}. 
We approximate the expectation in \eqref{eq:mlmc:2020-01-13T12:07:32.995}
using the same samples used to estimate $\cE{\norm[L^2(\domain)]{\uhN-u^*}}$.
The simulations used to compute the SAA solutions
were performed on  PACE's Phoenix cluster \cite{PACE} with
Dual Intel Xeon Gold 6226 2.7 GHz CPUs.

Besides those for reference solutions,
we provide visualizations
of the solution to the nominal problem
\begin{align}
	\label{eq:nominalproblem}
	\min_{u_h\in\adcsph}\, 
	(1/2)\norm[L^2(\domain)]{S_{h}(u_h,\cE{\xi})-y_d}^2
	+(\alpha/2)\norm[L^2(\domain)]{u_h}^2
	+ \gamma \norm[L^1(\domain)]{u_h}.
\end{align}
and for the solution to the ``expected diffusion'' problem
\begin{align}
	\label{eq:avgproblem}
	\min_{u_h\in\adcsph}\, 
	(1/2)\norm[L^2(\domain)]{G_h(u_h)-y_d}^2
	+(\alpha/2)\norm[L^2(\domain)]{u_h}^2
	+ \gamma \norm[L^1(\domain)]{u_h},
\end{align}
where  for each $u \in L^2(\domain)$,  
$w_h = G_h(u) \in \ssph$ solves 
\begin{align*}
	\int_{\domain} \cE{\rdc(\xi)}
	\nabla w_{h} \cdot \nabla v_h \, \du x = 
	\int_{\domain} uv_h  \, \du x
	\quad \tfa \quad v_h \in \ssph.
\end{align*}
For the random diffusion coefficients
considered in \cref{subsect:1d,subsect:2d}, the expected value
$\cE{\kappa(\xi)(x)}$ can be computed explicitly 
for each $x \in \bar{\domain}$. We use this fact
for the numerical solution of \eqref{eq:avgproblem}.
To graphically visualize controls, we interpolate the piecewise constant 
controls to the discretized state space.

\subsection{One-dimensional equation}
\label{subsect:1d}
We consider an instance of the control
problem \eqref{eq:saa:lqcp} with $d=1$,
$\alpha =  0{.}001$, 
$\gamma = 0{.}01$,
$\lb = -3$,
$\ub = 3$,
$y_d(x) = -1/2$ if $x \in  (1/4, 3/4)$
and $y_d(x) = 1$ otherwise.
We chose $\Xi = [-1,1]^4$ and 
\begin{align}
	\label{eq:kappa:1d}
	\kappa_{\sigma}(\xi)(x) = 
	\exp\big(
	\sigma\xi_1 \cos(1.1\pi x)
	+\sigma\xi_2 \cos(1.2\pi x)
	+\sigma\xi_3 \sin(1.3\pi x)
	+\sigma\xi_4 \sin(1.4\pi x)
	\big),
\end{align}
where $\sigma > 0$ is a parameter;
cf.\ \cite[eq.\ (8.1)]{Martin2021}.
The random variables $\xi_1, \ldots, \xi_4$ are independent,
each with $[-1,1]$-uniform distribution.
The probability distribution of $\xi$ was approximated 
by a discrete uniform distribution
supported on the grid points of a uniform mesh of $\Xi$ using 
$32$ grid points
in each direction, yielding a discrete distribution with 
$32^4=2^{20}$ 
scenarios. Samples for the SAA problems were generated from this
discrete distribution. The reference solution $u^* = u_{h^*,N^*}$
to \eqref{eq:saa:lqcp} was computed with  $h^* = 2^{-10}$ and
$N^* = (1/h^*)^2 = 2^{20}$.

The reference solutions
for $\sigma \in \{1,2\}$ are depicted in \Cref{subfig:refsol:oned}.
The solution to the nominal problem
\eqref{eq:nominalproblem} and those to 
the ``expected diffusion'' problems with $h = h^*$
and $\sigma \in \{1,2\}$ are depicted in 
\Cref{subfig:nomavg:oned}. 
The convergence rates depicted in \Cref{subfig:hN:oned} 
for $\sigma=1$ are close
to the theoretical rates provided by
\eqref{eq:expecatation_bound} and \eqref{eq:2021-02-15T18:52:21.461}.
For $\sigma \in \{1,2\}$, 
\Cref{subfig:errorcontribution:sigma1:oned,%
	subfig:errorcontribution:sigma2:oned} 
depict $48$ samples, and the empirical mean and Luxemburg norm of
\begin{align}
	\label{eq:gradientestimate}
	\norm[L^2(\domain)]{\nabla \erpobjhN(\quasiinter u^*)- \nabla \hat{\erpobj}_{h^*,N^*}(u^*)},
\end{align}
which approximates 
$\norm[L^2(\domain)]{\nabla \erpobjhN(\quasiinter u^*)- \nabla \erpobj( u^*)}$ in \eqref{eq:simpleerrorbound}. 
The convergence rates depicted in \Cref{subfig:errorcontribution:sigma1:oned,%
	subfig:errorcontribution:sigma2:oned}
are close to the expected rates.
Since the term in \eqref{eq:gradientestimate} does not depend
on the SAA solutions $\uhN$, it is computationally cheaper to
evaluate than $\norm[L^2(\domain)]{\uhN-u^*}$.

\begin{figure}[t]
	\centering
	\subfloat[Reference solutions.]{%
		\includegraphics[width=0.33\textwidth]
		{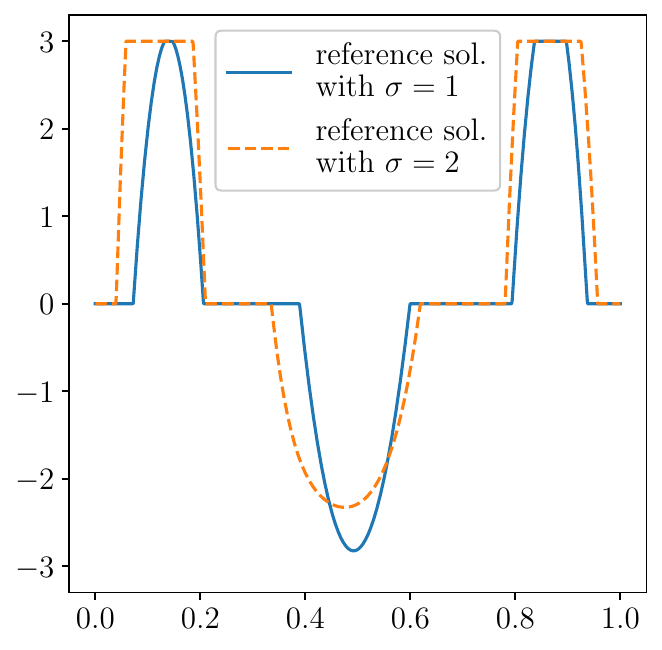}
		\label{subfig:refsol:oned}}
	\hfil
	\subfloat[Nominal and ``expected diffusion'' solutions.]{%
		\includegraphics[width=0.33\textwidth]
		{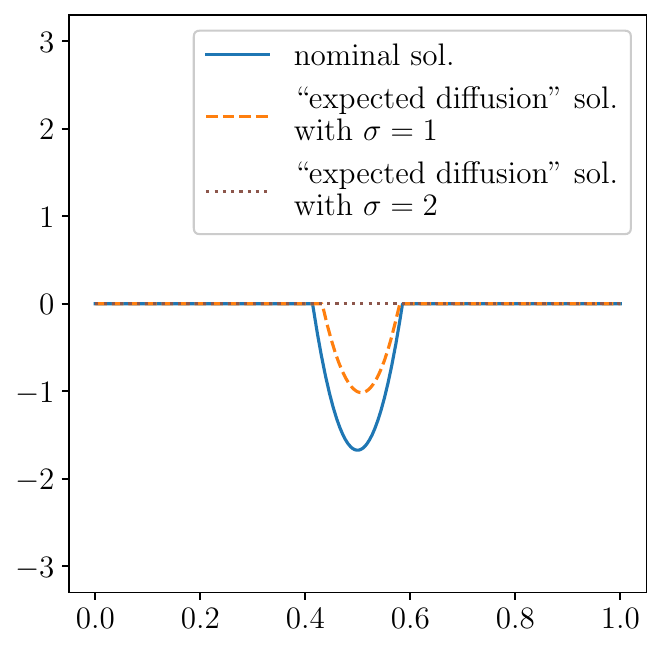}
		\label{subfig:nomavg:oned}}
	\caption{For the problem considered in \cref{subsect:1d},
		reference solutions $u^* = u^*_{h^*,N^*}$
		with $h^* = 2^{-10}$ and $N^* = 2^{20}$ \emph{(a)}, 
		solution to the nominal problem \eqref{eq:nominalproblem} 
		and solutions to the ``expected diffusion'' 
		problems \eqref{eq:avgproblem} with $h = h^*$ \emph{(b)}.
		The parameter $\sigma$ refers to that used in 
		the random field \eqref{eq:kappa:1d}.
		The subfigures have the same axes limits.}
\end{figure}

\begin{figure}[t]
	\centering
	\subfloat[Control errors for $\sigma=1$.]{%
		\includegraphics[width=0.325\textwidth]
		{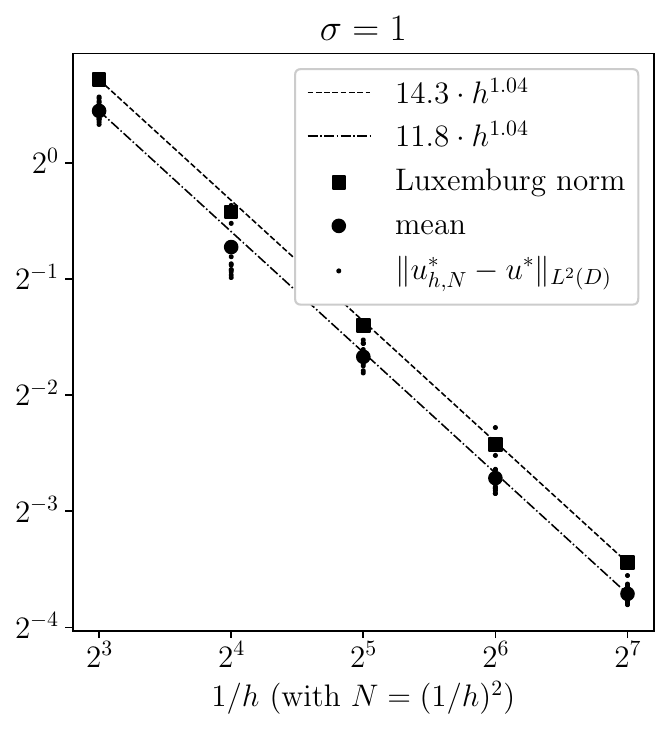}
		\label{subfig:hN:oned}}
	\hfil 
	\subfloat[Gradient errors for $\sigma=1$.]{%
		\includegraphics[width=0.325\textwidth]
		{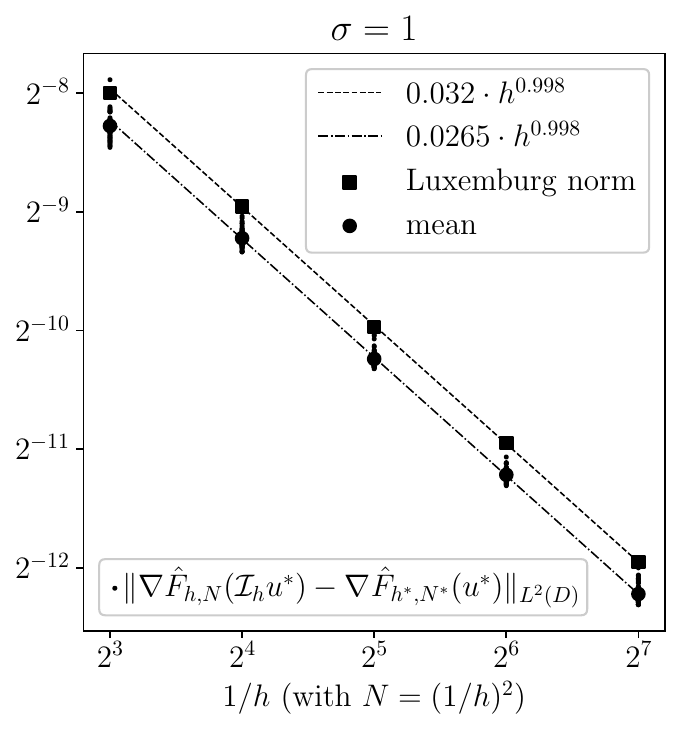}
		\label{subfig:errorcontribution:sigma1:oned}}
	\hfil 
	\subfloat[Gradient errors for $\sigma=2$.]{%
		\includegraphics[width=0.325\textwidth]
		{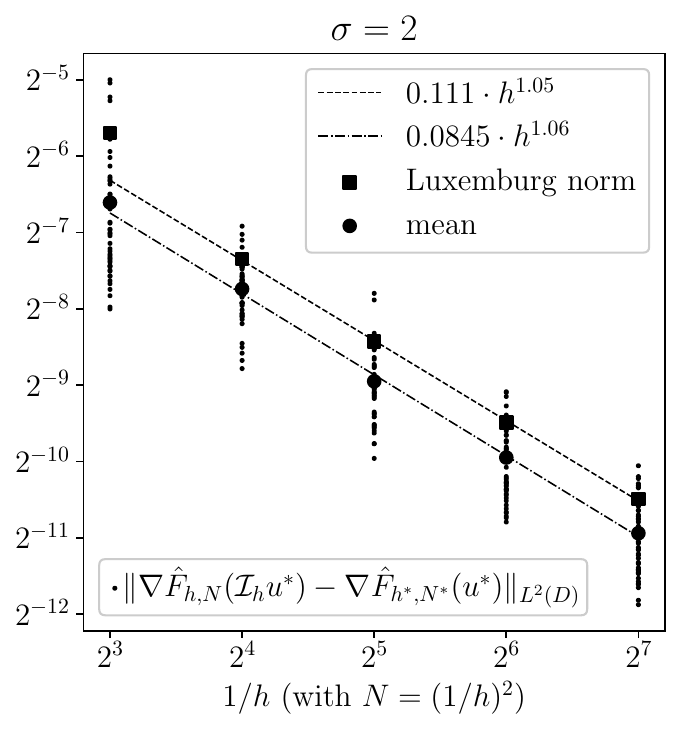}
		\label{subfig:errorcontribution:sigma2:oned}}
	\caption{For the problem considered in \cref{subsect:1d},
		realizations of the error $\norm[L^2(\domain)]{\uhN-u^*}$
		and their empirical  mean and Luxemburg norm
		as a function of the inverse mesh width $1/h$
		with $N  = (1/h)^2$ and $\sigma=1$ \emph{(a)}, 
		and realizations of 
		$\norm[L^2(\domain)]{\nabla \erpobjhN(\quasiinter u^*)- \nabla \hat{\erpobj}_{h^*,N^*}(u^*)}$
		and their empirical  mean and Luxemburg norm
		as a function of the inverse mesh width $1/h$
		with $N  = (1/h)^2$ and $\sigma=1$ \emph{(b)}
		and $\sigma=2$ \emph{(c)}.
		Here $u^* = u^*_{h^*,N^*}$
		with $h^* = 2^{-10}$ and $N^* = 2^{20}$
		is the reference solution, and
		$\nabla \hat{\erpobj}_{h^*,N^*}(u^*)$ is the reference
		gradient.
		For subfigures \textnormal{(b)} and \textnormal{(c)}, the
		first empirical estimates were excluded from the computation
		of the least squares fits. 
		The parameter $\sigma$ refers to that used in 
		the random field \eqref{eq:kappa:1d}.} 
\end{figure}

\subsection{Two-dimensional equation}
\label{subsect:2d}
We consider an instance of problem \eqref{eq:saa:lqcp} with $d=2$,
$\alpha =  0{.}001$, 
$\gamma = 0{.}01$,
$\lb = -6$,
$\ub = 6$,
$y_d(x) = -1$ if $x \in  (1/4, 3/4)^2$
and $y_d(x) = 1$ otherwise
(cf.\ \cite[p.\ 49]{Vallejos2008}).
We chose $\Xi = [-1,1]^4$ and 
the random diffusion coefficient
(cf.\ \cite[eq.\ (8.1)]{Martin2021})
\begin{align*}
	\kappa(\xi)(x) = 
	\exp\big(\xi_1 \cos(1.1\pi x_1)
	+\xi_2 \cos(1.2\pi x_1)
	+\xi_3 \sin(1.3\pi x_2)
	+\xi_4 \sin(1.4\pi x_2)\big).
\end{align*}
The random variables $\xi_1, \ldots, \xi_4$ are independent,
each with $[-1,1]$-uniform distribution.
The small number of random variables allows us to 
obtain an accurate reference solution to \eqref{eq:saa:lqcp}.
For our setting,
\Cref{ass:domainkappa,ass:2020-11-08T15:31:39.527,%
	ass:csph,ass:2020-01-31T16:12:16.968} hold true.
The probability distribution of $\xi$ was approximated 
by a discrete uniform distribution
supported on the grid points of a uniform mesh of $\Xi$ using $12$ grid points
in each direction, yielding a discrete distribution with 
$144^2=20736$ scenarios. Samples for the SAA problems were generated from this
discrete distribution. The reference solution $u^* = u_{h^*, N^*}$
to \eqref{eq:saa:lqcp} was computed with  $h^* = 1/144$ 
and $N^* = 144^2$.

\Cref{fig:solutions:twod} depicts the reference solution,
the solution to the nominal problem
\eqref{eq:nominalproblem} and that to 
the ``expected diffusion'' problem \eqref{eq:avgproblem} 
with $h = h^*$. 		
The controls depicted in \Cref{subfig:nomsol,subfig:avgsol}
differ from the reference solution depicted in
\Cref{subfig:refsol}.

\Cref{subfig:hN} illustrates the theoretical bounds
\eqref{eq:expecatation_bound} and \eqref{eq:2021-02-15T18:52:21.461}
empirically. 
The convergence rates shown in \Cref{fig:hN}
were computed using least squares. 
We used the mesh widths $h \in \{1/8, 1/12, 1/16, 1/24, 1/36, 1/48, 1/72\}$
with corresponding sample sizes $N \in \{8^2, 12^2,\ldots, 72^2\}$
to perform the simulations depicted in 
\Cref{subfig:hN}. While these mesh widths are not equidistant
on a binary logarithmic scale, they are multiplies of $1/144$, which facilitates
the computation of the errors $\norm[L^2(\domain)]{\uhN-u^*}$.
The convergence rates depicted in \Cref{subfig:hN} are close to the
theoretical rates. For a fixed mesh width $h$, 
\Cref{subfig:h} depicts the error's empirical Luxemburg norm
and mean as a function of the sample size $N$. Moreover, 
for a fixed  sample size $N$,
\Cref{subfig:N} shows the  error's Luxemburg norm
and mean  as a function of the inverse mesh width $1/h$.
The simulation output depicted in \Cref{fig:hN} may suggest that
small errors cannot be obtained by keeping either the mesh width
or the sample size fixed.

\begin{figure}[t]
	\centering
	\subfloat[Reference solution.]{%
		\includegraphics[width=0.325\textwidth]
		{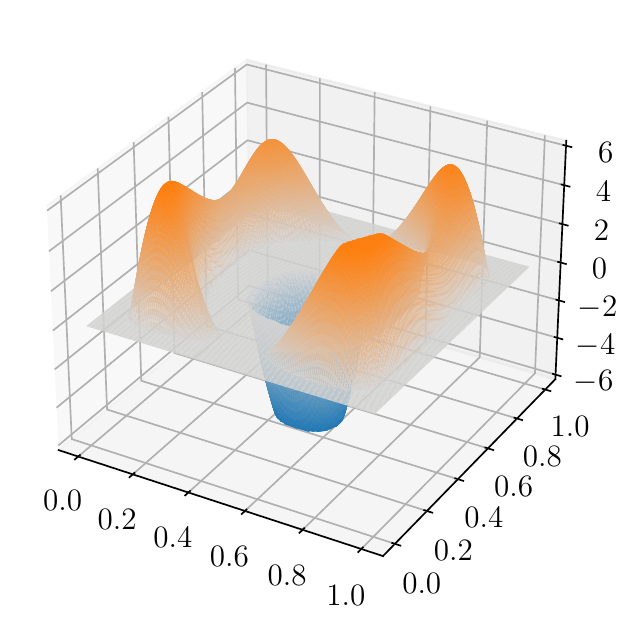}
		\label{subfig:refsol}}
	\hfil 
	\subfloat[Nominal solution.]{%
		\includegraphics[width=0.325\textwidth]
		{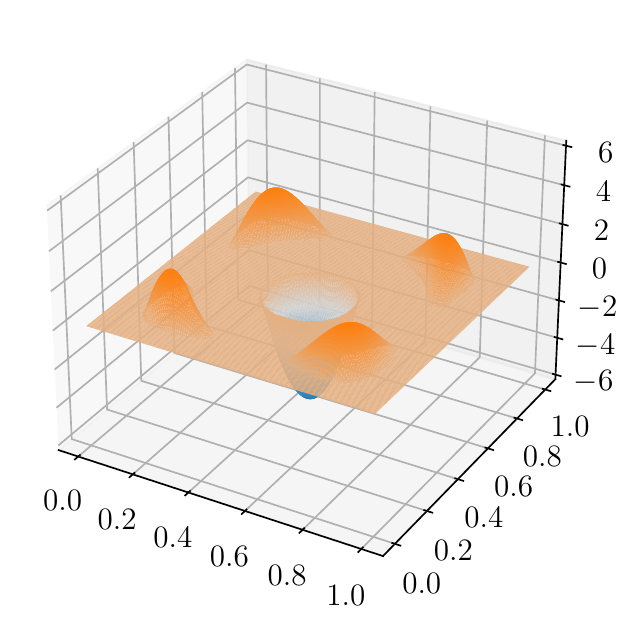}
		\label{subfig:nomsol}}
	\hfil 
	\subfloat[``Expected diffusion'' solution.]{%
		\includegraphics[width=0.325\textwidth]
		{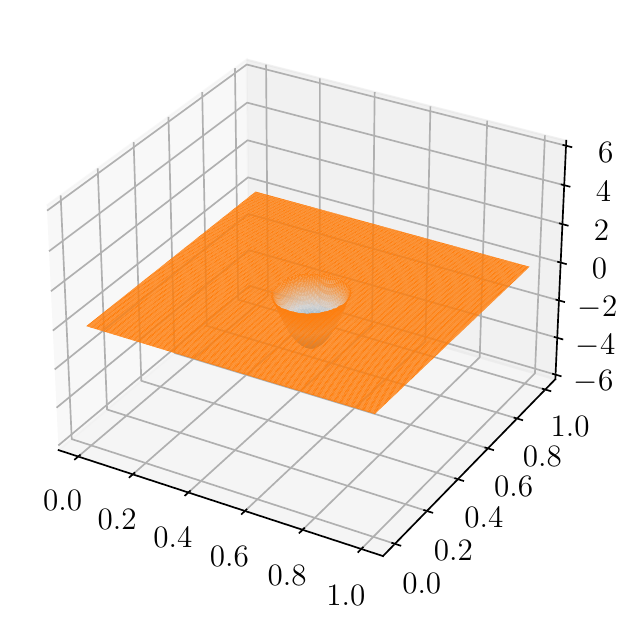}
		\label{subfig:avgsol}}
	\caption{For the problem  considered in \cref{subsect:2d},
		the reference solution $u^* = u^*_{h^*,N^*}$
		with $h^* = 1/144$ and $N^* = 144^2$
		\textnormal{(a)},
		solution to the nominal problem \eqref{eq:nominalproblem}
		with $h = h^*$
		\textnormal{(b)},
		and solution to 
		the ``expected diffusion'' problem \eqref{eq:avgproblem}
		with $h = h^*$
		\textnormal{(c)}. The subfigures have the same axes limits.}
	\label{fig:solutions:twod}
\end{figure}

\begin{figure}[t]
	\centering
	\subfloat[Variable $h$ with $N=(1/h)^2$.]{%
		\includegraphics[width=0.33\textwidth]
		{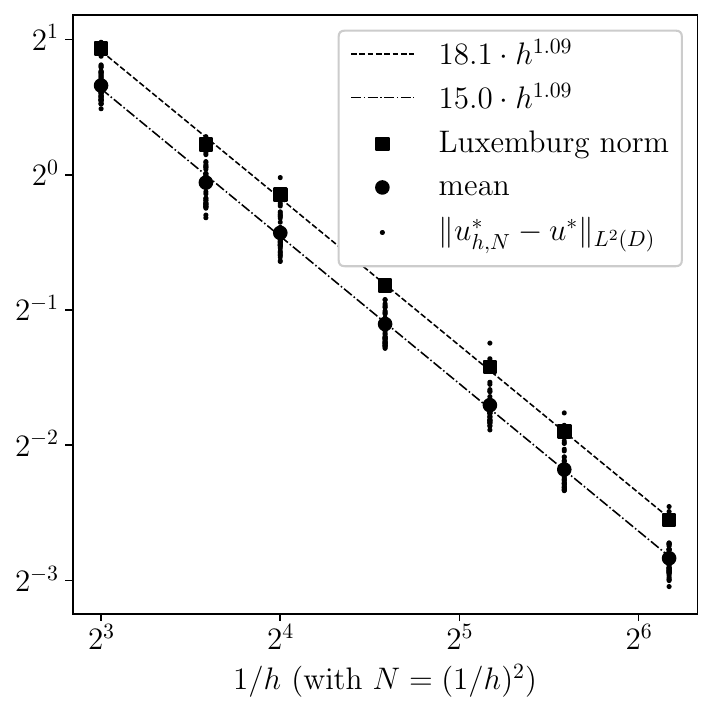}
		\label{subfig:hN}}
	\hfil 
	\subfloat[Fixed mesh width $h$.]{%
		\includegraphics[width=0.33\textwidth]
		{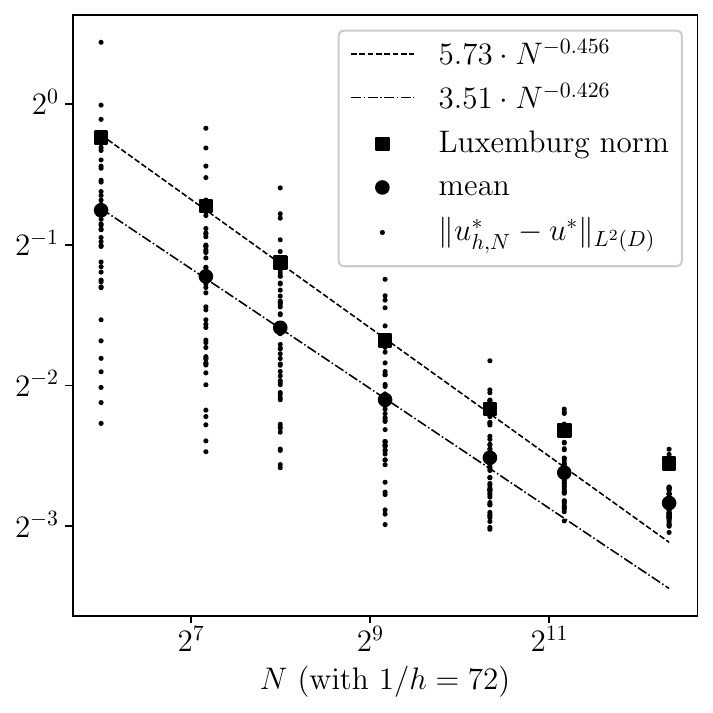}\label{subfig:h}}
	\hfil 
	\subfloat[Fixed sample size $N$.]{%
		\includegraphics[width=0.33\textwidth]
		{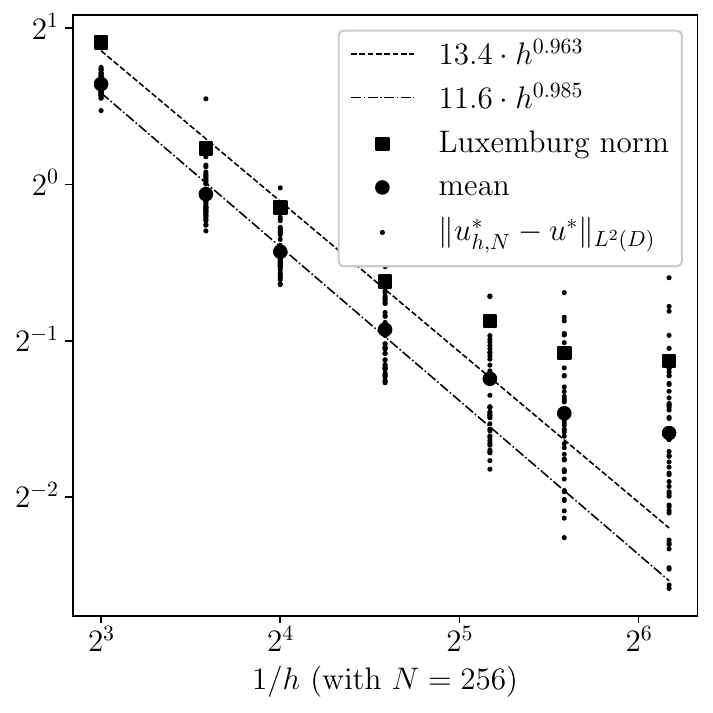}\label{subfig:N}}
	\caption{For the problem  considered in \cref{subsect:2d},
		realizations of the error $\norm[L^2(\domain)]{\uhN-u^*}$
		and their empirical  mean and Luxemburg norm as a function
		of the inverse mesh width $1/h$
		with sample size $N  = (1/h)^2$ \textnormal{(a)}, 
		of the sample size for fixed
		mesh width \textnormal{(b)}, 
		and of the inverse mesh width for fixed sample
		size \textnormal{(c)}.
		Here $u^* = u_{h^*,N^*}^*$ is the reference solution
		with $h^* = 1/144$ and $N^* = 144^2$. 
		For subfigures \textnormal{(b)} and \textnormal{(c)}, the 
		last three empirical estimates were excluded
		from the computation of the least squares fit.}
	\label{fig:hN}
\end{figure}

\section{Discussion}
\label{sec:discussion}

The numerical solution of risk-neutral optimal control problems
governed by PDEs with random inputs require both approximations of expected
values and space discretizations.
We have applied the SAA method and a Galerkin-type approximation
to a risk-neutral 
linear elliptic PDE-constrained optimization problem and derived
an exponential tail bound for the distance between the solution to 
the finite dimensional SAA problem and that to the risk-neutral program. 
The exponential tail bound implies that SAA solutions are close to the
risk-neutral problem's solution with high probability. The bound can be used
to balance errors caused
by space discretizations and Monte Carlo sample-based approximations.
Our main tool for deriving the exponential tail bound is a large deviation-type
bound derived in \cite{Pinelis1992,Pinelis1994}.
We performed numerical simulations illustrating our theoretical error bound.

We have derived reliable  error estimates
for a basic model problem with $L^1(\domain)$-norm 
control regularization.
Our derivation of the reliable error estimate
\eqref{eq:2021-02-22T15:31:07.382} relies on \Cref{ass:domainkappa_2},
which is violated for lognormal random fields. For a lognormal fields, 
we conjecture that an exponential tail bound similar to that 
in \eqref{eq:2021-02-22T15:31:07.382} does not hold. 
However, even for lognormal  fields, 
the estimate \eqref{eq:saa:2020-11-07T23:12:24.196} can be used
to establish bounds on $\cE{\norm[L^2(\domain)]{\uhN-u^*}^2}$
in terms of the sample size $N$ and discretization parameter $h$.
Our results may be extended in several ways. 
For example, rather than discretizing the control space using finite element
functions, the variational discretization approach
\cite{Hinze2005a} can be considered. 
In order to numerically solve the discretized 
SAA problems, we implicitly assumed that 
bilinear forms in
the discretized PDEs can be computed exactly. However, 
without structural assumptions on $\rdc$, 
the random diffusion coefficient $\rdc$ must be
approximated using, for example, projection or interpolation.
Furthermore, our error analysis exploits
that realizations of the random diffusion coefficient are continuously
differentiable. The error analysis may be extended to allow for 
rougher random diffusion coefficients, but the convergence
with respect to the space discretization may then be slower.  Moreover, 
our error analysis relies on the linearity
of the  elliptic PDE with respect to the control variable.
Since many PDE-constrained optimization problems under uncertainty
are governed by nonlinear PDEs, it would be desirable to 
have available reliable
error estimates for nonlinear optimal control problems
under uncertainty.

\appendix

\section{Tail bound implies expectation bound}
\label{sec:expectation_bound}
We show that
the exponential tail bound
\eqref{eq:2021-02-22T15:31:07.382} implies 
the expectation bound \eqref{eq:expecatation_bound}.
Using \cite[Lem.\ 3.4]{Kallenberg2002}, we have
$
\cE{\norm[L^2(\domain)]{\uhN-u^*}}
= \int_{0}^{\infty}\Prob{\norm[L^2(\domain)]{\uhN-u^*} \geq r} \du r
$.
Combined with \eqref{eq:2021-02-22T15:31:07.382},
$h$, $c_1$, $c_2 > 0$
and 
$2\int_{0}^\infty \exp(-x^2/(2\sigma^2)) \du 
x = \sqrt{2\pi \sigma^2}$
with $\sigma^2 = c_2^2/N > 0$, we find that
\begin{align*}
	\cE{\norm[L^2(\domain)]{\uhN-u^*}}
	&\leq 
	c_1h 
	+\int_{0}^{\infty}\Prob{\norm[L^2(\domain)]{\uhN-u^*} \geq c_1h+
		r} \du r
	\\
	& \leq  
	c_1h 
	+ \int_{0}^\infty 2\exp(-r^2N/(2c_2^2))  \du r
	=
	c_1 h + c_2\sqrt{2\pi}/\sqrt{N}.
\end{align*}
Hence the exponential tail bound
\eqref{eq:2021-02-22T15:31:07.382} implies 
the expectation bound \eqref{eq:expecatation_bound}.
\section{Tail bound implies bound on Luxemburg norm}
\label{sec:tailluxemburg}
We show that the exponential tail bound \eqref{eq:2021-02-22T15:31:07.382}
implies the Luxemburg norm bound
provided in  \eqref{eq:2021-02-15T18:52:21.461}
and as a consequence we obtain bounds on all finite moments
of $\norm[L^2(\domain)]{\uhN-u^*}$.
Rewriting \eqref{eq:2021-02-22T15:31:07.382}
and defining $\rv = \norm[L^2(\domain)]{\uhN-u^*} $,  we have
\begin{align}
	\label{eq:2021-02-22T15:31:07.382_2}
	\Prob{|\rv|\geq c_1 h + (c_2/
		\sqrt{N}) \varepsilon} \leq 
	2\exp(-\varepsilon^2/2) 
	\quad \tfa \quad \varepsilon > 0.
\end{align}
We have $h$, $c_1$, $c_2 > 0$.
If $\varepsilon \geq 1$, 
then 
$|\rv| \geq c_1 h \varepsilon + (c_2/
\sqrt{N}) \varepsilon$
yields
$|\rv| \geq c_1 h  + (c_2/
\sqrt{N}) \varepsilon$.
We also have  $2\exp(-\varepsilon^2/2) > 1$
for all $\varepsilon \in [0, 1]$. Hence
\eqref{eq:2021-02-22T15:31:07.382_2} implies
\begin{align*}
	\Prob[\big]{|\rv| \geq 
		\big(c_1 h  + (c_2/
		\sqrt{N})\big) \varepsilon} \leq 
	2\exp(-\varepsilon^2/2) 
	\quad \tfa \quad \varepsilon > 0.
\end{align*}
Combined with
\cite[Thm.\ 3.4 on p.\ 56]{Buldygin2000}
(used with $\varphi(x) = x^2$ \cite[pp.\ 42 and 55]{Buldygin2000},
$C = 2$ and $D = \sqrt{2}\big(c_1 h  + (c_2/
\sqrt{N})\big)$) and eq.\ \eqref{eq:mlmc:2020-01-13T12:07:32.995}, 
we obtain the bound
\begin{align*}
	\luxemburg[L^2(\domain)]{2}{\uhN-u^*}
	= \luxemburg[\real]{2}{\rv}
	\leq  3\sqrt{2}(c_1 h  + (c_2/\sqrt{N})).
\end{align*}
Hence the exponential tail bound \eqref{eq:2021-02-22T15:31:07.382}
yields the Luxemburg norm bound in \eqref{eq:2021-02-15T18:52:21.461}.
Using \cite[Lem.\ 3.4 on p.\ 58]{Buldygin2000}, we further have
$\cE{\exp(\lambda |Z|)} \leq 2 \exp(\lambda^2 \varrho^2/4)$
for all $\lambda \in \real$, where $\varrho = 3\sqrt{2}\big(c_1 h  + (c_2/
\sqrt{N})\big)$. Combined with the computations in 
\cite[p.\ 7]{Buldygin2000} (used with $\tau=\varrho/\sqrt{2}$), 
\begin{align}
	\label{eq:new_expecatation_bound}
	\cE{\norm[L^2(\domain)]{\uhN-u^*}^p}
	\leq 3^p2(p/\mathrm{e})^{p/2}
	\big(c_1 h  + (c_2/\sqrt{N})\big)^p
	\quad \tfa \quad p > 0.
\end{align}
\section{Higher regularity of a proximity operator}
We establish a higher regularity result of a certain proximity 
operator, which is essentially known 
\cite{Borzi2005,Casas2012a,Falk1973,Geveci1979,%
	Malanowski1982,Troeltzsch2010a,Wachsmuth2011}.
It is used in \Cref{lem:saa:H1reg} to establish
higher regularity of the solution to \eqref{eq:saa:lqcp}.
For $\wlb$, $\wub \in L^2(\domain)$, we define $[\wlb,\wub] 
= \{\, u \in L^2(\domain) :\, \wlb \leq u \leq \wub  \,\, 
\text{a.e. in} \,\, \domain \,\}$.
\begin{lemma}
	\label{lem:Feb0420211008}
	Let $\domain \subset \real^d$
	be a bounded Lipschitz domain, and let 
	$\wlb$, $\wub \in H^1(\domain) \cap L^\infty(\domain)$
	fulfill $\wlb \leq \wub$ a.e.\ in $\domain$.
	We define $\psi : L^2(\domain) \to [0,\infty]$
	by 	$\psi(v) = \mu \norm[L^1(\domain)]{v} + I_{[\wlb,\wub]}(v)$
	with $\mu \in [0,\infty)$.
	Let $\prox{\psi}{} : L^2(\domain) \to L^2(\domain)$
	be the proximity operator of $\psi$ as defined in \eqref{eq:prox}. 
	Then $\prox{\psi}{v} \in H^1(\domain)$
	for all $	v \in H^1(\domain)$ and
	$$\norm[H^1(\domain)]{\prox{\psi}{v}} 
	\leq 
	\norm[H^1(\domain)]{\wlb} 
	+ \norm[H^1(\domain)]{\wub}
	+ \eqeqnorm[H^1(\domain)]{v}
	\quad \tfa \quad 
	v \in H^1(\domain).
	$$
\end{lemma}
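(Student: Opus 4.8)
The plan is to reduce $\prox{\psi}{v}$ to an explicit, pointwise composition of soft-thresholding and projection onto the box $[\wlb,\wub]$, and then to exploit the lattice structure of $H^1(\domain)$ to control the gradient region by region. First I would observe that, since $\psi$ consists of an integral functional of the pointwise modulus together with a pointwise box constraint, the minimization defining $\prox{\psi}{v}$ separates over $\domain$: for a.e.\ $x$ the strictly convex scalar problem $\min_{\wlb(x) \le w \le \wub(x)} \mu|w| + \tfrac{1}{2}(v(x)-w)^2$ is solved by projecting the unconstrained minimizer $\operatorname{soft}_\mu(v(x)) = \operatorname{sign}(v(x))\max(|v(x)|-\mu,0)$ onto $[\wlb(x),\wub(x)]$. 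Writing $s = \operatorname{soft}_\mu(v)$, this yields the closed form $\prox{\psi}{v} = \max(\wlb,\min(\wub,s))$ a.e. Using the identity $\operatorname{soft}_\mu(v) = \max(v-\mu,0)+\min(v+\mu,0)$ and the fact that $\domain$ is bounded, so that constants lie in $H^1(\domain)$, the lattice property of Sobolev functions gives $s \in H^1(\domain)$, and applying $\max$ and $\min$ once more shows $\prox{\psi}{v} \in H^1(\domain)$.

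For the quantitative estimate I would partition $\domain$ into the three measurable sets $\{s \le \wlb\}$, $\{\wlb < s < \wub\}$ and $\{s \ge \wub\}$, on which $p := \prox{\psi}{v}$ equals $\wlb$, $s$ and $\wub$, respectively. The Stampacchia-type chain rule for $\max$ and $\min$ of $H^1$-functions then gives $\nabla p = \nabla \wlb$, $\nabla s$, $\nabla \wub$ on these disjoint regions, the gradients coinciding a.e.\ on the interfaces $\{s = \wlb\}$ and $\{s = \wub\}$. Since $\nabla s = \nabla v\,\mathbf{1}_{\{|v|>\mu\}}$, we have $|\nabla s| \le |\nabla v|$ a.e.; integrating $|\nabla p|^2$ over the three regions and enlarging each domain of integration to all of $\domain$ then gives $\eqeqnorm[H^1(\domain)]{p}^2 \le \eqeqnorm[H^1(\domain)]{\wlb}^2 + \eqeqnorm[H^1(\domain)]{v}^2 + \eqeqnorm[H^1(\domain)]{\wub}^2$. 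For the $L^2$-part I would use only the box constraint $\wlb \le p \le \wub$, whence $|p| \le \max(|\wlb|,|\wub|)$ pointwise and $\norm[L^2(\domain)]{p}^2 \le \norm[L^2(\domain)]{\wlb}^2 + \norm[L^2(\domain)]{\wub}^2$. Adding the two inequalities and regrouping so that $\norm[L^2(\domain)]{\wlb}^2 + \eqeqnorm[H^1(\domain)]{\wlb}^2 = \norm[H^1(\domain)]{\wlb}^2$ (and likewise for $\wub$), then using $\sqrt{A^2+B^2+C^2} \le A+B+C$ for $A,B,C \ge 0$, produces the claimed bound.

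I expect the main obstacle to be the careful justification of the gradient identities on the truncation regions, namely the Sobolev chain/truncation rule together with the fact that $\nabla a = \nabla b$ a.e.\ on $\{a = b\}$ for $a,b \in H^1(\domain)$; this is exactly what ensures that the interfaces $\{s = \wlb\}$ and $\{s = \wub\}$ cause no ambiguity in $\nabla p$. As this is the regularity result described as ``essentially known'' in the references preceding the lemma, I would invoke it rather than reprove it. A secondary, more routine issue is the rigorous passage from the scalar pointwise minimizer to the closed form for $\prox{\psi}{v}$, which follows from the separability of the integral functional and the strict convexity of the scalar objective.
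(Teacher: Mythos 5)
Your proposal is correct and follows essentially the same route as the paper: reduce the prox to a pointwise scalar problem, write it as the box projection of the soft-thresholded input (the paper phrases this as $\prox{I_{[\wlb,\wub]}}{v-\prox{I_{[-\mu,\mu]}}{v}}$, which is the same map), then use Sobolev truncation/lattice properties region by region for the seminorm bound and the box constraint for the $L^2$ bound. The only cosmetic differences are that the paper justifies the pointwise reduction via decomposability of $L^2(\domain)$ and Rockafellar's interchange-of-minimization-and-integration theorem, and cites a known scalar prox-composition identity rather than deriving the soft-threshold-then-project form directly.
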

We prove \Cref{lem:Feb0420211008} using \Cref{lem:Feb0420211008_2}.
If $\wlb$, $\wub \in L^2(\domain)$ with $a\leq b$ a.e.\ in $\domain$, then
$\prox{I_{[\wlb,\wub]}}{}$ equals the
projection operator onto $[\wlb,\wub]$.
\begin{lemma}
	\label{lem:Feb0420211008_2}
	Let hypotheses of \Cref{lem:Feb0420211008} hold, 
	and let $v$, $w \in H^1(\domain)$. Then
	\begin{enumerate}[nosep,wide,leftmargin=*,before={\parskip=0pt}]
		\item 
		$\prox{I_{[\wlb,\wub]}}{w} \in H^1(\domain)$
		and
		$
		\eqeqnorm[H^1(\domain)]{\prox{I_{[\wlb,\wub]}}{w}}^2 
		\leq 
		\eqeqnorm[H^1(\domain)]{\wlb}^2
		+ \eqeqnorm[H^1(\domain)]{\wub}^2
		+ \eqeqnorm[H^1(\domain)]{w}^2
		$.
		\item 
		$v-\prox{I_{[-\mu,\mu]}}{v} \in H^1(\domain)$
		and
		$\eqeqnorm[H^1(\domain)]{v-\prox{I_{[-\mu,\mu]}}{v}}
		\leq \eqeqnorm[H^1(\domain)]{v}$.
	\end{enumerate}
\end{lemma}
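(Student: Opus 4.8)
The plan is to exploit the lattice (truncation) structure of $H^1(\domain)$ together with the standard Stampacchia identity for the weak gradient of a pointwise maximum or minimum. Since the proximity operator of an indicator function is the metric $L^2(\domain)$-projection, $\prox{I_{[\wlb,\wub]}}{}$ is the pointwise projection onto the interval $[\wlb(x),\wub(x)]$, that is, the median $\proj{[\wlb,\wub]}{w} = \max\{\wlb, \min\{\wub, w\}\}$, and $\prox{I_{[-\mu,\mu]}}{}$ is the pointwise truncation $\max\{-\mu, \min\{\mu, v\}\}$ (the constants $\pm\mu$ lie in $H^1(\domain)$ because $\domain$ is bounded). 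The tool I would invoke is the truncation lemma: for $f$, $g \in H^1(\domain)$, both $\max\{f,g\}$ and $\min\{f,g\}$ belong to $H^1(\domain)$, with $\nabla \max\{f,g\} = \nabla f$ a.e.\ on $\{f \geq g\}$ and $= \nabla g$ a.e.\ on $\{f < g\}$, and analogously for $\min$. Its companion fact is that $\nabla f = \nabla g$ a.e.\ on the contact set $\{f = g\}$, which makes these formulas unambiguous on overlaps.

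For part (a) I would write $P := \proj{[\wlb,\wub]}{w} = \max\{\wlb, \min\{\wub, w\}\}$ and apply the truncation lemma twice to conclude $P \in H^1(\domain)$. I would then split $\domain$, up to a null set, into the three sets $A = \{w \leq \wlb\}$, $W = \{\wlb < w < \wub\}$, and $B = \{w \geq \wub\}$, on which $P$ equals $\wlb$, $w$, and $\wub$ respectively; the gradient identity gives $\nabla P = \nabla \wlb\,\mathbf{1}_A + \nabla w\,\mathbf{1}_W + \nabla \wub\,\mathbf{1}_B$ a.e.\ (the contact-set fact handling the boundaries $\{w=\wlb\}$ and $\{w=\wub\}$). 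Since $A$, $W$, $B$ are pairwise disjoint, pointwise a.e.\ $|\nabla P|^2 \leq |\nabla \wlb|^2 + |\nabla w|^2 + |\nabla \wub|^2$, and integrating over $\domain$ yields $\eqeqnorm[H^1(\domain)]{P}^2 \leq \eqeqnorm[H^1(\domain)]{\wlb}^2 + \eqeqnorm[H^1(\domain)]{\wub}^2 + \eqeqnorm[H^1(\domain)]{w}^2$, which is the asserted estimate.

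Part (b) is the same argument with $\wlb$, $\wub$ replaced by the constants $-\mu$, $\mu$, whose gradients vanish. Setting $S := v - \proj{[-\mu,\mu]}{v}$, I would observe that $S = v - \mu$ on $\{v > \mu\}$, $S = 0$ on $\{|v| \leq \mu\}$, and $S = v + \mu$ on $\{v < -\mu\}$; hence $S \in H^1(\domain)$ (as a difference of $H^1(\domain)$ functions) and $\nabla S = \nabla v\,\mathbf{1}_{\{|v| > \mu\}}$ a.e. Consequently $|\nabla S| \leq |\nabla v|$ pointwise a.e., and integration gives $\eqeqnorm[H^1(\domain)]{v-\proj{[-\mu,\mu]}{v}} \leq \eqeqnorm[H^1(\domain)]{v}$.

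The only genuine subtlety, and the main obstacle, is the rigorous justification of the a.e.\ gradient identities and the contact-set fact $\nabla f = \nabla g$ a.e.\ on $\{f = g\}$; once these Stampacchia-type results are in hand — and they are valid on any bounded Lipschitz domain — the pairwise disjointness of the truncation regions makes the seminorm bounds immediate, with no cross terms to control. I would therefore cite a standard reference for the truncation lemma rather than reprove it, and devote the write-up to the bookkeeping of the three regions in part (a).
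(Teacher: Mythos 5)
Your proof is correct and takes essentially the same route as the paper's: the paper likewise identifies the proximity operators with the pointwise projections onto $[\wlb,\wub]$ and $[-\mu,\mu]$, cites a Stampacchia-type truncation result (Tr\"oltzsch, pp.\ 114--115) for membership in $H^1(\domain)$, and obtains both seminorm bounds from the same three-region case decomposition. The only difference is that you spell out the a.e.\ gradient identities and the disjointness of the regions explicitly, which the paper leaves implicit.
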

\begin{proof}
	In this
	proof, we omit writing evaluations at $x \in \domain$.
	
	\textnormal{(a)} We have $\prox{I_{[\wlb,\wub]}}{w} \in H^1(\domain)$
	\cite[pp.\ 114--115]{Troeltzsch2010a}.
	Since
	$\prox{I_{[\wlb,\wub]}}{w}  = \wlb$
	if $w < \wlb$, 
	$\prox{I_{[\wlb,\wub]}}{w}  = w$
	if $w \in [\wlb, \wub]$, and
	$\prox{I_{[\wlb,\wub]}}{w}  = \wub$ otherwise,
	we obtain the estimate.
	
	\textnormal{(b)}
	We have
	$\prox{I_{[-\mu,\mu]}}{v} \in H^1(\domain)$
	\cite[pp.\ 114--115]{Troeltzsch2010a}
	and hence
	$v-\prox{I_{[-\mu,\mu]}}{v} \in H^1(\domain)$.
	Combined with
	$v-\prox{I_{[-\mu,\mu]}}{v} = v+\mu$
	if $v < -\mu$, 
	$v-\prox{I_{[-\mu,\mu]}}{v} = 0$
	if $v \in [-\mu,\mu]$, and
	$v-\prox{I_{[-\mu,\mu]}}{v} = v-\mu$
	otherwise, we obtain the  estimate.
\end{proof}

\begin{proof}[{Proof of \Cref{lem:Feb0420211008}}]
	The set $[\wlb,\wub] \subset L^2(\domain)$ is nonempty, convex and closed
	\cite[pp.\ 116--117]{Troeltzsch2010a}.
	Hence the indicator function $I_{[\wlb,\wub]}$ is proper, convex and 
	lower semicontinuous \cite[Ex.\ 2.115]{Bonnans2013}.
	Combined with the boundedness of $\domain$
	and H\"older's inequality, we find that $v \in L^1(\domain)$
	and
	$\norm[L^1(\domain)]{v} \leq 
	\norm[L^1(\domain)]{1}^{1/2} \norm[L^2(\domain)]{v} < \infty$
	for each $v \in L^2(\domain)$.
	Hence $\psi$ is  proper, convex and 
	lower semicontinuous.
	Fix $v \in H^1(\domain)$. 
	Since $L^2(\domain)$ is decomposable
	\cite[p.\ 677]{Rockafellar2009}
	and $\prox{\psi}{v}$ is well-defined \cite[p.\ 211]{Bauschke2011},
	the theorem
	on the interchange of minimization and integration
	\cite[Thm.\ 14.60]{Rockafellar2009} ensures
	for almost every $x \in \domain$,
	\begin{align*}
		\prox{\psi}{v}(x) = 
		\prox{\mu |\cdot| + I_{[\wlb(x), \wub(x)]}}{v(x)}.
	\end{align*}
	Similarly, for each $w \in L^2(\domain)$, we obtain
	for almost every $x \in \domain$,
	\begin{align*}
		\prox{I_{[-\mu,\mu]}}{w}(x)
		= \prox{I_{[-\mu,\mu]}}{w(x)}
		\;\; \tand \;\;
		\prox{I_{[\wlb(x),\wub(x)]}}{w(x)}
		= \prox{I_{[\wlb(x),\wub(x)]}}{w}(x).
	\end{align*}
	Since 
	$\prox{\mu |\cdot| + I_{[t_1, t_2]}}{t_3}
	= \prox{I_{[t_1,t_2]}}{t_3-\prox{I_{[-\mu,\mu]}}{t_3}}
	$
	for all $t \in \real^3$ \cite[Ex.\ 3.2.9]{Milzarek2016},
	we have
	\begin{align}
		\label{eq:Feb0420210911}
		\prox{\psi}{v} = 
		\prox{I_{[\wlb,\wub]}}{v-\prox{I_{[-\mu,\mu]}}{v}}.
	\end{align}
	Combined with \Cref{lem:Feb0420211008_2}, it follows that
	$\prox{\psi}{v} \in H^1(\domain)$ and
	\begin{align}
		\label{eq:proxH1}
		\eqeqnorm[H^1(\domain)]{\prox{\psi}{v}}^2 
		\leq 
		\eqeqnorm[H^1(\domain)]{\wlb}^2
		+ \eqeqnorm[H^1(\domain)]{\wub}^2
		+ \eqeqnorm[H^1(\domain)]{v}^2.
	\end{align}
	Using \eqref{eq:Feb0420210911}, we have $\prox{\psi}{v} \in [\wlb,\wub]$.
	Since $\wlb(x) \leq \prox{\psi}{v}(x)\leq \wub(x)$ ensures 
	the estimate
	$|\prox{\psi}{v}(x)|^2 \le \wub(x)^2 + \wlb(x)^2$, we obtain
	$$
	\norm[L^2(\domain)]{\prox{\psi}{v}}^2 \leq 
	\norm[L^2(\domain)]{\wlb}^2 + \norm[L^2(\domain)]{\wub}^2.
	$$
	Combined with \eqref{eq:proxH1} and 
	$(\rho_1+\rho_2)^{1/2} \leq \rho_1^{1/2}+\rho_2^{1/2}$
	valid for all $\rho_1$, $\rho_2 \geq 0$, we obtain
	the stability estimate.
\end{proof}

\section*{Acknowledgments}
JM thanks the two anonymous referees and the associate editor
for their helpful comments and suggestions. 
JM thanks Dominik Hafemeyer for our discussion on empirically verifying
convergence rates and Gernot Holler  for encouraging the author to
make explicit the dependence on problem-dependent parameters in the reliable
error estimate.

\begin{footnotesize}
	\bibliography{femsaa_revised.bbl}
\end{footnotesize}

\end{document}